\documentclass[11pt,a4paper]{article}
\usepackage[margin=1in]{geometry}
\usepackage[T1]{fontenc}
\usepackage{lmodern}
\usepackage{amsmath,amssymb,amsthm}
\usepackage[numbers,sort&compress]{natbib}
\usepackage{algorithm}
\usepackage{algpseudocode}
\usepackage{bm}
\usepackage{xcolor}

\usepackage{graphicx}
\usepackage{xurl}
\usepackage{tikz}
\usetikzlibrary{arrows.meta,calc,fit,positioning}
\usepackage{enumitem}
\usepackage{needspace}
\usepackage[hidelinks,hypertexnames=false]{hyperref}
\theoremstyle{plain}
\newtheorem{theorem}{Theorem}
\newtheorem{lemma}{Lemma}
\newtheorem{proposition}{Proposition}
\newtheorem{corollary}{Corollary}
\newtheorem{assumption}{Assumption}
\theoremstyle{definition}
\newtheorem{definition}{Definition}
\newtheorem{example}{Example}
\theoremstyle{remark}
\newtheorem{remark}{Remark}

\newcommand{\R}{\mathbb{R}}
\newcommand{\T}{^{\mathsf T}}
\newcommand{\grad}{\nabla}
\newcommand{\proj}{\operatorname{proj}}
\newcommand{\argmin}{\operatorname*{argmin}}
\newcommand{\ext}{\operatorname{ext}}
\newcommand{\cI}{\mathcal I}
\newcommand{\cA}{\mathcal A}

\newcommand{\cC}{\mathcal C}

\newcommand{\cL}{\mathcal L}

\newcommand{\cP}{\mathcal P}
\newcommand{\cS}{\mathcal S}
\newcommand{\cOmega}{\Omega}

\newcommand{\LP}{\mathrm{LP}}
\newcommand{\RBLA}{\textnormal{\scshape RBLA}}
\newcommand{\RPSA}{\textnormal{\scshape RPSA}}
\newcommand{\TBTA}{\textnormal{\scshape TBTA}}
\newcommand{\LBTRS}{\textnormal{\scshape LBTRS}}
\newcommand{\pred}{\operatorname{pred}}
\newcommand{\ared}{\operatorname{ared}}
\newcommand{\dist}{\operatorname{dist}}
\newcommand{\runinhead}[1]{\par\smallskip\noindent #1\enspace}

\makeatletter
\newenvironment{breakablealgorithm}
{%
  \Needspace{8\baselineskip}%
  \par\addvspace{\intextsep}%
  \noindent\refstepcounter{algorithm}%
  \hrule height .8pt depth 0pt\kern2pt%
  \renewcommand{\caption}[2][\relax]{%
    \noindent\textbf{Algorithm~\thealgorithm}\enspace ##2\par%
    \ifx\relax##1\relax
      \addcontentsline{loa}{algorithm}{\protect\numberline{\thealgorithm}##2}%
    \else
      \addcontentsline{loa}{algorithm}{\protect\numberline{\thealgorithm}##1}%
    \fi
    \kern2pt\hrule\kern2pt%
  }%
}
{%
  \kern2pt\hrule\par\addvspace{\intextsep}%
}

\newcounter{subalgorithm}
\newenvironment{breakablesubalgorithm}
{%
  \Needspace{8\baselineskip}%
  \par\addvspace{\intextsep}%
  \noindent\refstepcounter{subalgorithm}%
  \hrule height .8pt depth 0pt\kern2pt%
  \renewcommand{\caption}[2][\relax]{%
    \noindent\textbf{Subalgorithm~\thesubalgorithm}\enspace ##2\par%
    \kern2pt\hrule\kern2pt%
  }%
}
{%
  \kern2pt\hrule\par\addvspace{\intextsep}%
}
\makeatother

\title{Randomized Branch Methods with Inexact Subproblems for Bouligand Stationarity in Linear and Quadratic Programs with Complementarity Constraints}
\author{%
Ziyin Hu\thanks{Department of Mathematics, Southern University of Science and
Technology, Shenzhen 518055, People's Republic of China.
Email: \href{mailto:12432016@mail.sustech.edu.cn}{\nolinkurl{12432016@mail.sustech.edu.cn}}.}
\and
Xin Liu\thanks{Institute of Computational Mathematics and Scientific/Engineering
Computing, Academy of Mathematics and Systems Science, Chinese Academy of
Sciences, University of Chinese Academy of Sciences, Beijing 100190,
People's Republic of China.
Email: \href{mailto:liuxin@lsec.cc.ac.cn}{\nolinkurl{liuxin@lsec.cc.ac.cn}}.}
\and
Shangzhi Zeng\thanks{National Center for Applied Mathematics Shenzhen, and
Department of Mathematics, Southern University of Science and Technology,
Shenzhen 518055, People's Republic of China.
Email: \href{mailto:zengsz@sustech.edu.cn}{\nolinkurl{zengsz@sustech.edu.cn}}.}
\and
Jin Zhang\thanks{Corresponding author. Department of Mathematics, and National
Center for Applied Mathematics Shenzhen, Southern University of Science and
Technology, Shenzhen 518055, People's Republic of China.
Email: \href{mailto:zhangj9@sustech.edu.cn}{\nolinkurl{zhangj9@sustech.edu.cn}}.}
}
\date{}

\begin{document}
\maketitle

\begin{abstract}

We develop randomized branch methods for finding Bouligand-stationary
(B-stationary) points of linear and quadratic programs with complementarity
constraints (LPCCs and QPCCs) using only linear programming subproblems.  The
methods exploit the finite-union geometry of the feasible set and search for
first-order descent on randomly selected compatible branches.  For LPCCs, an
exact method optimizes over sampled branches, while an inexact simplex method
can accept an improving branch-feasible vertex before solving the sampled
penalty problem to optimality.  For QPCCs, including problems with indefinite
quadratic objectives, branch quadratic programs are replaced by linearized
trust-region subproblems; a ratio test ensures actual decrease, and thresholded
sampling detects branches that emerge only at accumulation points.  Under the
stated assumptions, the LPCC methods stabilize after finitely many changes at
B-stationary points almost surely.  For the QPCC method, finite termination
yields a B-stationary point, and every accumulation point of an infinite run is
B-stationary almost surely.  Experiments on bilevel-induced instances,
instances arising from inverse quadratic programming, and sparse affine
generalized Nash equilibrium instances, together with 129 MacMPEC embedding
tests, show that the methods return points with competitive objective quality
and runtimes on large-scale complementarity systems.

\medskip
\noindent\textbf{Keywords:} Linear programs with complementarity constraints;
Quadratic programs with complementarity constraints;
Bouligand stationarity; Randomized active-set methods;
Inexact branch subproblems; Trust-region linearization.

\smallskip
\noindent\textbf{Mathematics Subject Classification:} 90C33.
\end{abstract}

\section{Introduction}

This paper studies linear and quadratic programs with complementarity constraints of the
form
\begin{equation}\tag{P}\label{prob:standard-lcc}
\begin{aligned}
    \min_{x\in\mathbb R^n}\quad & f(x)\\
    \text{s.t.}\quad
        & Ax-a=0,\\
        & Bx-b\ge0,\\
        & 0\le x_1\perp x_2\ge0,
\end{aligned}
\end{equation}
where \(x=(x_0,x_1,x_2)\in
\mathbb R^{n_0}\times\mathbb R^m\times\mathbb R^m\), the data have compatible
dimensions, and
\(0\le x_1\perp x_2\ge0\) imposes nonnegativity and componentwise
complementarity.  The objective is either linear, \(f(x)=c\T x\), or
quadratic, \(f(x)=\frac12x\T Qx+q\T x\), where \(Q=Q\T\) may be
indefinite; these cases are called LPCCs and QPCCs, respectively.  Affine
complementarity constraints can be put in this form by introducing explicit
complementarity variables.  Applications include bilevel and inverse
quadratic optimization, affine generalized Nash equilibria, and
complementarity-based optimal control
\cite{YuMitchellPang2019LPCCBC,JaraMoroniPangWachter2018LPCC,%
SchiroPangShanbhag2013,hall2022scp,AllendeStill2013}.  Related generalized
complementarity formulations arise in multimarginal optimal transport models
for strongly correlated electron systems \cite{HuChenLiu2023}, with recent
work exploiting their special multi-block structure for large-scale
computation \cite{HuLiLiuMeng2025}.

Each complementarity condition
$0\le x_{1,i}\perp x_{2,i}\ge0$ is the two-term disjunction
$(x_{1,i}=0,\ x_{2,i}\ge0)\vee(x_{1,i}\ge0,\ x_{2,i}=0)$.
Choosing one alternative for each of the $m$ pairs defines a polyhedral
branch after including the common linear constraints. Let $\cP$ index
these $2^m$ choices and let $\cOmega_J$ denote the branch associated with
$J\in\cP$. The feasible set is therefore
$\cOmega=\bigcup_{J\in\cP}\cOmega_J$, making
\eqref{prob:standard-lcc} a polyhedral disjunctive program
\cite{Balas1979Disjunctive,Vielma2019ConvexUnions,Mehlitz2020}.
Some labelled branches may be empty or identical. Its global optimal value
satisfies
$\inf_{x\in\cOmega}f(x)=\min_{J\in\cP}\inf_{x\in\cOmega_J}f(x)$,
where the infimum over an empty branch is $+\infty$.
Thus, certifying global optimality requires ruling out a better objective
value on every nonempty branch. Explicit enumeration does so by solving
each branch globally and comparing the optimal values, requiring up to
$2^m$ LPs or QPs
\cite{HuMitchellPangBennettKunapuli2008,bai2013convexqpcc,YuMitchellPang2019LPCCBC}.

In this paper, we consider algorithms for finding local stationary points,
specifically Bouligand-stationary (B-stationary) points.  Rather than pursuing
global optimality through complete pattern enumeration, we exploit the
branches that are locally compatible with a given feasible point.  A feasible
point \(\bar x\) is B-stationary if
\(\grad f(\bar x)\T d\geq 0\) for every
\(d\in T_{\cOmega}(\bar x)\), where \(T_{\cOmega}(\bar x)\) is the Bouligand
tangent cone defined in Section~\ref{sec:problem-geometry}.  Since \(\cOmega\) is a finite union of polyhedra,
\(
T_{\cOmega}(\bar x)
=\bigcup_{J\in\cP(\bar x)}T_{\cOmega_J}(\bar x),
\)
where \(\cP(\bar x)\) indexes the branches containing \(\bar x\)
\cite{FlegelKanzowOutrata2007,Mehlitz2020}.  Hence, B-stationarity is
equivalent to the absence of a first-order descent direction on every
compatible branch, a local test that does not require enumeration of all
global patterns.  For LPCCs, B-stationarity is known to be equivalent to local
minimality \cite{HuMitchellPang2012LPCC,JaraMoroniPangWachter2018LPCC};
Proposition~\ref{prop:convex-bstationarity-local-minimum} extends this
equivalence to every differentiable convex objective, including convex QPCCs.

Several methods exploit the combinatorial structure of complementarity
constraints. Logical-Benders and
branch-and-cut schemes can certify global optimality when run to
completion
\cite{HuMitchellPangBennettKunapuli2008,YuMitchellPang2019LPCCBC,JaraMoroniMitchellPangWachter2020};
related cutting-plane relaxations strengthen the search
\cite{DelPiaLinderothZhu2024}.  Progressive mixed-integer methods instead
emphasize incumbent improvement and local-minimum properties without, in
general, a global certificate \cite{ZhangHanPang2026PIP}.  All these routes
retain a combinatorial component whose cost may grow rapidly with \(m\).

One classical approach replaces each complementarity condition
\(0\leq x_{1,i}\perp x_{2,i}\geq 0\) by the equivalent smooth constraints
\(
x_{1,i}\geq 0, x_{2,i}\geq 0,
x_{1,i}x_{2,i}\leq 0,
\)
and then applies standard NLP methods.  This reformulation, however, fails
LICQ and MFCQ at every feasible point, so classical KKT theory cannot be
invoked in the usual way
\cite{scheel2000stationarity,demiguel2005twosided}.  Moreover, stationarity
for the smooth reformulation may fail to capture the full disjunctive geometry
and is generally weaker than B-stationarity
\cite{pang1999cq,scheel2000stationarity}.  Regularization methods recover
smooth NLP subproblems but do not close this gap without additional
assumptions; for example, under MPCC-LICQ alone, the classical Scholtes scheme
guarantees only C-stationarity, whereas B-stationarity requires further
second-order and upper-level strict-complementarity conditions
\cite{scholtes2001regularization}.

A more recent line of work applies the DC Algorithm (DCA) to
difference-of-convex (DC) penalty reformulations \cite{tao1997convex}.  This
approach has been developed for LCPs, LPCCs, and broader MPCCs
\cite{le2011solving,JaraMoroniPangWachter2018LPCC,flocco2026successive}.
For LPCCs, a piecewise-linear penalty leads to LP subproblems and has shown
favorable performance on large tested instances
\cite{JaraMoroniPangWachter2018LPCC}.  Standard DCA theory, however, typically
guarantees only criticality with respect to the chosen DC decomposition,
which can be strictly weaker than directional stationarity
\cite{JaraMoroniPangWachter2018LPCC,PangRazaviyaynAlvarado2017}.  Recent
variants seek directional stationarity for structured DC models through
perturbation or randomized active-set screening under their respective
assumptions \cite{FengYuan2026pDCA,Niu2026RADCA}.

More closely aligned with the branchwise characterization above are active-set
methods.  They explore branches near the current iterate and generate
candidate iterates on selected branches
\cite{fukushima2002active,FukushimaTsengErratum2007,IzmailovSolodov2008,%
	FangLeyfferMunson2012PivotingLPCC,kirches2022slpcc,nurkanovic2025mpecopt}.
Their progress and stationarity tests, however, typically rely on solving the associated LPCC, QP, or NLP subproblems sufficiently
accurately, often to optimality.  Repeatedly performing such solves can become
costly as the problem size grows.  This leads to the question addressed here:
can B-stationarity be reached for both LPCCs and QPCCs using only LP
subproblems, without fully optimizing the original objective over every
sampled branch?

Our proposed strategy builds on the finite-union geometry and the
branchwise viewpoint of the active-set methods discussed above
\cite{FlegelKanzowOutrata2007,Mehlitz2020,kirches2022slpcc,nurkanovic2025mpecopt}. We use
this structure to organize randomized exploration of compatible branches,
while maintaining feasibility and seeking descent in the original
objective. For LPCCs, we draw on the piecewise-linear penalty formulation
\cite{JaraMoroniPangWachter2018LPCC} to express sampled branch problems as
LPs over a common polyhedron. Changing branches then changes only the
linear objective, allowing simplex bases to be reused and improving
branch-feasible vertices to be accepted before LP optimality. For QPCCs,
we adopt the trust-region technique
\cite{conn2000trust,nocedal2006numerical} and use a linear objective
model on each selected branch. A ratio test controls step acceptance and ensures decrease in the
original objective.

We propose randomized branch-based algorithms for computing B-stationary
points of LPCCs and QPCCs using only LP subproblems. To the best of our
knowledge, for LPCCs, we propose the first randomized algorithm that uses
only LP subproblems and integrates branch selection with simplex pivots
to achieve almost-sure B-stationarity;
for QPCCs, we propose the first randomized algorithm using only LP subproblems.
These algorithms admit almost-sure B-stationarity guarantees. We present a branchwise LP certificate for B-stationarity and
establish its equivalence to local minimality under differentiable convex
objectives. For LPCCs, we analyze an exact randomized branch method and
an exact-penalty simplex-path method that may accept a branch-feasible
improving vertex before penalty-LP optimality. Under boundedness of the
feasible set, the exact branch method stabilizes almost surely at a
B-stationary point. We prove the same result for the simplex-path method
under the assumptions that the penalty parameter is sufficiently large
and the pivot rule terminates finitely. For QPCCs, we introduce thresholded auxiliary branch sampling
within a linearized trust-region method. Under compactness of the initial
level set, every accumulation point of an infinite exact run is almost
surely B-stationary. Finite termination also returns a B-stationary point.

Finally, we report computational results on synthetic LPCC and QPCC
instances derived from bilevel optimization, inverse quadratic
optimization, and affine generalized Nash equilibrium models, with up to
$11{,}984$ complementarity pairs. The proposed methods outperform
state-of-the-art solvers on several large-scale synthetic instances sets.
We also evaluate the LPCC methods as subproblem solvers within an existing
framework for general complementarity-constrained optimization
\cite{nurkanovic2025mpecopt}, using 129 MacMPEC instances \cite{MacMPEC}.
These embedding tests demonstrate competitive solution quality and
computation times relative to the original mixed-integer subproblem solver Gurobi.

The remainder of this paper is organized as follows. Section~\ref{sec:problem-geometry} formalizes the polyhedral branch geometry
and characterizes B-stationarity through branch LPs.
Section~\ref{sec:lpcc-algorithms} develops exact and inexact LP methods for
LPCCs, while Section~\ref{sec:qpcc-linearized} presents the linearized branch
trust-region method for QPCCs and analyzes its convergence.
Section~\ref{sec:numerics} reports the numerical experiments, and
Section~\ref{sec:conclusion} concludes.

\section{Branch Geometry and B-Stationarity}
\label{sec:problem-geometry}
Throughout, $\mathbb N:=\{0,1,\ldots\}$,
$\mathbb N_+:=\{1,2,\ldots\}$, and $[r]:=\{1,\ldots,r\}$ for
$r\in\mathbb N_+$.  For a finite set $I$, $|I|$ denotes its cardinality, and
$\mathbf1_A$ is the indicator of an event $A$.  All vectors, including
gradients, are column vectors; vector inequalities are understood
componentwise, and the superscript $\mathsf T$ denotes transpose.  For
$p\in\{1,2,\infty\}$, $\|v\|_p$ denotes the vector $\ell_p$-norm, with
$\|v\|:=\|v\|_2$; $\|M\|_2$ denotes the matrix spectral norm.  For a nonempty
set $S\subseteq\R^n$ and $p\in\{2,\infty\}$, let
$\dist_p(x,S):=\inf_{y\in S}\|x-y\|_p$.  If $S$ is also closed and convex,
$\proj_S$ denotes the Euclidean projection onto $S$.  A point $x\in S$ is
extreme if it is not a nontrivial convex combination of two distinct points of
$S$, and $\ext S$ is the set of its extreme points.  For a convex function
$h:\R^q\to\R$, define its subdifferential by
$\partial h(x):=\{v\in\R^q\mid h(y)\ge h(x)+v\T(y-x)\ \text{for all }y\in\R^q\}$.

\subsection{Polyhedral disjunctive representation and compatible branches}
\label{subsec:notation}

We now formalize the branch representation described in the Introduction.  Let
\[
    \cP:=
    \{J=(D_1,D_2)\mid
      D_1\cup D_2=[m],\ D_1\cap D_2=\varnothing\}
\]
be the set of labelled ordered partitions of $[m]$.  For
$J=(D_1,D_2)\in\cP$, define the polyhedral branch
\[
\cOmega_J:=
\left\{
x\in\R^n\ \middle|\
\begin{array}{l}
Ax-a=0,\quad Bx-b\ge0,\\
x_{1,i}=0,\ x_{2,i}\ge0,\quad i\in D_1,\\
x_{1,i}\ge0,\ x_{2,i}=0,\quad i\in D_2
\end{array}
\right\}.
\]
Consequently, the feasible set of \eqref{prob:standard-lcc} is
\[
    \cOmega
    :=
    \{x\in\R^n\mid
    Ax-a=0,\ Bx-b\ge0,\ 0\le x_1\perp x_2\ge0\}
    =
    \bigcup_{J\in\cP}\cOmega_J.
\]
Here $|\cP|=2^m$, although some
$\cOmega_J$ may be empty or two labels may determine the same polyhedron.

For the local analysis, let
$z=(z_0,z_1,z_2)$ satisfy
$0\le z_1\perp z_2\ge0$, define
\[
\begin{aligned}
    \cI_{+0}(z)&:=\{i\in[m]\mid z_{1,i}>0,\ z_{2,i}=0\},\\
    \cI_{0+}(z)&:=\{i\in[m]\mid z_{1,i}=0,\ z_{2,i}>0\},\\
    \cI_{00}(z)&:=\{i\in[m]\mid z_{1,i}=0,\ z_{2,i}=0\}.
\end{aligned}
\]
These sets partition $[m]$.  The indices in $\cI_{00}(z)$ are called
biactive.  The labels compatible with $z$ form
\[
\cP(z):=
\bigl\{J=(D_1,D_2)\in\cP\ \bigm|\
\cI_{0+}(z)\subseteq D_1,\
\cI_{+0}(z)\subseteq D_2\bigr\}.
\]
Each biactive index can be assigned independently to $D_1$ or $D_2$; hence
$|\cP(z)|=2^{|\cI_{00}(z)|}$.  If $z\in\cOmega$, then
\[
    \cP(z)=\{J\in\cP\mid z\in\cOmega_J\},
\]
so $\cP(z)$ is precisely the compatible label set at $z$.

\subsection{Tangent cones and B-stationarity}
\label{subsec:bstationarity}

The algorithms developed below target
B-stationarity, a local first-order stationarity concept,
rather than global optimality.  We therefore turn from the global disjunctive representation to
the local feasible geometry of $\cOmega$, which is described by the
Bouligand tangent cone.  For a set $S\subseteq\R^n$ and $\bar x\in S$, this
cone is defined by
\[
    T_S(\bar x)
    :=
    \left\{
    d\in\R^n\ \middle|\
    \exists\,t_\ell\downarrow0,\ d^\ell\to d:
    \bar x+t_\ell d^\ell\in S
    \right\}.
\]
Because $\cOmega$ is a finite union of closed polyhedra, its Bouligand tangent
cone is the union of the tangent cones to the active disjuncts
\cite{FlegelKanzowOutrata2007,Gfrerer2014,%
BenkoGfrerer2018,Mehlitz2020}:
\begin{equation}\label{eq:finite-union-tangent}
    T_\cOmega(\bar x)
    =
    \bigcup_{J\in\cP(\bar x)}
    T_{\cOmega_J}(\bar x)
    \qquad(\bar x\in\cOmega).
\end{equation}

Writing $B_r$ for the $r$th row of $B$, let
$\cA(\bar x):=\{r\mid(B\bar x-b)_r=0\}$.  For
$J=(D_1,D_2)\in\cP(\bar x)$, the polyhedral tangent cone
of the branch $\cOmega_J$ is
\[
T_{\cOmega_J}(\bar x)=
\left\{d=(d_0,d_1,d_2)\ \middle|\
\begin{array}{@{}l@{}}
Ad=0,\quad B_r d\ge0\quad(r\in\cA(\bar x)),\\
d_{1,i}=0\ (i\in D_1),\quad d_{2,i}=0\ (i\in D_2),\\
d_{2,i}\ge0\quad(i\in D_1\cap\cI_{00}(\bar x)),\\
d_{1,i}\ge0\quad(i\in D_2\cap\cI_{00}(\bar x))
\end{array}\right\}.
\]
Taking the union gives the tangent cone
of the original feasible set $\cOmega$
\cite{scheel2000stationarity,pang1999cq}
\[
T_\cOmega(\bar x)=
\left\{d=(d_0,d_1,d_2)\ \middle|\
\begin{array}{@{}l@{}}
Ad=0,\quad B_r d\ge0\quad(r\in\cA(\bar x)),\\
d_{1,i}=0\ (i\in\cI_{0+}(\bar x)),\quad
d_{2,i}=0\ (i\in\cI_{+0}(\bar x)),\\
0\le d_{1,i}\perp d_{2,i}\ge0\quad(i\in\cI_{00}(\bar x))
\end{array}\right\}.
\]
We consider the following notion of B-stationarity
\cite{pang1999cq,scheel2000stationarity}.
\begin{definition}\label{def:bstationarity}
A feasible point $\bar x\in\cOmega$ is Bouligand stationary
(B-stationary) for \eqref{prob:standard-lcc} if
\[
    \grad f(\bar x)\T d\ge0
    \qquad\forall d\in T_\cOmega(\bar x).
\]
\end{definition}

The finite-union tangent formula shows that B-stationarity is equivalent
to the absence of first-order descent on every compatible branch:
\begin{equation}\label{eq:branch-bstationarity}
    \grad f(\bar x)\T d\ge0
    \qquad
    \forall d\in T_{\cOmega_J}(\bar x),\
    \forall J\in\cP(\bar x).
\end{equation}

For LPCCs, the equivalence below is
\cite[Proposition~1]{JaraMoroniPangWachter2018LPCC};
the equivalence extends to differentiable convex
objectives as follows.
\begin{proposition}
\label{prop:convex-bstationarity-local-minimum}
Let $f$ be differentiable and convex.  A point $\bar x\in\cOmega$ is
B-stationary if and only if it is a local minimizer of
\eqref{prob:standard-lcc}.
\end{proposition}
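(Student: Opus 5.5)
We prove the two implications separately. The reverse direction (local minimizer implies B-stationary) needs only differentiability; the forward direction uses convexity together with the local finite-union structure of $\cOmega$ near $\bar x$.

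\emph{Local minimizer $\Rightarrow$ B-stationary.} Let $\bar x$ be a local minimizer and $d\in T_\cOmega(\bar x)$. Choose $t_\ell\downarrow0$ and $d^\ell\to d$ with $\bar x+t_\ell d^\ell\in\cOmega$. For large $\ell$ these points lie in the neighborhood where $\bar x$ is minimal, so
\[
0\le \frac{f(\bar x+t_\ell d^\ell)-f(\bar x)}{t_\ell}
=\grad f(\bar x)\T d^\ell+o(1).
\]
Letting $\ell\to\infty$ gives $\grad f(\bar x)\T d\ge0$. This argument does not use convexity.

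\emph{B-stationary $\Rightarrow$ local minimizer.} The key step is a neighborhood claim: there exists $\varepsilon>0$ such that
\[
\cOmega\cap\mathbb B_\varepsilon(\bar x)=\bigcup_{J\in\cP(\bar x)}\cOmega_J\cap\mathbb B_\varepsilon(\bar x).
\]
To prove it, note that each $\cOmega_J$ is closed, so for $J\notin\cP(\bar x)$ we have $\bar x\notin\cOmega_J$ and $\dist(\bar x,\cOmega_J)>0$ (set to $+\infty$ if $\cOmega_J$ is empty). Since $\cP$ is finite, any $\varepsilon$ smaller than the minimum of these distances works. Concretely, the reason is that a point close enough to $\bar x$ keeps $x_{1,i}>0$ for $i\in\cI_{+0}(\bar x)$ and $x_{2,i}>0$ for $i\in\cI_{0+}(\bar x)$, so any branch containing it must be compatible with $\bar x$.

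Next, fix $J\in\cP(\bar x)$ and $x\in\cOmega_J\cap\mathbb B_\varepsilon(\bar x)$. Because $\cOmega_J$ is a convex polyhedron containing $\bar x$, we have $x-\bar x\in T_{\cOmega_J}(\bar x)$; indeed $\bar x+t(x-\bar x)\in\cOmega_J$ for all $t\in[0,1]$. By \eqref{eq:finite-union-tangent}, this direction lies in $T_\cOmega(\bar x)$. B-stationarity therefore gives $\grad f(\bar x)\T(x-\bar x)\ge0$, and convexity of $f$ yields
\[
f(x)\ge f(\bar x)+\grad f(\bar x)\T(x-\bar x)\ge f(\bar x).
\]
Combined with the neighborhood claim, this shows $f(x)\ge f(\bar x)$ for all $x\in\cOmega\cap\mathbb B_\varepsilon(\bar x)$, so $\bar x$ is a local minimizer.

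The main obstacle is the neighborhood claim, that only compatible branches meet a small ball around $\bar x$. It follows from closedness of the finitely many polyhedra $\cOmega_J$, equivalently from continuity of the strictly positive components of $\bar x$. With that in hand, the rest is the standard argument that first-order stationarity plus convexity gives minimality on each convex piece, and the union of the compatible pieces covers a neighborhood of $\bar x$ in $\cOmega$.
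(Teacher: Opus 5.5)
Your proof is correct and follows the paper's argument. The paper also rules out the finitely many incompatible closed branches by a positive distance, and applies the convexity inequality together with B-stationarity on each compatible branch. For the converse direction the paper cites the standard first-order necessary condition (Rockafellar--Wets, Theorem~6.12), which your difference-quotient argument simply proves directly.
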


\begin{proof}
If $\bar x$ is a local minimizer, the standard first-order necessary condition
\cite[Theorem~6.12]{RockafellarWets1998} gives
$\grad f(\bar x)\T d\ge0$ for every $d\in T_\cOmega(\bar x)$, which is
B-stationarity.  Equivalently, by \eqref{eq:finite-union-tangent}, this
condition holds on every active disjunct.  Conversely, suppose that $\bar x$
is B-stationary.  For every $J\in\cP(\bar x)$ and $y\in\cOmega_J$, convexity
gives $y-\bar x\in T_{\cOmega_J}(\bar x)$ and
$f(y)\ge f(\bar x)+\grad f(\bar x)\T(y-\bar x)\ge f(\bar x)$.  For every
$J\in\cP\setminus\cP(\bar x)$ with $\cOmega_J\ne\varnothing$, closedness
gives $\dist_2(\bar x,\cOmega_J)>0$.  Since $\cP$ is finite, there exists
$\delta>0$ such that
$\{y\in\cOmega\mid\|y-\bar x\|<\delta\}\subseteq
\bigcup_{J\in\cP(\bar x)}\cOmega_J$.  Hence $f(y)\ge f(\bar x)$ for all
feasible $y$ sufficiently close to $\bar x$, proving local minimality.
\end{proof}

\subsection{Branch-LP characterization of B-stationarity}
\label{subsec:branch-lp-characterization}

By \eqref{eq:branch-bstationarity}, failure of B-stationarity yields a
first-order descent direction on a compatible polyhedral branch.  Scaling such
a direction within a prescribed box radius leads to the bounded LP below and
the stationarity test used by the algorithms.

For $x\in\cOmega$, $J\in\cP(x)$, and $\Delta>0$, define the branch-LP
(bLP) predicted descent
\begin{equation}\label{eq:branch-pred}
    p_J(x,\Delta)
    :=
    \max\left\{
    -\grad f(x)\T d
    \ \middle|\
    x+d\in\cOmega_J,\ \|d\|_\infty\le\Delta
    \right\}.
\end{equation}
The zero step is feasible and the feasible region is compact; hence
$p_J(x,\Delta)$ is well defined and nonnegative.

\begin{proposition}
\label{prop:branch-lp-certificate}
Let $\bar x\in\cOmega$.  The following statements are equivalent:
\begin{enumerate}[label=(\roman*)]
    \item $\bar x$ is B-stationary.
    \item For every $J\in\cP(\bar x)$ and every $\Delta>0$,
    \[
        p_J(\bar x,\Delta)=0.
    \]
    \item There exists $\Delta_0>0$ such that
    \[
        p_J(\bar x,\Delta_0)=0
        \qquad\forall J\in\cP(\bar x).
    \]
\end{enumerate}
Moreover, if $\bar x$ is not B-stationary, then there exists
$J_{\rm dec}\in\cP(\bar x)$ such that
\[
p_{J_{\rm dec}}(\bar x,\Delta)>0
    \qquad\forall\Delta>0.
\]
\end{proposition}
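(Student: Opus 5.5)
The proof will run the cycle (i)$\Rightarrow$(ii)$\Rightarrow$(iii)$\Rightarrow$(i) and then read off the ``moreover'' clause from the contrapositive of the last step. Everything rests on one observation, made first. Because $\cOmega_J$ is a polyhedron containing $\bar x$, its tangent cone $T_{\cOmega_J}(\bar x)$ coincides with the cone of feasible directions. Concretely, for every $d\in T_{\cOmega_J}(\bar x)$ there exists $\bar t>0$ with $\bar x+td\in\cOmega_J$ for all $t\in[0,\bar t]$. Conversely, $y-\bar x\in T_{\cOmega_J}(\bar x)$ for every $y\in\cOmega_J$, by convexity. I will verify this directly from the displayed description of $T_{\cOmega_J}(\bar x)$. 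The constraints active at $\bar x$ are linearized exactly. Each inactive inequality, namely $(B\bar x-b)_r>0$, $\bar x_{2,i}>0$ for $i\in D_1\cap\cI_{0+}(\bar x)$, and $\bar x_{1,i}>0$ for $i\in D_2\cap\cI_{+0}(\bar x)$, holds with slack, so it survives small steps. The equality constraints $x_{1,i}=0$ for $i\in D_1$ and $x_{2,i}=0$ for $i\in D_2$ are preserved because the corresponding components of $d$ vanish.

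For (i)$\Rightarrow$(ii), fix $J\in\cP(\bar x)$ and $\Delta>0$, and take any $d$ feasible in \eqref{eq:branch-pred}. Then $\bar x+d\in\cOmega_J$, so $d\in T_{\cOmega_J}(\bar x)\subseteq T_\cOmega(\bar x)$ by \eqref{eq:finite-union-tangent}. B-stationarity then gives $-\grad f(\bar x)\T d\le0$. Hence $p_J(\bar x,\Delta)\le0$, and together with the nonnegativity already noted, $p_J(\bar x,\Delta)=0$. The step (ii)$\Rightarrow$(iii) is trivial.

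For (iii)$\Rightarrow$(i), I argue by contraposition, and this also proves the final claim. Suppose $\bar x$ is not B-stationary. By \eqref{eq:branch-bstationarity} there exist $J_{\rm dec}\in\cP(\bar x)$ and $d\in T_{\cOmega_{J_{\rm dec}}}(\bar x)$ with $\grad f(\bar x)\T d<0$. By the feasible-direction observation, $\bar x+td\in\cOmega_{J_{\rm dec}}$ for all $t\in[0,\bar t]$. Now fix any $\Delta>0$ and choose $t=\min\{\bar t,\Delta/\|d\|_\infty\}>0$; note that $d\neq0$. Then $td$ is feasible for \eqref{eq:branch-pred} and
\[
p_{J_{\rm dec}}(\bar x,\Delta)\ge -t\,\grad f(\bar x)\T d>0 .
\]
This holds for every $\Delta>0$, which gives the ``moreover'' statement, and in particular it contradicts (iii) at $\Delta_0$.

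The only real content is the feasible-direction property of the branch tangent cone. One must check that the displayed formula for $T_{\cOmega_J}(\bar x)$ indeed yields feasible directions, which amounts to tracking the inactive constraints. Alternatively, one can cite the standard fact that for a polyhedron the tangent cone equals the closed cone of feasible directions, and that the latter is already closed. I expect this bookkeeping to be the main, though mild, obstacle.
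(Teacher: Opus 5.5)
Your proposal is correct and follows essentially the same argument as the paper. You prove (i)$\Rightarrow$(ii) via convexity of the branch and the union formula for the tangent cone. You prove the converse and the ``moreover'' clause via a short step $td$ along a branch tangent direction, which stays in the polyhedral branch and fits in the box.

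The only differences are organizational. You obtain (iii)$\Rightarrow$(i) and the final claim from a single contrapositive argument rather than two parallel ones. You also explicitly check the feasible-direction property of $T_{\cOmega_J}(\bar x)$, which the paper simply invokes from polyhedrality.
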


\begin{proof}
Suppose (i) holds.  If
$\bar x+d\in\cOmega_J$ for $J\in\cP(\bar x)$, convexity of the
branch gives $d\in T_{\cOmega_J}(\bar x)$.  Consequently,
$-\grad f(\bar x)\T d\le0$.  Since the zero step is feasible,
$p_J(\bar x,\Delta)=0$, proving (ii); (ii) implies (iii) immediately.

Assume (iii) and take $d\in T_\cOmega(\bar x)$.  By the tangent
decomposition,
$d\in T_{\cOmega_J}(\bar x)$ for some $J\in\cP(\bar x)$.  Since
$\cOmega_J$ is polyhedral, choose $t>0$ sufficiently small that
$\bar x+t d\in\cOmega_J$ and
$t\|d\|_\infty\le\Delta_0$.  Then
\[
    -t\grad f(\bar x)\T d
    \le p_J(\bar x,\Delta_0)=0,
\]
which proves (i).

Finally, if $\bar x$ is not B-stationary, there is
$d\in T_\cOmega(\bar x)$ such that $\grad f(\bar x)\T d<0$.
The tangent decomposition yields this direction in
$T_{\cOmega_{J_{\rm dec}}}(\bar x)$ for some
$J_{\rm dec}\in\cP(\bar x)$.  Since $\cOmega_{J_{\rm dec}}$ is polyhedral,
given any $\Delta>0$, choose $t>0$ such that
\[
   \bar x+t d\in\cOmega_{J_{\rm dec}},
    \qquad t\|d\|_\infty\le\Delta.
\]
It follows that \( p_{J_{\rm dec}}(\bar x,\Delta)
    \ge -t\grad f(\bar x)\T d>0\).
This proves the conclusion.
\end{proof}

Proposition~\ref{prop:branch-lp-certificate} characterizes B-stationarity
through the values $p_J$: they vanish on all compatible branches if and only
if $\bar x$ is B-stationary.  Otherwise,
by 
Proposition~\ref{prop:branch-lp-certificate}, some
$J_{\rm dec}\in\cP(\bar x)$ satisfies
$p_{J_{\rm dec}}(\bar x,\Delta)>0$ for every $\Delta>0$, revealing a
compatible branch on which descent can be pursued.

\section{Exact and Inexact LP Methods for LPCCs}
\label{sec:lpcc-algorithms}

This section considers the LPCC case $f(x)=c\T x$ and develops two randomized
methods for LPCCs: Randomized Branch-LP Descent Algorithm (\RBLA) and
Randomized Penalty-objective Simplex search Algorithm (\RPSA).
  We establish their almost-sure stabilization at B-stationary points
under the assumptions stated below.

\begin{assumption}
\label{ass:lpcc-bounded-polyhedra}
The feasible set $\cOmega$ is bounded in the LPCC case.
\end{assumption}

\subsection{Exact branch-LP method for LPCCs}
\label{subsec:lpcc-exact-branch}

For each label $J\in\cP$ whose polyhedral branch $\cOmega_J$ is nonempty, let
\[
    f_J^*:=\min_{y\in\cOmega_J}c\T y.
\]
Assumption~\ref{ass:lpcc-bounded-polyhedra} ensures that this minimum is
attained.  For $x\in\cOmega$ and $J\in\cP(x)$, sufficiently large $\Delta$
makes the box constraint in \eqref{eq:branch-pred} redundant, so
$p_J(x,\Delta)=c\T x-f_J^*$.
Proposition~\ref{prop:branch-lp-certificate} therefore gives the exact
branchwise certificate
\begin{equation}\label{eq:lpcc-full-branch-certificate}
    x\text{ is B-stationary}
    \quad\Longleftrightarrow\quad
    f_J^*=c\T x\quad\forall J\in\cP(x).
\end{equation}
If the certificate fails, some compatible branch satisfies
$f_J^*<c\T x$.

The randomized branch-LP descent algorithm (\RBLA) samples
one compatible branch at each iteration.  Specifically, at the $k$-th iteration, it
samples one label $J^k\in\cP(x^k)$ and solves the LP on $\cOmega_{J^k}$
exactly.

For describing the random behavior of \RBLA{}, we introduce a common probability space
$(\Xi,\mathcal G,\mathbb P)$.  Here $\Xi$ is the set of possible
branch-selection sequences, $\mathcal G$ is the sigma-algebra of events
generated by these selections, and $\mathbb P$ is their probability law.  Let
$\{\mathcal F_k\}_{k\in\mathbb N}$ be the filtration representing all
information available immediately before $J^k$ is sampled.  Thus $x^k$ and
$\cP(x^k)$ are $\mathcal F_k$-measurable, while $J^k$ and the quantities
produced at the $k$-th iteration are $\mathcal F_{k+1}$-measurable.  At each
iteration, $J^k$ is sampled uniformly from $\cP(x^k)$ conditional on
$\mathcal F_k$. For every $J\in\cP$, define
\[
    A_k^J:=\{J\in\cP(x^k)\},\qquad E_k^J:=\{J^k=J\}.
\]
The event $A_k^J$ occurs when branch $J$ is compatible with $x^k$, and
$E_k^J$ occurs when branch $J$ is sampled at the $k$-th iteration.
The conditional sampling rule is
\begin{equation}\label{eq:lpcc-rbla-conditional-uniform}
\mathbb P(E_k^J\mid\mathcal F_k)=\frac{\mathbf1_{A_k^J}}{|\cP(x^k)|}.
\end{equation}
Here and below, probability-one statements refer to $\mathbb P$.  If a branch
LP has multiple minimizers, any one of them may be selected.  The resulting
method is stated in Algorithm~\ref{alg:lpcc-rbla}.

\begin{breakablealgorithm}
\caption{Randomized Branch-LP Descent Algorithm (\RBLA)}
\label{alg:lpcc-branch}
\label{alg:lpcc-rbla}
\begin{algorithmic}[1]
\Require feasible point $x^0\in\cOmega$
\For{$k=0,1,2,\ldots$}
    \State Sample $J^k$ conditionally uniformly from $\cP(x^k)$
    \State Solve the branch LP exactly
    \[
        \widehat x^k\in
        \argmin\{c\T x\mid x\in\cOmega_{J^k}\}
    \]
    \If{$c\T\widehat x^k<c\T x^k$}
        \State Set $x^{k+1}=\widehat x^k$
    \Else
        \State Set $x^{k+1}=x^k$
    \EndIf
\EndFor
\end{algorithmic}
\end{breakablealgorithm}

Because $J^k\in\cP(x^k)$, the sampled branch contains $x^k$ and is therefore
nonempty.  It is compact under
Assumption~\ref{ass:lpcc-bounded-polyhedra}, so the branch LP in line~3 admits
an optimal solution and \RBLA{} is well defined.

\begin{lemma}
\label{lem:lpcc-finite-sampling}
Let $\{x^k\}$ and $\{J^k\}$ be, respectively, the iterate and sampled-label
sequences generated by Algorithm~\ref{alg:lpcc-rbla} (\RBLA{}).  With
probability one, the following implication holds simultaneously for every
$N\in\mathbb N$:
\[
\begin{gathered}
    x^k=x^N\quad\forall k\ge N
    \quad\Longrightarrow\\[2pt]
    E_k^J\text{ occurs for infinitely many }k\ge N
    \quad\forall J\in\cP(x^N).
\end{gathered}
\]
\end{lemma}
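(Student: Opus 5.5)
The plan is to fix $N\in\mathbb N$ and $J\in\cP$, and show that the bad event
\[
B_{N,J}:=\{x^k=x^N\ \forall k\ge N\}\cap A_N^J\cap\{E_k^J\text{ occurs only finitely often for }k\ge N\}
\]
has probability zero. Since $\mathbb N\times\cP$ is countable, the union of all $B_{N,J}$ is then a null set. Off this union the implication in the statement holds simultaneously for every $N$. Note that on $\{x^k=x^N\ \forall k\ge N\}$ we have $\cP(x^k)=\cP(x^N)$ for all $k\ge N$. Hence $A_k^J$ holds for every $k\ge N$ whenever $J\in\cP(x^N)$.

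Next I decompose by the last sampling time. We have $B_{N,J}\subseteq\bigcup_{M\ge N}C_M$, where
\[
C_M:=\bigcap_{k\ge M}\bigl(\{x^k=x^N\}\cap A_k^J\cap (E_k^J)^c\bigr).
\]
It suffices to show $\mathbb P(C_M)=0$ for each $M$. For $L>M$, let $C_{M,L}:=\bigcap_{k=M}^{L}(A_k^J\cap(E_k^J)^c)$, which is a decreasing sequence containing $C_M$. Both $A_k^J$ and $(E_k^J)^c$ for $k<L$ are $\mathcal F_L$-measurable, since $E_k^J$ is $\mathcal F_{k+1}$-measurable. Together with \eqref{eq:lpcc-rbla-conditional-uniform} and $|\cP(x^L)|\le 2^m$, conditioning on $\mathcal F_L$ gives
\[
\mathbb P(C_{M,L})=\mathbb E\Bigl[\mathbf1_{C_{M,L-1}}\mathbf1_{A_L^J}\bigl(1-|\cP(x^L)|^{-1}\bigr)\Bigr]\le(1-2^{-m})\,\mathbb P(C_{M,L-1}).
\]
Iterating yields $\mathbb P(C_{M,L})\le(1-2^{-m})^{L-M+1}\to0$, so $\mathbb P(C_M)=0$. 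Summing over $M$ and then over $(N,J)$ finishes the argument. Equivalently, one could apply the conditional (Lévy) Borel--Cantelli lemma: $\sum_k\mathbb P(E_k^J\mid\mathcal F_k)\ge\sum_{k\ge N}2^{-m}=\infty$ on the stabilization event, so $E_k^J$ occurs infinitely often almost surely there. I would mention this as an alternative route.

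The main subtlety is the measurability bookkeeping. The stabilization event $\{x^k=x^N\ \forall k\ge N\}$ depends on the future, so one cannot condition on it directly. That is why the proof intersects with the $\mathcal F_L$-adapted events $A_k^J$ rather than with the stabilization event itself. It uses only the uniform lower bound $2^{-m}$ on the conditional sampling probability whenever $J$ is compatible. The other point to make explicit is that "simultaneously for every $N$" follows because there are only countably many pairs $(N,J)$.
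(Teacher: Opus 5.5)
Your proof is correct, and its main line differs from the paper's. The paper fixes $J\in\cP$ and notes that $\mathbb P(E_k^J\mid\mathcal F_k)\ge 2^{-m}\mathbf 1_{A_k^J}$. It then applies the conditional (L\'evy) Borel--Cantelli lemma: almost surely, $E_k^J$ occurs infinitely often whenever $A_k^J$ does. A union over the finitely many $J$ gives a single probability-one event that does not depend on $N$, and the implication for each $N$ then follows deterministically. This is the route you mention as an alternative.

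Your primary route instead proves the needed special case directly. The last-exit decomposition into the events $C_M$, together with one-step conditioning on $\mathcal F_L$, gives the explicit geometric bound $(1-2^{-m})^{L-M+1}$. So no external martingale result is needed, and the measurability bookkeeping is fully explicit; the countable union over $(N,J)$ is harmless.

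The cost is generality. Your estimate needs $A_k^J$ at every index $k\in[M,L]$, which holds here only because the iterate is eventually constant. The paper's formulation gives the stronger statement that $A_k^J$ occurring infinitely often forces $E_k^J$ to occur infinitely often. The paper reuses that stronger form verbatim for \RPSA{} and, more importantly, in Lemma~\ref{lem:fairness} for \TBTA{}. There the eligibility sets $\mathcal K_{J,r}$ and $\widehat{\mathcal K}_{J,r}$ are only infinite, not cofinite. In that setting your consecutive-index product bound would have to be replaced by a stopping-time or supermartingale version of the same argument.
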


\begin{proof}
Fix $J\in\cP$, and recall the events $A_k^J$ and $E_k^J$ defined above.  By
\eqref{eq:lpcc-rbla-conditional-uniform},
$\mathbb P(E_k^J\mid\mathcal F_k)=|\cP(x^k)|^{-1}$ when $A_k^J$ occurs and is
zero otherwise.  Equivalently,
\[
    \mathbb P(E_k^J\mid\mathcal F_k)
    =\frac{\mathbf1_{A_k^J}}{|\cP(x^k)|}
    \ge 2^{-m}\mathbf1_{A_k^J},
\]
where the inequality uses $|\cP(x^k)|\le2^m$.  If $A_k^J$ occurs infinitely
often, then $\sum_{k=0}^\infty\mathbf1_{A_k^J}=\infty$, and consequently
\[
    \sum_{k=0}^{\infty}\mathbb P(E_k^J\mid\mathcal F_k)
    \ge
    2^{-m}\sum_{k=0}^{\infty}\mathbf1_{A_k^J}
    =\infty.
\]
The conditional Borel--Cantelli lemma
\cite{Chen1978ConditionalBorelCantelli} therefore shows that, almost surely,
$E_k^J$ also occurs infinitely often whenever $A_k^J$ does.  Thus the desired
implication holds with probability one for each fixed $J$.  Let
$\mathcal E_{\rm ps}$ be the event that, simultaneously for every
$J\in\cP$, $E_k^J$ occurs infinitely often whenever
$A_k^J$ occurs infinitely often.  Since $|\cP|=2^m$, the union of the
finitely many probability-zero failure events, one for each $J\in\cP$, still
has probability zero; hence $\mathbb P(\mathcal E_{\rm ps})=1$.  Fix a
realization in $\mathcal E_{\rm ps}$, suppose that $x^k=x^N$ for every
$k\ge N$, and take
$J\in\cP(x^N)$.  Then $A_k^J$ occurs for every $k\ge N$, so it occurs
infinitely often and therefore so does $E_k^J$.  Thus $E_k^J$ occurs for infinitely
many $k\ge N$.  This proves the stated implication for every such $N$ and
$J$.
\end{proof}

\begin{lemma}
\label{lem:lpcc-rbla-basic-stabilization}
Under Assumption~\ref{ass:lpcc-bounded-polyhedra}, each realization of
Algorithm~\ref{alg:lpcc-rbla} (\RBLA{}) has a finite, realization-dependent
index $N$ such that its iterate sequence satisfies
\[
    x^k=x^N\qquad\forall k\ge N.
\]
\end{lemma}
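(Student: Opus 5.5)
The plan is to show that the objective values $c\T x^k$ can strictly decrease only finitely many times. The point is that every accepted iterate is an optimal value of some branch LP, and there are only finitely many branches.

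By construction of Algorithm~\ref{alg:lpcc-rbla}, either $x^{k+1}=x^k$ or $x^{k+1}=\widehat x^k$ with $c\T\widehat x^k<c\T x^k$. Hence the sequence $\{c\T x^k\}$ is nonincreasing, and it changes exactly at the iterations where the iterate changes. Whenever $x^{k+1}\ne x^k$, the new iterate is a minimizer of $c\T y$ over $\cOmega_{J^k}$. Therefore
\[
    c\T x^{k+1}=f_{J^k}^*\in\mathcal V:=\{f_J^*\mid J\in\cP,\ \cOmega_J\ne\varnothing\}.
\]
Assumption~\ref{ass:lpcc-bounded-polyhedra} makes each nonempty branch compact, so each $f_J^*$ is attained and finite. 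The set $\mathcal V$ has at most $2^m$ elements.

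Now consider the iterations $k_1<k_2<\cdots$ at which the iterate changes. The values $c\T x^{k_1+1}>c\T x^{k_2+1}>\cdots$ form a strictly decreasing sequence, since no decrease occurs between consecutive change indices. All of these values lie in the finite set $\mathcal V$, so there can be at most $|\mathcal V|\le 2^m$ such iterations. Let $N$ be one plus the last change index, or $N=0$ if no change occurs. Then $x^k=x^N$ for all $k\ge N$, and this holds for every realization, not merely almost surely.

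The only delicate point is the remark that strict decrease in $c\T x$ forces a change of iterate and vice versa. A change of iterate while the value stays fixed cannot happen, because acceptance requires strict inequality. This observation alone rules out cycling among the different minimizers of the same branch LP. No probabilistic argument is needed here. The randomness enters only later, when Lemma~\ref{lem:lpcc-finite-sampling} is combined with this result to show that the limit point is B-stationary.
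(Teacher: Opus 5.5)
Your proof is correct and follows essentially the paper's argument. The paper places every value $c\T x^k$ in the finite set $\{c\T x^0\}\cup\{f_J^*\mid J\in\cP,\ \cOmega_J\ne\varnothing\}$ and uses monotonicity to get eventual constancy of $c\T x^k$, then strict acceptance to get eventual constancy of $x^k$. Your count of strict decreases at the change iterations is the same idea, and it also gives the explicit deterministic bound of at most $2^m$ changes.
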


\begin{proof}
Define
\[
    \mathcal V_{\rm br}
    :=
    \{c\T x^0\}
    \cup
    \{f_J^*:\ J\in\cP,\ \cOmega_J\ne\varnothing\}.
\]
Since $|\cP|=2^m$, the set $\mathcal V_{\rm br}$ is finite.
By the definition of $\mathcal V_{\rm br}$ and the update in \RBLA{},
$c\T x^k\in\mathcal V_{\rm br}$ for every $k\in\mathbb N$.  Since
$\mathcal V_{\rm br}$ is finite and $\{c\T x^k\}$ is nonincreasing, there
exists a finite index $N$ on each realization such that
$c\T x^k=c\T x^N$ for all $k\ge N$.  By the strict-acceptance update in
\RBLA{}, invariance of $c\T x^k$ implies invariance of $x^k$.
Hence $x^k=x^N$ for all $k\ge N$.
\end{proof}

\begin{theorem}
\label{thm:lpcc-rbla-convergence}
Let $\{x^k\}$ be generated by Algorithm~\ref{alg:lpcc-rbla} (\RBLA{}).
Under Assumption~\ref{ass:lpcc-bounded-polyhedra}, let $x^\infty:=x^N$, where
$N$ is as in Lemma~\ref{lem:lpcc-rbla-basic-stabilization}.  Then $x^\infty$
is B-stationary, and hence a local minimizer of the LPCC
\eqref{prob:standard-lcc}, almost surely.
\end{theorem}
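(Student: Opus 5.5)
The plan is to combine Lemma~\ref{lem:lpcc-rbla-basic-stabilization} with Lemma~\ref{lem:lpcc-finite-sampling} and then apply the exact branchwise certificate \eqref{eq:lpcc-full-branch-certificate}. Fix the probability-one event $\mathcal E_{\rm ps}$ from the proof of Lemma~\ref{lem:lpcc-finite-sampling}, and work on an arbitrary realization in it. By Lemma~\ref{lem:lpcc-rbla-basic-stabilization}, which holds on every realization under Assumption~\ref{ass:lpcc-bounded-polyhedra}, there is a finite $N$ with $x^k=x^N=x^\infty$ for all $k\ge N$. The antecedent of Lemma~\ref{lem:lpcc-finite-sampling} therefore holds. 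Consequently, every $J\in\cP(x^\infty)$ is sampled at some (indeed infinitely many) $k\ge N$.

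Next I will show that $f_J^*=c\T x^\infty$ for every $J\in\cP(x^\infty)$, arguing by contradiction. Take $J\in\cP(x^\infty)$ and an index $k\ge N$ with $J^k=J$. Since $x^\infty\in\cOmega_J$, we have $f_J^*\le c\T x^\infty$. If the inequality were strict, the exact branch LP in line~3 would return $\widehat x^k$ with $c\T\widehat x^k=f_J^*<c\T x^k$, so the acceptance test would set $x^{k+1}=\widehat x^k$. This gives $c\T x^{k+1}<c\T x^k$, contradicting $x^{k+1}=x^k$. Hence equality holds for all compatible $J$. By \eqref{eq:lpcc-full-branch-certificate}, which rests on Proposition~\ref{prop:branch-lp-certificate} with $\Delta$ taken large enough that the box is redundant on the compact branches, $x^\infty$ is B-stationary.

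Finally, since $f(x)=c\T x$ is differentiable and convex, Proposition~\ref{prop:convex-bstationarity-local-minimum} upgrades B-stationarity to local minimality. Because the realization was arbitrary in $\mathcal E_{\rm ps}$ and $\mathbb P(\mathcal E_{\rm ps})=1$, the conclusion holds almost surely.

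The only delicate point is the measure-theoretic one: the stabilization index $N$ is random, so the sampling guarantee must hold simultaneously for all $N$. Lemma~\ref{lem:lpcc-finite-sampling} is stated exactly in that uniform form, so the argument only needs to invoke it pathwise, not for a fixed $N$. A secondary check is that the certificate needs a uniform $\Delta$. This is harmless because $\cOmega$ is bounded, so any $\Delta$ exceeding the diameter of $\cOmega$ works for all branches at once.
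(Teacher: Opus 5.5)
Your proposal is correct and follows essentially the same argument as the paper. You fix a realization in $\mathcal E_{\rm ps}$, use stabilization to get repeated sampling of every compatible branch, contradict eventual constancy via the exact branch LP and the certificate \eqref{eq:lpcc-full-branch-certificate}, and then invoke Proposition~\ref{prop:convex-bstationarity-local-minimum}; the only differences are cosmetic (you show $f_J^*=c\T x^\infty$ for every $J\in\cP(x^\infty)$ directly rather than assuming non-B-stationarity, and you make the uniform-$\Delta$ remark explicit).
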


\begin{proof}
Recall the event $\mathcal E_{\rm ps}$ defined in the proof of
Lemma~\ref{lem:lpcc-finite-sampling}, which satisfies
$\mathbb P(\mathcal E_{\rm ps})=1$.  Fix a realization in
$\mathcal E_{\rm ps}$, and let $N$ be a stabilization index supplied by
Lemma~\ref{lem:lpcc-rbla-basic-stabilization}.  Then
$x^k=x^\infty=x^N$ for every $k\ge N$, so every
$J\in\cP(x^\infty)$ remains compatible, so $A_k^J$ occurs for every
$k\ge N$; by the definition of $\mathcal E_{\rm ps}$, $E_k^J$ occurs
infinitely often.
Suppose that $x^\infty$ is not B-stationary.
The certificate \eqref{eq:lpcc-full-branch-certificate} then gives a label
$\widetilde J\in\cP(x^\infty)$ such that
$f_{\widetilde J}^*<c\T x^\infty$.  In particular, $E_k^{\widetilde J}$ occurs for infinitely many $k\ge N$.  At any
such iteration, exact solution of the branch LP gives
$c\T\widehat x^k=f_{\widetilde J}^*<c\T x^\infty$, so \RBLA{} accepts
$\widehat x^k$.  This contradicts eventual constancy.  Thus $x^\infty$ is
B-stationary for every realization in $\mathcal E_{\rm ps}$.  Since
$\mathbb P(\mathcal E_{\rm ps})=1$, the claimed almost-sure conclusion follows.
Local minimality follows from
Proposition~\ref{prop:convex-bstationarity-local-minimum}.
\end{proof}

\subsection{Exact-penalty reformulation and connection with DCA}
\label{subsec:lpcc-objective-selection}

At the $k$-th iteration, \RBLA{} solves the sampled full branch LP
$\min\{c\T x\mid x\in\cOmega_{J^k}\}$ to optimality, which can be
computationally expensive.  We therefore seek a method that interleaves
progress in solving the LP with changes of the complementarity branch, without
requiring an exact full branch solve after every change.  Before presenting
such a method, we reinterpret each full branch LP as a penalty problem over a
fixed feasible polyhedron.  For a sufficiently large penalty parameter,
changing the complementarity branch changes only the linear objective, while
the feasible polyhedron remains unchanged.  This viewpoint also reveals the
connection with DCA.

To obtain the reformulation, remove only the complementarity constraints and
define the relaxed polyhedron
\[
    \cC:=\{x\in\R^n\mid
    Ax-a=0,\ Bx-b\ge0,\ x_1\ge0,\ x_2\ge0\}
\]
which contains every branch $\cOmega_J$.  Fix
$J=(D_1,D_2)\in\cP$.  Relative to $\cC$, the additional requirements defining
$\cOmega_J$ are $x_{1,i}=0$ for $i\in D_1$ and $x_{2,i}=0$ for $i\in D_2$.
Their aggregate violation is measured by the linear branch residual
\[
    \psi_J(x):=\sum_{i\in D_1}x_{1,i}+\sum_{i\in D_2}x_{2,i}.
\]
Because $x_1,x_2\ge0$ on $\cC$, the residual $\psi_J$ is nonnegative and
\[
    \cOmega_J
    =\{x\in\cC\mid\psi_J(x)=0\}
    =\{x\in\cC\mid\psi_J(x)\le0\}.
\]
The preceding identity suggests replacing the branch constraints by a penalty
and minimizing $c\T x+\rho\psi_J(x)$ over $\cC$.  Based on the classical
multiplier characterization of exact penalties for convex programs
\cite[Theorem~2.1]{Mangasarian1985ExactPenalty}, the next proposition shows
that, for a sufficiently large $\rho$, this penalized LP and the full LP on
$\cOmega_J$ have exactly the same solutions.

\begin{proposition}
\label{prop:lpcc-branch-penalty-equivalence}
Fix $J\in\cP$.  Suppose that the full branch LP
\[
    f_J^*=\min\{c\T x\mid x\in\cOmega_J\}
\]
is feasible and has a finite optimal value; that is,
$\cOmega_J\ne\varnothing$ and $f_J^*>-\infty$.
Then there exists a finite number $\rho_J\ge0$ such that, for every
$\rho>\rho_J$,
\[
    \argmin_{x\in\cC}\{c\T x+\rho\psi_J(x)\}
    =
    \argmin_{x\in\cOmega_J} c\T x.
\]
\end{proposition}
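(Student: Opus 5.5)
The plan is to use LP duality for the full branch LP, viewed as $\min\{c\T x\mid x\in\cC,\ \psi_J(x)\le0\}$. Since $\cOmega_J\ne\varnothing$ and $f_J^*>-\infty$, LP strong duality applies without any constraint qualification, because all constraints are affine. The Lagrangian here treats $\cC$ as the ground set and dualizes only the single scalar constraint $\psi_J(x)\le 0$. Concretely, write the full LP with all constraints explicit. Its dual has a multiplier $\lambda_i$ for each equality $x_{1,i}=0$ ($i\in D_1$) and $x_{2,i}=0$ ($i\in D_2$), together with multipliers for the constraints defining $\cC$. Take an optimal dual solution and set $\rho_J:=\max\{0,\max_i(-\lambda_i)\}$; the sign convention is chosen so that the relevant bound is $\rho\ge -\lambda_i$ for each branch equality. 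The claim $\min_{x\in\cC}\{c\T x+\rho\psi_J(x)\}=f_J^*$ for $\rho\ge\rho_J$ then follows from weak duality. The reason is that the dual solution, with the branch multipliers replaced by $\rho$, remains feasible for the dual of the penalized LP over $\cC$. Indeed, increasing the multiplier of a constraint $x_{1,i}\ge 0$ of $\cC$ absorbs the slack $\rho+\lambda_i\ge0$. This is exactly the Mangasarian exact-penalty mechanism \cite[Theorem~2.1]{Mangasarian1985ExactPenalty}.

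Next I will show the inclusion of argmin sets in both directions for $\rho>\rho_J$. First, if $x\in\argmin_{\cOmega_J}c\T x$, then $x\in\cC$ and $\psi_J(x)=0$. Hence its penalized value equals $f_J^*$, which by the previous step is the penalized minimum, so $x$ is a penalized minimizer.

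For the converse, let $x\in\cC$ minimize the penalized objective. The key estimate to establish is
\[
c\T x+\rho_J\psi_J(x)\ \ge\ f_J^*\qquad\forall x\in\cC,
\]
which is the weak-duality inequality at penalty level $\rho_J$ itself. Given this, the penalized minimality of $x$ gives
\[
f_J^*=c\T x+\rho\psi_J(x)=\bigl(c\T x+\rho_J\psi_J(x)\bigr)+(\rho-\rho_J)\psi_J(x)\ge f_J^*+(\rho-\rho_J)\psi_J(x).
\]
Since $\rho>\rho_J$ and $\psi_J\ge0$ on $\cC$, this forces $\psi_J(x)=0$. Therefore $x\in\cOmega_J$ with $c\T x=f_J^*$, so $x$ is a full-branch minimizer.

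I expect the main obstacle to be the bookkeeping in the duality step. The branch equalities $x_{1,i}=0$ overlap with the nonnegativity constraints already in $\cC$, so their multipliers are free in sign. I must check that $\rho\ge\rho_J$ indeed yields a feasible dual for the penalized problem. An alternative that avoids sign fiddling is to apply strong duality to the equivalent form with the single constraint $\psi_J(x)\le 0$ over the polyhedron $\cC$. Lagrangian duality for LPs over a polyhedral ground set still has zero gap and an attained multiplier $\mu\ge0$, and one simply takes $\rho_J=\mu$. I will use this cleaner route, justifying attainment by LP strong duality applied to the full explicit system.
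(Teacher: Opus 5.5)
Your proposal is correct and is essentially the paper's proof: dualize only the scalar constraint $\psi_J(x)\le0$ over $\cC$, and take $\rho_J$ to be an optimal LP multiplier $\lambda_J^*\ge0$, which exists by LP strong duality. Then the bound $c\T x+\rho\psi_J(x)\ge f_J^*+(\rho-\lambda_J^*)\psi_J(x)$ on $\cC$ forces $\psi_J=0$ at every penalized minimizer. Your first, fully explicit dual construction with $\rho_J=\max\{0,\max_i(-\lambda_i)\}$ is also valid, because the slack $\rho+\lambda_i\ge0$ is absorbed by the nonnegativity multipliers of $\cC$ exactly as you describe.
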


\begin{proof}
Since $\psi_J(x)\ge0$ on $\cC$, the full branch LP is equivalently
$f_J^*=\min\{c\T x\mid x\in\cC,\ \psi_J(x)\le0\}$.  Associate a multiplier
$\lambda\ge0$ with $\psi_J(x)\le0$.  Its partial Lagrangian is
$L_J(x,\lambda)=c\T x+\lambda\psi_J(x)$, and its dual function is
$g_J(\lambda)=\inf_{x\in\cC}\{c\T x+\lambda\psi_J(x)\}$.

Since the full branch LP is feasible and has a finite optimal value, LP strong
duality \cite[Sections~4.3 and~4.10]{BertsimasTsitsiklis1997} yields an
optimal multiplier $\lambda_J^*\ge0$ such that
$f_J^*=g_J(\lambda_J^*)=\inf_{x\in\cC}
\{c\T x+\lambda_J^*\psi_J(x)\}$.

Choose any $\rho>\lambda_J^*$.  For every $x\in\cC$,
\[
\begin{aligned}
    c\T x+\rho\psi_J(x)
    &=
    c\T x+\lambda_J^*\psi_J(x)
    +(\rho-\lambda_J^*)\psi_J(x)  \\
    &\ge
    f_J^*+(\rho-\lambda_J^*)\psi_J(x).
\end{aligned}
\]
If $x\notin\cOmega_J$, then $\psi_J(x)>0$ and hence
$c\T x+\rho\psi_J(x)>f_J^*$.  If $x\in\cOmega_J$, then $\psi_J(x)=0$ and
$c\T x+\rho\psi_J(x)=c\T x$.  The two problems therefore have the same
minimizers, and $\lambda_J^*$ is an admissible choice of $\rho_J$.
\end{proof}

\begin{remark}
\label{rem:lpcc-hoffman-penalty-bound}
Proposition~\ref{prop:lpcc-branch-penalty-equivalence} uses an optimal
multiplier to obtain an admissible threshold $\rho_J$.  Alternatively, a
sufficient threshold can be expressed in terms of the Hoffman constant of the
linear system defining $\cOmega_J$.  For every nonempty branch, Hoffman's
polyhedral error bound \cite{Hoffman1952,PenaVeraZuluaga2021} yields
$\kappa_J>0$ such that
\[
    \dist_2(x,\cOmega_J)\le\kappa_J\psi_J(x),
    \qquad x\in\cC.
\]
For $y=\proj_{\cOmega_J}(x)$, this gives
$c\T x\ge c\T y-\|c\|_2\|x-y\|_2
\ge f_J^*-\|c\|_2\kappa_J\psi_J(x)$ and hence
$c\T x+\rho\psi_J(x)\ge
f_J^*+(\rho-\|c\|_2\kappa_J)\psi_J(x)$.  Therefore
$\|c\|_2\kappa_J$ is another admissible choice of $\rho_J$: every
$\rho>\|c\|_2\kappa_J$ excludes points outside $\cOmega_J$ from the
penalized solution set, while the penalty vanishes on $\cOmega_J$.  Since
$\cP$ is finite, the single bound
\[
    \rho>
    \|c\|_2\max\{\kappa_J\mid
    J\in\cP,\ \cOmega_J\ne\varnothing\}
\]
is sufficient simultaneously for all nonempty branches.
In practice, useful estimates of the Hoffman constants $\kappa_J$, and
hence of a sufficient penalty threshold, are difficult to obtain.
\end{remark}

The branchwise exact-penalty construction above is closely related to
existing exact-penalty formulations for LPCCs.  Such formulations were studied
for programs with linear complementarity constraints in
\cite{mangasarian1997exact}, while the piecewise-linear LPCC penalty used below
was analyzed in \cite{JaraMoroniPangWachter2018LPCC}.  Following this latter
formulation, define
\[
    \Phi(x)
    :=\sum_{i=1}^m\min\{x_{1,i},x_{2,i}\},
    \qquad x\in\cC.
\]
For every $x\in\cC$, the branch residuals satisfy
$\Phi(x)=\min_{J\in\cP}\psi_J(x)$.  Hence
$\cOmega=\{x\in\cC\mid\Phi(x)=0\}$ and, for $x\in\cOmega$, the active affine
pieces are exactly the compatible branches:
$\argmin_{J\in\cP}\psi_J(x)=\cP(x)$.  For any $\rho>0$, consider the penalty
problem
\begin{equation}\label{eq:lpcc-piecewise-linear-penalty}
    \min_{x\in\cC}\ F_\rho(x),
    \qquad
    F_\rho(x):=c\T x+\rho\Phi(x)
    =\min_{J\in\cP}\{c\T x+\rho\psi_J(x)\}.
\end{equation}
\cite[Proposition~3]{JaraMoroniPangWachter2018LPCC}
establishes local
exactness: an LPCC-feasible point is a local minimizer of the LPCC if and only
if it is a local minimizer of the penalty problem for all sufficiently large
penalty parameters.

The piecewise-linear penalty has a natural DC structure since, for every
$i\in[m]$, $\min\{x_{1,i},x_{2,i}\}=\allowbreak
x_{1,i}+x_{2,i}-\max\{x_{1,i},x_{2,i}\}$.  It follows that
\[
    F_\rho(x)=G_\rho(x)-\rho H(x).
\]
Here $G_\rho(x):=c\T x+\rho\sum_{i=1}^m(x_{1,i}+x_{2,i})$ and
$H(x):=\sum_{i=1}^m\max\{x_{1,i},x_{2,i}\}$ are convex.  This places the
penalty problem within the standard DCA framework \cite{tao1997convex}.
The approach has been developed for complementarity penalties in LCPs
\cite{le2011solving} and LPCCs \cite{JaraMoroniPangWachter2018LPCC}.  At
$x^k$, DCA chooses
$\eta^k\in\partial H(x^k)$ and solves
\[
    \min_{x\in\cC}\{G_\rho(x)-\rho(\eta^k)\T x\}.
\]
To make this subproblem explicit, let $h(u,v):=\max\{u,v\}$.  If
$x^k\in\cOmega$, the components of $\eta^k$ associated with the $i$th
complementarity pair satisfy
\[
(\eta_{1,i}^k,\eta_{2,i}^k)\T
\in\partial h(x_{1,i}^k,x_{2,i}^k)
=
\begin{cases}
\{(1,0)\T\},&i\in\cI_{+0}(x^k),\\
\{(0,1)\T\},&i\in\cI_{0+}(x^k),\\
\{(1-\xi,\xi)\T\mid\xi\in[0,1]\},&i\in\cI_{00}(x^k).
\end{cases}
\]
Choosing $(\eta_{1,i}^k,\eta_{2,i}^k)=(1-\xi_i^k,\xi_i^k)$ at each biactive
index gives the auxiliary LP
\[
\begin{aligned}
    \min_{x\in\cC}\quad c\T x+\rho\Bigg(&
    \sum_{i\in\cI_{0+}(x^k)}x_{1,i}
    +\sum_{i\in\cI_{+0}(x^k)}x_{2,i}\\
    &+\sum_{i\in\cI_{00}(x^k)}
    \bigl(\xi_i^k x_{1,i}+(1-\xi_i^k)x_{2,i}\bigr)\Bigg).
\end{aligned}
\]
This auxiliary LP is equivalent to the formulation in
\cite[Section~4.1.2, equation~(12)]{JaraMoroniPangWachter2018LPCC}.
For the extreme choices $\xi_i^k\in\{0,1\}$, define
$J^k:=\bigl(
    \cI_{0+}(x^k)\cup
    \{i\in\cI_{00}(x^k)\mid\xi_i^k=1\},
    \cI_{+0}(x^k)\cup
    \{i\in\cI_{00}(x^k)\mid\xi_i^k=0\}
    \bigr)\in\cP(x^k)$.
The DCA subproblem then becomes
the subproblem in \RBLA{}:
\[
    \min_{x\in\cC}\{c\T x+\rho\psi_{J^k}(x)\}.
\]
Whenever
$\rho>\bar\rho:=\max\{\rho_J\mid
J\in\cP,\ \cOmega_J\ne\varnothing\}$,
Proposition~\ref{prop:lpcc-branch-penalty-equivalence} shows that this
auxiliary LP has the same solution set as the
branch LP on $\cOmega_{J^k}$.
Thus, at the subproblem level, Algorithm~\ref{alg:lpcc-rbla} samples a
compatible extreme subgradient and solves the corresponding DCA auxiliary LP
exactly.  The equivalence concerns only these subproblems: \RBLA{}
accepts strict improvement in the original objective and resamples after an
unsuccessful choice, whereas standard DCA has different update and stopping
rules.

\begin{remark}
\cite[Proposition~3]{JaraMoroniPangWachter2018LPCC} relates local minimizers
of the LPCC and the full disjunctive penalty only at LPCC-feasible points,
whereas fixing $J$ replaces the disjunctive residual
$\Phi=\min_{J'\in\cP}\psi_{J'}$ with the linear residual $\psi_J$, allowing
Proposition~\ref{prop:lpcc-branch-penalty-equivalence} to identify the global
minimizer sets without assuming a priori that a penalized minimizer is
feasible.
\end{remark}

\begin{remark}
Here $x\in\cC$ is d-stationary for $F_\rho$ on $\cC$ if
$F_\rho'(x;d)\ge0$ for every $d\in T_{\cC}(x)$.  For $x\in\cOmega$ and
$\rho>\bar\rho$,
\cite[Propositions~2 and~3]{JaraMoroniPangWachter2018LPCC}
show that $x$ is B-stationary for the
LPCC if and only if it is d-stationary for $F_\rho$ on $\cC$.
\end{remark}

\subsection{Randomized Penalty-objective Simplex search Algorithm}
\label{subsec:lpcc-simplex}

We now use the preceding fixed-polyhedron reformulation to construct an
inexact alternative to \RBLA{}.  For $J\in\cP(x)$ and sufficiently large
$\rho$, solving the full LP on $\cOmega_J$ is equivalent to minimizing
$\phi_J(u):=c\T u+\rho\psi_J(u)$ over $u\in\cC$.  Instead of solving the
penalty LP to optimality, the new method accepts the first visited vertex
$z\in\ext\cC$ satisfying $\phi_J(z)<\phi_J(x)$ and $\psi_J(z)=0$.  Since
$\psi_J(x)=0$, the accepted point lies in $\cOmega_J\subseteq\cOmega$ and
satisfies $c\T z<c\T x$.  A compatible branch is then sampled at the new
iterate; an unsuccessful search retains $x$ and samples again from $\cP(x)$.

The key computational point is that every penalty LP
$\min_{u\in\cC}\{c\T u+\rho\psi_J(u)\}$ has the same feasible polyhedron
$\cC$; changing $J$ changes only its linear objective.
Accordingly, after each objective change, \RPSA{} performs
primal-simplex pivots from the current iterate using a finitely terminating
pivot rule \cite{Bland1977Pivoting,BertsimasTsitsiklis1997}.  When
$\rho>\bar\rho:=\max\{\rho_J\mid J\in\cP,\ \cOmega_J\ne\varnothing\}$, where
$\rho_J$ is given by Proposition~\ref{prop:lpcc-branch-penalty-equivalence},
every penalty-LP optimizer lies in $\cOmega_J$.  Hence, if the sampled branch
admits improvement, the simplex search finds an acceptable vertex after
finitely many pivots, at the latest at a penalty-LP optimizer.  Implementation
details are reported in
Section~\ref{sec:numerical-experiments}, and the method is stated in
Algorithm~\ref{alg:lpcc-rpsa}.

\begin{breakablealgorithm}
\caption{Randomized Penalty-objective Simplex search Algorithm (\RPSA)}
\label{alg:lpcc-rpsa}
\begin{algorithmic}[1]
\Require feasible point $x^0\in\cOmega$ and penalty parameter
    $\rho>\bar\rho$
\Statex \textit{Initialization}
\If{$x^0\notin\ext\cC$}
    \State Choose any $J^{\rm init}\in\cP(x^0)$
    \State Replace $x^0$ by an optimal vertex of the full LP on
        $\cOmega_{J^{\rm init}}$
\EndIf
\State Store a primal-feasible basis of $\cC$ representing $x^0$
\Statex \textit{Randomized branch--simplex iterations}
\For{$k=0,1,2,\ldots$}
    \State Sample $\xi_i^k\in\{0,1\}$ conditionally independently and
        uniformly for $i\in\cI_{00}(x^k)$
    \State Form $J^k\gets(D_1^k,D_2^k)\in\cP(x^k)$, where
    \Statex \hspace{\algorithmicindent}%
        $D_1^k\gets\cI_{0+}(x^k)\cup
        \{i\in\cI_{00}(x^k)\mid\xi_i^k=1\}$ and
        $D_2^k\gets[m]\setminus D_1^k$
    \State Set $\phi_k(u)\gets c\T u+\rho\psi_{J^k}(u)$
    \State Take primal-simplex steps from the stored basis until
    \Statex \hspace{\algorithmicindent}%
        the first vertex $z$ with $\phi_k(z)<\phi_k(x^k)$ and
        $\psi_{J^k}(z)=0$ is found, or stop at penalty-LP optimality
    \If{an acceptable vertex $z$ is found}
        \State Set $x^{k+1}\gets z$ and store its current basis
    \Else
        \State Set $x^{k+1}\gets x^k$ and retain its stored basis
    \EndIf
\EndFor
\end{algorithmic}
\end{breakablealgorithm}

\begin{remark}
Lines~1--4 of Algorithm~\ref{alg:lpcc-rpsa} initialize a simplex vertex when
the supplied $x^0$ is not a vertex of $\cC$.  This step is well defined under
Assumption~\ref{ass:lpcc-bounded-polyhedra}.  For
$J^{\rm init}\in\cP(x^0)$, the branch $\cOmega_{J^{\rm init}}$ contains
$x^0$ and is a bounded subset of $\cOmega$.  Its full LP therefore has an
optimal vertex $v^0$ with $c\T v^0\le c\T x^0$
\cite{BertsimasTsitsiklis1997}.  Since
$\psi_{J^{\rm init}}\ge0$ on $\cC$, the set
$\cOmega_{J^{\rm init}}=\{x\in\cC\mid\psi_{J^{\rm init}}(x)=0\}$ is a face
of $\cC$.  Every vertex of a face is a vertex of the ambient polyhedron, so
$v^0\in\ext\cC$ and admits a primal-feasible basis
\cite{BertsimasTsitsiklis1997}.
\end{remark}

\begin{remark}
Although $\bar\rho$ is difficult to estimate, $\rho$ can be increased
adaptively as follows while retaining the almost-sure B-stationarity
guarantee. The analogous convergence proof is omitted.  In practice, one may start with a moderate
$\rho_0$ and increase it gradually.  If the sampled penalty LP is unbounded,
or if its optimal solution $\widehat u^k$ satisfies
$\psi_{J^k}(\widehat u^k)>0$, one may set $\rho\gets\pi\rho$ for some
$\pi>1$ and restart the same branch search.
\end{remark}

For \RPSA{}, we use the same probability space and filtration notation
$(\Xi,\mathcal G,\mathbb P,\{\mathcal F_k\}_{k\in\mathbb N})$ as in
Subsection~\ref{subsec:lpcc-exact-branch}, with $\mathcal F_k$ containing all
information available immediately before the draw at the $k$-th iteration. Thus,
$x^k$ and $\cP(x^k)$ are
$\mathcal F_k$-measurable, whereas $J^k$ is
$\mathcal F_{k+1}$-measurable. For the iterate sequence $\{x^k\}$ and sampled-label sequence
$\{J^k\}$ generated by \RPSA{}, we reuse the notation
\[
    A_k^J:=\{J\in\cP(x^k)\},\qquad E_k^J:=\{J^k=J\},
    \qquad J\in\cP.
\]
Here $A_k^J$ is the event that branch $J$ is compatible with $x^k$, and
$E_k^J$ is the event that branch $J$ is sampled at the $k$-th iteration.  Conditional on $\mathcal F_k$, the variables
$\{\xi_i^k\}_{i\in\cI_{00}(x^k)}$ are independent and uniform on $\{0,1\}$;
i.e., for every $J\in\cP$,
\begin{equation}\label{eq:lpcc-rpsa-conditional-uniform}
\mathbb P(E_k^J\mid\mathcal F_k)=\frac{\mathbf1_{A_k^J}}{|\cP(x^k)|}.
\end{equation}

\begin{lemma}
\label{lem:lpcc-rpsa-basic-stabilization}
Under Assumption~\ref{ass:lpcc-bounded-polyhedra}, each realization of
Algorithm~\ref{alg:lpcc-rpsa} (\RPSA{}) has an iterate sequence satisfying
\[
    x^k\in\ext\cC\cap\cOmega,
    \qquad
    c\T x^k\le c\T x^{k-1},
    \qquad k\ge1.
\]
The inequality is strict whenever $x^k\ne x^{k-1}$.  Every realization has a
finite, realization-dependent index $N$ such that
\[
    x^k=x^N,\qquad k\ge N.
\]
\end{lemma}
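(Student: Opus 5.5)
The plan is to prove the three claims in order: the invariant $x^k\in\ext\cC\cap\cOmega$, monotonicity with strictness, and finite stabilization. The last will follow from finiteness of $\ext\cC$.

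\emph{Invariant, by induction on $k$.} The base case is the initialization. If $x^0\notin\ext\cC$, the preceding remark shows that the replaced $x^0$ is an optimal vertex of the full LP on $\cOmega_{J^{\rm init}}$. That vertex lies in $\ext\cC$, because $\cOmega_{J^{\rm init}}$ is a face of $\cC$, and in $\cOmega_{J^{\rm init}}\subseteq\cOmega$. It also satisfies $c\T x^0_{\rm new}\le c\T x^0_{\rm old}$. Otherwise $x^0$ is already in $\ext\cC\cap\cOmega$. For the inductive step, suppose $x^k\in\ext\cC\cap\cOmega$. Since $J^k\in\cP(x^k)$ by construction, we have $x^k\in\cOmega_{J^k}$, so $\psi_{J^k}(x^k)=0$ and $\phi_k(x^k)=c\T x^k$. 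The stored basis is primal feasible for $\cC$ and represents $x^k$, so the simplex steps are well defined.

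There are two cases. If an acceptable vertex $z$ is found, then $z\in\ext\cC$, since primal simplex visits only basic feasible solutions. Moreover $\psi_{J^k}(z)=0$ gives $z\in\cOmega_{J^k}\subseteq\cOmega$, and
\[
c\T z=\phi_k(z)<\phi_k(x^k)=c\T x^k .
\]
If no acceptable vertex is found, then $x^{k+1}=x^k$ together with its stored basis. In both cases the invariant holds, $c\T x^{k+1}\le c\T x^k$, and the inequality is strict whenever $x^{k+1}\ne x^k$.

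\emph{Well-definedness of each simplex search.} I must also check that every iteration terminates. Under Assumption~\ref{ass:lpcc-bounded-polyhedra}, $\cC$ need not itself be bounded. However, the hypothesis $\rho>\bar\rho$ together with Proposition~\ref{prop:lpcc-branch-penalty-equivalence} gives that the penalty LP over $\cC$ has the same (nonempty, attained) solution set as the full LP on the nonempty compact branch $\cOmega_{J^k}$. In particular it has a finite optimal value. A finitely terminating pivot rule therefore stops after finitely many pivots, either at an acceptable vertex or at an optimal basis. This is the step I regard as the main subtlety: one must make sure the search cannot end in an unbounded ray, and that is exactly what the finite optimum excludes. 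The finite optimum comes from the penalty equivalence and the compactness of branches.

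\emph{Stabilization.} This mirrors Lemma~\ref{lem:lpcc-rbla-basic-stabilization}, with $\ext\cC$ playing the role of the value set. The polyhedron $\cC$ has finitely many vertices, so the set of values $\{c\T v\mid v\in\ext\cC\}$ is finite. The sequence $\{c\T x^k\}_{k\ge1}$ is nonincreasing and takes values in this finite set, so it is eventually constant from some realization-dependent index $N$. By the strictness statement, any change $x^{k+1}\ne x^k$ would strictly decrease $c\T x^k$. Hence $x^k=x^N$ for all $k\ge N$.
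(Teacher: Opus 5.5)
Your proof is correct and follows the paper's argument. The invariant is proved by induction using $\psi_{J^k}(x^k)=0$ and $c\T z=\phi_k(z)<\phi_k(x^k)=c\T x^k$, and stabilization comes from finiteness of $\ext\cC$. You phrase that last step via the finite value set $\{c\T v\mid v\in\ext\cC\}$ rather than via pairwise distinctness of accepted vertices, which is an equivalent minor variation. Your additional check that each simplex search terminates (using $\rho>\bar\rho$ and Proposition~\ref{prop:lpcc-branch-penalty-equivalence} to rule out an unbounded penalty LP) is valid. The paper does not put this check in the lemma's proof; it relies on the algorithm description and the proof of Theorem~\ref{thm:lpcc-rpsa-convergence} for it.
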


\begin{proof}
The initialization in Algorithm~\ref{alg:lpcc-rpsa} gives
$x^0\in\ext\cC\cap\cOmega$.  Suppose
$x^k\in\ext\cC\cap\cOmega$.  If the search is unsuccessful, then
$x^{k+1}=x^k$.  Otherwise, Algorithm~\ref{alg:lpcc-rpsa} accepts a candidate
vertex $z\in\ext\cC$ only when $\psi_{J^k}(z)=0$ and
$\phi_k(z)<\phi_k(x^k)$, and sets $x^{k+1}=z$.  Since
$z\in\cC$ and $\psi_{J^k}(z)=0$, we have
$z\in\cOmega_{J^k}\subseteq\cOmega$.  Hence
$x^{k+1}\in\ext\cC\cap\cOmega$, proving the first assertion by induction.
Moreover, $J^k\in\cP(x^k)$ gives $\psi_{J^k}(x^k)=0$, and therefore every
accepted $z$ satisfies
\[
    c\T z
    =
    \phi_k(z)
    <
    \phi_k(x^k)
    =
    c\T x^k.
\]
Thus $c\T x^{k+1}<c\T x^k$ whenever $x^{k+1}\ne x^k$.  Since $\cC$ is a
polyhedron defined by finitely many linear constraints, $\ext\cC$ is finite
\cite[Theorem~2.3 and Corollary~2.1]{BertsimasTsitsiklis1997}.  Strict
decrease makes all accepted vertices pairwise distinct.  Hence only finitely
many changes are possible; after the last one, the update rule keeps $x^k$
fixed.
\end{proof}

\begin{lemma}\label{lem:lpcc-rpsa-persistent-sampling}
Let $\{x^k\}$ and $\{J^k\}$ be, respectively, the iterate and sampled-label
sequences generated by Algorithm~\ref{alg:lpcc-rpsa} (\RPSA{}).  With
probability one, the following implication holds simultaneously for every
$N\in\mathbb N$:
\[
\begin{gathered}
    x^k=x^N\quad\forall k\ge N
    \quad\Longrightarrow\\[2pt]
    E_k^J\text{ occurs for infinitely many }k\ge N
    \quad\forall J\in\cP(x^N).
\end{gathered}
\]
\end{lemma}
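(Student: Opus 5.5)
The argument mirrors the proof of Lemma~\ref{lem:lpcc-finite-sampling}, with \eqref{eq:lpcc-rpsa-conditional-uniform} replacing \eqref{eq:lpcc-rbla-conditional-uniform}. The first step is to confirm that \eqref{eq:lpcc-rpsa-conditional-uniform} holds for the \RPSA{} sampling rule. Conditional on $\mathcal F_k$, the labels $J^k$ produced by lines~7--8 range bijectively over $\cP(x^k)$: each biactive index $i\in\cI_{00}(x^k)$ is placed in $D_1^k$ exactly when $\xi_i^k=1$, and the nonbiactive indices are fixed by $\cI_{0+}(x^k)$ and $\cI_{+0}(x^k)$. Because the $\xi_i^k$ are independent and uniform, each compatible label therefore has conditional probability $2^{-|\cI_{00}(x^k)|}=|\cP(x^k)|^{-1}$. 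Since $|\cP(x^k)|\le 2^m$, this gives $\mathbb P(E_k^J\mid\mathcal F_k)\ge 2^{-m}\mathbf1_{A_k^J}$ for every fixed $J\in\cP$.

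Next, fix $J\in\cP$. The indicator $\mathbf1_{A_k^J}$ is $\mathcal F_k$-measurable and $E_k^J\in\mathcal F_{k+1}$. On the event $\{A_k^J \text{ i.o.}\}$ we therefore have $\sum_k\mathbb P(E_k^J\mid\mathcal F_k)=\infty$. The conditional (L\'evy) Borel--Cantelli lemma \cite{Chen1978ConditionalBorelCantelli} then gives, almost surely, that $E_k^J$ occurs infinitely often on this event. We define $\mathcal E_{\rm ps}$ exactly as before, as the intersection over the $2^m$ labels of these almost-sure events, so $\mathbb P(\mathcal E_{\rm ps})=1$.

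Finally, fix a realization in $\mathcal E_{\rm ps}$ and any $N$ with $x^k=x^N$ for all $k\ge N$. Then every $J\in\cP(x^N)$ satisfies $A_k^J$ for all $k\ge N$, so $A_k^J$ occurs infinitely often. By the definition of $\mathcal E_{\rm ps}$, $E_k^J$ occurs infinitely often, and hence for infinitely many $k\ge N$. Because $\mathcal E_{\rm ps}$ does not depend on $N$, the implication holds simultaneously for all $N\in\mathbb N$.

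The only step needing care is the measurability and filtration bookkeeping for \RPSA{}. The simplex pivots performed at iteration $k$ and the stored basis depend on past draws, but they do not affect the conditional law of $\xi^k$ given $\mathcal F_k$. That law is fixed by the algorithm's specification, so the conditional Borel--Cantelli argument transfers unchanged.
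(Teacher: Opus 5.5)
Your proposal is correct and follows the paper's proof. The paper simply notes that the conditional sampling rules \eqref{eq:lpcc-rbla-conditional-uniform} and \eqref{eq:lpcc-rpsa-conditional-uniform} coincide, and then reuses the conditional Borel--Cantelli argument of Lemma~\ref{lem:lpcc-finite-sampling} verbatim; your check that the $\xi$-draws induce a bijection onto $\cP(x^k)$ spells out what the paper states directly, and the paper calls the resulting event $\mathcal E_{\rm ps}^{\rm R}$, the name used in Theorem~\ref{thm:lpcc-rpsa-convergence}.
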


\begin{proof}
The conditional sampling rules
\eqref{eq:lpcc-rbla-conditional-uniform} and
\eqref{eq:lpcc-rpsa-conditional-uniform} are identical.  Therefore the proof
of Lemma~\ref{lem:lpcc-finite-sampling}, with the sequences generated by
\RPSA{} in place of those generated by \RBLA{}, applies verbatim.  Let
$\mathcal E_{\rm ps}^{\rm R}$ denote the resulting probability-one event on
which the displayed implication holds for every $N\in\mathbb N$.

\end{proof}

\begin{theorem}
\label{thm:lpcc-rpsa-convergence}
Let $\{x^k\}$ be generated by Algorithm~\ref{alg:lpcc-rpsa} (\RPSA{}).
Under Assumption~\ref{ass:lpcc-bounded-polyhedra}, suppose
$\rho>\bar\rho:=\max\{\rho_J\mid J\in\cP,\ \cOmega_J\ne\varnothing\}$, where
each $\rho_J$ is defined in
Proposition~\ref{prop:lpcc-branch-penalty-equivalence}, and let
$x^\infty:=x^N$, where
$N$ is as in Lemma~\ref{lem:lpcc-rpsa-basic-stabilization}.  Then $x^\infty$ is
B-stationary, and hence a local minimizer of the LPCC
\eqref{prob:standard-lcc}, almost surely.
\end{theorem}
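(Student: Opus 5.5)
The argument mirrors the proof of Theorem~\ref{thm:lpcc-rbla-convergence}, with the exact branch solve replaced by the finitely terminating simplex search. We work on the probability-one event $\mathcal E_{\rm ps}^{\rm R}$ of Lemma~\ref{lem:lpcc-rpsa-persistent-sampling}. Fix a realization in this event and let $N$ be the stabilization index of Lemma~\ref{lem:lpcc-rpsa-basic-stabilization}, so that $x^k=x^\infty$ for all $k\ge N$. By that lemma, $x^\infty\in\ext\cC\cap\cOmega$ and $\cP(x^k)=\cP(x^\infty)$ for $k\ge N$. Lemma~\ref{lem:lpcc-rpsa-persistent-sampling} then guarantees that every $J\in\cP(x^\infty)$ is sampled at infinitely many iterations $k\ge N$.

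We argue by contradiction and suppose $x^\infty$ is not B-stationary. Assumption~\ref{ass:lpcc-bounded-polyhedra} ensures every nonempty branch is compact, so the certificate \eqref{eq:lpcc-full-branch-certificate} applies and yields $\widetilde J\in\cP(x^\infty)$ with $f_{\widetilde J}^*<c\T x^\infty$. Take an iteration $k\ge N$ at which $J^k=\widetilde J$. We claim the simplex search at this iteration must return an acceptable vertex, so that $x^{k+1}\ne x^k$, contradicting stabilization.

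To verify the claim, first note that the stored basis represents $x^k$, which is a vertex of $\cC$, and that $\cC\subseteq\{x\mid x_1,x_2\ge0\}$ together with Assumption~\ref{ass:lpcc-bounded-polyhedra}\dots. Here is a subtlety: boundedness of $\cOmega$ need not imply boundedness of $\cC$, so the penalty LP over $\cC$ might a priori be unbounded. This is handled by Proposition~\ref{prop:lpcc-branch-penalty-equivalence}, since $\rho>\bar\rho\ge\rho_{\widetilde J}$. That proposition gives $\phi_k(u)\ge f_{\widetilde J}^*$ on $\cC$, so $\phi_k$ is bounded below on $\cC$. Its minimizer set is nonempty and equals $\argmin_{\cOmega_{\widetilde J}}c\T x$.

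Since the pivot rule terminates finitely and $\phi_k$ is bounded below, the primal simplex either stops early at an acceptable vertex or reaches an optimal basic vertex $\widehat u$. In the latter case $\widehat u\in\argmin_{\cOmega_{\widetilde J}}c\T x$, so $\psi_{\widetilde J}(\widehat u)=0$. Moreover
\[
\phi_k(\widehat u)=f_{\widetilde J}^*<c\T x^\infty=\phi_k(x^k),
\]
because $\psi_{\widetilde J}(x^k)=0$. Hence $\widehat u$ is itself acceptable. In either case the search succeeds, which proves the claim.

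It follows that every realization in $\mathcal E_{\rm ps}^{\rm R}$ yields a B-stationary $x^\infty$. Local minimality then follows from Proposition~\ref{prop:convex-bstationarity-local-minimum}, applied with the linear (hence convex) objective.

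The main obstacle is the step just described: showing that the inexact search cannot fail on a descent branch. This requires both the penalty threshold, to keep every penalty-LP optimizer inside $\cOmega_{\widetilde J}$ and to exclude unboundedness over the possibly unbounded $\cC$, and the finite termination of the pivot rule under degeneracy. I would state the pivot-rule assumption, for example Bland's rule, explicitly at that point.
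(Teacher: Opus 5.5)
Your proposal is correct and follows essentially the same route as the paper's proof. You work on the persistent-sampling event, take a descent label $\widetilde J$ from the full-branch certificate, and use Proposition~\ref{prop:lpcc-branch-penalty-equivalence} to show that the finitely terminating simplex search must reach an acceptable vertex (at the latest a penalty-LP optimizer), contradicting stabilization; you also make explicit that the penalty LP cannot be unbounded over a possibly unbounded $\cC$, which the paper leaves implicit in its statement that the sampled penalty LP has optimal value $f^*_{\widetilde J}$.
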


\begin{proof}
Recall the event $\mathcal E_{\rm ps}^{\rm R}$ defined in the proof of
Lemma~\ref{lem:lpcc-rpsa-persistent-sampling}, which satisfies
$\mathbb P(\mathcal E_{\rm ps}^{\rm R})=1$.  Fix a realization in
$\mathcal E_{\rm ps}^{\rm R}$, and let $N$ be a stabilization index supplied
by Lemma~\ref{lem:lpcc-rpsa-basic-stabilization}.  Then
$x^k=x^\infty=x^N$ for every $k\ge N$, so every
$J\in\cP(x^\infty)$ remains compatible, so $A_k^J$ occurs for every
$k\ge N$; by the definition of $\mathcal E_{\rm ps}^{\rm R}$, $E_k^J$ occurs
infinitely often.

Suppose that $x^\infty$ is not B-stationary.  The certificate
\eqref{eq:lpcc-full-branch-certificate} then gives
$\widetilde J\in\cP(x^\infty)$ such that
$f_{\widetilde J}^*<c\T x^\infty$.  Whenever $E_k^{\widetilde J}$ occurs with $k\ge N$, Proposition~\ref{prop:lpcc-branch-penalty-equivalence}
shows that the sampled penalty LP has optimal value
$f_{\widetilde J}^*<c\T x^\infty=\phi_k(x^\infty)$ and that every optimizer
lies in $\cOmega_{\widetilde J}$.  By line~10 of
Algorithm~\ref{alg:lpcc-rpsa}, the finitely terminating simplex search visits
successive vertices until it finds an acceptable one or reaches an optimal
vertex $v$.  In the latter case,
$\psi_{\widetilde J}(v)=0$ and
$\phi_k(v)=f_{\widetilde J}^*<\phi_k(x^\infty)$, so $v$ satisfies both tests
in line~10 and is accepted.  Hence \RPSA{} changes the iterate, contradicting
eventual constancy.  Thus $x^\infty$ is B-stationary
for every realization in $\mathcal E_{\rm ps}^{\rm R}$.  Since
$\mathbb P(\mathcal E_{\rm ps}^{\rm R})=1$, the claimed almost-sure conclusion
follows.  Local minimality follows from
Proposition~\ref{prop:convex-bstationarity-local-minimum}.
\end{proof}

\section{Thresholded Branch Trust-Region Method for QPCCs}
\label{sec:qpcc-linearized}

This section considers the QPCC case
$f(x)=\frac12x\T Qx+q\T x$, where $Q=Q\T$ may be indefinite.
We present the Thresholded Branch Trust-Region Algorithm (\TBTA) and
establish its almost-sure B-stationarity guarantees. We use the following
compactness assumption.

\begin{assumption}\label{ass:compact}
The set
\[
    \cL(x^0):=
    \cOmega\cap\{x\in\mathbb R^n\mid f(x)\le f(x^0)\}
\]
is compact.
\end{assumption}

\subsection{\TBTA: Thresholded Branch Trust-Region Algorithm}
\label{sec:threshold-method}

To find a B-stationary point by a direct extension of the exact branch method,
one would solve, for each sampled branch $\cOmega_J$, the branch QP
\[
    \min_{x\in\cOmega_J} f(x),
    \qquad
    f(x):=\frac12x\T Qx+q\T x,
\]
to optimality before changing branches.
We use a trust-region technique with the first-order objective model
$f(y)+\grad f(y)\T d$ \cite{conn2000trust,nocedal2006numerical}.
On each selected branch, retaining the linear constraints and imposing
an infinity-norm trust region yields the following LP subproblem.

For a base point $y\in\R^n$, a branch $\cOmega_J$ indexed by
$J=(D_1,D_2)\in\cP$, and $\Delta>0$, let $\LP(y,J,\Delta)$ denote
the linear program
\begin{equation}\label{eq:cross-lp}
\begin{aligned}
    \LP(y,J,\Delta):\qquad
    \min_{d\in\R^n}\quad
        & \grad f(y)\T d \\
    \text{s.t.}\quad
        & A(y+d)-a=0, \\
        & B(y+d)-b\ge0, \\
        & y_{1,i}+d_{1,i}=0,\quad y_{2,i}+d_{2,i}\ge0,
          && i\in D_1, \\
        & y_{1,i}+d_{1,i}\ge0,\quad y_{2,i}+d_{2,i}=0,
          && i\in D_2, \\
        & \|d\|_\infty\le \Delta.
\end{aligned}
\end{equation}
We solve \eqref{eq:cross-lp} to optimality, or detect infeasibility,
using a standard LP solver (Gurobi in our implementation).
A natural inexact strategy is therefore to use \eqref{eq:cross-lp} for the
branch search in place of an exact bQP solve, and to sample another branch
once a step is accepted.  Although this reduces every branch search to an LP,
it may miss descent branches required for B-stationarity.  At a finite
iterate, a complementarity pair with one nonzero component admits only one
compatible assignment; if that component converges to zero, the pair becomes
biactive and additional branches appear only at the limit.  Consequently, a
generated sequence may satisfy
\[
    x^k\to\bar x,
    \qquad
    \cI_{00}(x^k)\subsetneq\cI_{00}(\bar x),
    \qquad
    \cP(x^k)\subsetneq\cP(\bar x).
\]
In this case, a label
$J_{\rm dec}\in\cP(\bar x)\setminus\cP(x^k)$ whose branch may decrease the
objective cannot be sampled at the current iterate.

\begin{example}\label{ex:thresholding-toy}
Consider
\[
    \min_{x,y}\ x^2-y
    \quad\text{s.t.}\quad
    0\le x\perp y\ge0,\qquad x\le1,\quad y\le1.
\]
The feasible sequence $z^k=(2^{-k},0)$ converges to $\bar z=(0,0)$.  Only the
branch $y=0$ is compatible at every $z^k$, whereas at $\bar z$ the additional
branch $x=0$ becomes compatible and admits the descent direction $(0,1)$,
with directional derivative $-1$.  Hence $\bar z$ is not B-stationary,
although this descent branch is unavailable at every finite iterate.
\end{example}

The example shows that sampling only from $\cP(x^k)$ may fail to examine a
branch that becomes compatible only at an accumulation point.  To expose
such branches earlier, we introduce thresholded auxiliary sampling.  A
nonbiactive complementarity pair whose nonzero component is small is treated
as biactive solely for generating an auxiliary branch.  Fix $\tau>0$, let
$x=(x_0,x_1,x_2)\in\cOmega$, and define
\[
    \cS_\tau(x)
    :=
    \left\{i\notin\cI_{00}(x)\;\middle|\;
    \|(x_{1,i},x_{2,i})\|\le \tau\right\}.
\]
The thresholded vector $T_\tau(x)$ is
\[
    (T_\tau(x))_0=x_0,\qquad
    (T_\tau(x))_{j,i}=
    \begin{cases}
        0, & i\in\cS_\tau(x),\\
        x_{j,i}, & i\notin\cS_\tau(x),
    \end{cases}
    \quad j=1,2.
\]
The vector $T_\tau(x)$ is used only to enlarge the set of candidate branches
and need not belong to $\cOmega$.

At the $k$-th iteration, the method first samples a primary branch
$J^k\in\cP(x^k)$ and applies the linearized trust-region search from $x^k$.
If this search has positive predicted decrease and
$\cS_\tau(x^k)\ne\varnothing$, it also samples an auxiliary branch
$\widehat J^k\in\cP(T_\tau(x^k))$ and applies the same search from $x^k$.
The auxiliary trial is selected only if its search succeeds and its original
objective value is smaller than that of the primary trial; otherwise the
primary trial is retained.  Hence branch assignments associated with nearly
biactive pairs can be tested before they become active at a limit point.  The
ratio test described next converts predicted descent from either search into
actual objective decrease.

To express the repeated solution of the linearized branch LP, the ratio test,
and the radius updates compactly, we use Subalgorithm~\ref{alg:lbtrs} for
both the primary and auxiliary searches.  Given a base point $y$, a branch
$\cOmega_J$, and an initial radius $\delta_0$, the subalgorithm successively
solves $\LP(y,J,\delta_\ell)$.  For an optimal step $d_\ell$ with positive
predicted decrease, set
\[
    \pred_\ell:=-\grad f(y)\T d_\ell,
    \qquad
    \ared_\ell:=f(y)-f(y+d_\ell),
    \qquad
    \varrho_\ell:=\frac{\ared_\ell}{\pred_\ell}.
\]
The identity
$f(y+d_\ell)-f(y)=\grad f(y)\T d_\ell+\frac12d_\ell\T Qd_\ell$
gives $\varrho_\ell=1-\frac{d_\ell\T Qd_\ell}{2\pred_\ell}$.
For a descent branch, contracting the radius
makes the quadratic model error negligible relative to the predicted
decrease, so $\varrho_\ell\to1$.  This motivates accepting the trial when
$\varrho_\ell\ge\eta_1$ and contracting the radius otherwise.

Fix the \LBTRS{} parameters $0<\eta_1\le\eta_2<1$,
$0<\gamma_{\rm dec}<1<\gamma_{\rm inc}$, and $\Delta_{\min}>0$.
A rejected trial uses
$\delta_{\ell+1}=\gamma_{\rm dec}\delta_\ell$.  A successful call returns
the accepted point together with the next initial radius
\begin{equation}\label{eq:radius-update}
    \delta^+ =
    \begin{cases}
    \max\{\Delta_{\min},\gamma_{\rm inc}\delta_\ell\},
        & \pred_\ell>0,
          \varrho_\ell\ge\eta_2,
          \text{ and }\|d_\ell\|_\infty=\delta_\ell,\\[1mm]
    \max\{\Delta_{\min},\delta_\ell\},
        & \text{otherwise}.
    \end{cases}
\end{equation}

\begin{breakablesubalgorithm}
\caption{Linearized branch trust-region search (\LBTRS)}
\label{alg:lbtrs}
\begin{algorithmic}[1]
\Statex \textbf{Input:} base point \(y\in\R^n\) (not necessarily in
    \(\cOmega_J\)), label
    \(J\), and radius
    \(\delta_0\ge\Delta_{\min}\)
\Statex \textbf{Output:} status \(\mathsf{success}\), \(\mathsf{zero}\),
    or \(\mathsf{failure}\); on \(\mathsf{success}\), a point
    \(y^+\in\cOmega_J\) and radius \(\delta^+\)
\For{\(\ell=0,1,2,\ldots\)}
    \State Solve \(\LP(y,J,\delta_\ell)\), obtaining an optimal step
        \(d_\ell\), or certify infeasibility
    \If{the LP is infeasible}
        \State \Return \(\mathsf{failure}\)
    \EndIf
    \State Set \(\pred_\ell=-\grad f(y)\T d_\ell\)
    \If{\(\ell=0\) and \(\pred_\ell=0\)}
        \State \Return \(\mathsf{zero}\)
    \ElsIf{\(\pred_\ell\le0\)}
        \State \Return \(\mathsf{failure}\)
    \EndIf
    \State Set \(y_\ell=y+d_\ell\),
        \(\ared_\ell=f(y)-f(y_\ell)\), and
        \(\varrho_\ell=\ared_\ell/\pred_\ell\)
    \If{\(\varrho_\ell\ge\eta_1\)}
        \State Set \(\delta^+\) by \eqref{eq:radius-update}
        \State \Return \(\mathsf{success}\) with point \(y^+=y_\ell\)
            and radius \(\delta^+\)
    \EndIf
    \State Set \(\delta_{\ell+1}=\gamma_{\rm dec}\delta_\ell\)
\EndFor
\end{algorithmic}
\end{breakablesubalgorithm}

A successful call returns a point $y^+\in\cOmega_J$ with $f(y^+)<f(y)$.
A failed call means only that this branch search produced no acceptable trial;
in particular, failure of an auxiliary search is not a stationarity
certificate.  For a compatible primary branch, the call returns either
$\mathsf{zero}$ or $\mathsf{success}$; see
Corollary~\ref{cor:primary-well-defined}.  Combining \LBTRS{} with the primary
and auxiliary sampling rules gives the complete method below.

\begin{breakablealgorithm}
\caption{Thresholded Branch Trust-Region Algorithm (\TBTA)}
\label{alg:threshold-tr}
\begin{algorithmic}[1]
\Require feasible \(x^0\in\cOmega\), radius
  \(\Delta_0\)%
    \(\ge\Delta_{\min}\), threshold \(\tau>0\), and the
    \LBTRS{} constants
\For{\(k=0,1,2,\ldots\)}
    \State Choose \(J^k\) conditionally uniformly from \(\cP(x^k)\)
    \State \textbf{Primary search:} Apply Subalgorithm~\ref{alg:lbtrs} to
        \((x^k,J^k,\Delta_k)\)
    \If{the primary search returns \(\mathsf{zero}\)}
        \If{\(\cI_{00}(x^k)=\varnothing\)}
            \State \Return \(x^k\)
        \EndIf
        \State Set \((x^{k+1},\)%
           \(\Delta_{k+1}\)%
            \()=(x^k,\Delta_k)\)
        \State \textbf{continue} with iteration \(k+1\)
    \EndIf
    \State Denote the returned point and radius by
        \((x_P^k,\)%
      \(\Delta_P^k\)%
        \()\)
    \State Set \((x_A^k,\)%
     \(\Delta_A^k\)%
        \()=(x^k,\Delta_k)\) and
        \(\cS_\tau^k=\cS_\tau(x^k)\)
    \If{\(\cS_\tau^k\ne\varnothing\)}
        \State Choose \(\widehat J^k\) conditionally uniformly from
            \(\cP(T_\tau(x^k))\)
        \State \textbf{Auxiliary search:} Apply Subalgorithm~\ref{alg:lbtrs}
            to \((x^k,\widehat J^k,\Delta_k)\)
        \If{the auxiliary search is successful}
            \State Set \((x_A^k,\)%
           \(\Delta_A^k\)%
                \()\) to its returned point and radius
        \EndIf
    \EndIf
    \If{\(f(x_A^k)<f(x_P^k)\)}
        \State Set \((x^{k+1},\)%
        \(\Delta_{k+1}\)%
            \()=(x_A^k,\)%
          \(\Delta_A^k\)%
            \()\)
    \Else
        \State Set \((x^{k+1},\)%
        \(\Delta_{k+1}\)%
            \()=(x_P^k,\)%
          \(\Delta_P^k\)%
            \()\)
    \EndIf
\EndFor
\end{algorithmic}
\end{breakablealgorithm}

\begin{remark}\label{rem:primary-call}
Let $d_0^k$ be an optimal step of the initial primary LP
$\LP(x^k,J^k,\Delta_k)$.  Since $J^k\in\cP(x^k)$, the zero step is feasible,
so $-\grad f(x^k)\T d_0^k\ge0$.  A $\mathsf{zero}$ return means equality and
certifies first-order stationarity on the sampled branch.  If
$\cI_{00}(x^k)=\varnothing$, this is the unique compatible branch, so
Proposition~\ref{prop:branch-lp-certificate} gives B-stationarity.  If instead
$-\grad f(x^k)\T d_0^k>0$, the \LBTRS{} in the primary search terminates
successfully after finitely many inner iterations, as shown in
Corollary~\ref{cor:primary-well-defined}.
\end{remark}

\subsection{Convergence Analysis}
\label{sec:threshold-convergence}

For the convergence analysis of Algorithm~\ref{alg:threshold-tr} (\TBTA{}),
all random quantities in this subsection are defined on a common probability space
$(\Xi,\mathcal G,\mathbb P)$.  Here $\Xi$ consists of the possible primary
and auxiliary branch-selection sequences, $\mathcal G$ is the sigma-algebra
generated by these selections, and $\mathbb P$ is their probability law.  Let
$\{\mathcal F_k^P\}_{k\in\mathbb N}$ and
$\{\mathcal F_k^A\}_{k\in\mathbb N}$ be the filtrations representing the
information available immediately before the primary draw and the potential
auxiliary draw at the $k$-th iteration, respectively.  They satisfy
$\mathcal F_k^P\subseteq\mathcal F_k^A\subseteq\mathcal F_{k+1}^P$.
In particular, $x^k$ and $\cP(x^k)$ are $\mathcal F_k^P$-measurable, while
the primary draw, the primary-stage computations, the decision whether to
perform the auxiliary draw, and $\cP(T_\tau(x^k))$ are
$\mathcal F_k^A$-measurable.  The auxiliary draw and the remaining outputs of
iteration $k$ are $\mathcal F_{k+1}^P$-measurable.  Conditional on the
corresponding history, every performed draw is uniform.  Thus, for each
$J\in\cP$,
\begin{equation}\label{eq:tbta-primary-uniform}
\mathbb P(J^k=J\mid\mathcal F_k^P)
=
\begin{cases}
|\cP(x^k)|^{-1},&J\in\cP(x^k),\\
0,&J\notin\cP(x^k).
\end{cases}
\end{equation}
When no auxiliary draw is performed, define $\widehat J^k$ arbitrarily.  Let
$D_k^A\in\mathcal F_k^A$ denote the event that the auxiliary draw is
performed at the $k$-th iteration.  On this event,
\begin{equation}\label{eq:tbta-auxiliary-uniform}
\mathbb P(\widehat J^k=J\mid\mathcal F_k^A)
=
\begin{cases}
|\cP(T_\tau(x^k))|^{-1},&J\in\cP(T_\tau(x^k)),\\
0,&J\notin\cP(T_\tau(x^k)).
\end{cases}
\end{equation}
Probability-one statements below refer to $\mathbb P$.

The main convergence result is stated next; its proof is completed after the
supporting results.

\begin{theorem}\label{thm:threshold-bstationary}
Let $\{x^k\}$ be generated by Algorithm~\ref{alg:threshold-tr} (\TBTA{})
with fixed $\tau>0$ and $\Delta_{\min}>0$.  Suppose
Assumption~\ref{ass:compact} holds.
\begin{enumerate}[label=(\roman*)]
    \item If the algorithm terminates, its returned point is B-stationary.
    \item If the iteration is infinite, then $\{x^k\}$ has accumulation points and,
    with probability one, every accumulation point is B-stationary.
\end{enumerate}
\end{theorem}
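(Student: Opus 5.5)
The plan is to treat the two parts separately. Part (i) follows directly from the termination rule. Part (ii) combines monotone decrease with a uniform-decrease argument near a non-B-stationary accumulation point and a conditional Borel--Cantelli argument in the style of Lemma~\ref{lem:lpcc-finite-sampling}.

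For (i), \TBTA{} returns only when the primary search returns $\mathsf{zero}$ and $\cI_{00}(x^k)=\varnothing$. Then $\cP(x^k)=\{J^k\}$ and $p_{J^k}(x^k,\Delta_k)=0$, so Proposition~\ref{prop:branch-lp-certificate}(iii) with $\Delta_0=\Delta_k$ gives B-stationarity.

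For (ii), every accepted point lies in some $\cOmega_J\subseteq\cOmega$ and satisfies $f(x^{k+1})<f(x^k)$, or else $x^{k+1}=x^k$. Hence $\{x^k\}\subseteq\cL(x^0)$, and accumulation points exist by Assumption~\ref{ass:compact}. Also $f(x^k)\downarrow f^\star>-\infty$, so the decreases are summable; this gives, for every $\epsilon>0$, only finitely many iterations with $f(x^k)-f(x^{k+1})\ge\epsilon$.

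If the iterates change only finitely often, they equal some $x^N$ with $\cI_{00}(x^N)\neq\varnothing$ for all $k\ge N$. By the argument of Lemma~\ref{lem:lpcc-finite-sampling} applied to \eqref{eq:tbta-primary-uniform}, every $J\in\cP(x^N)$ is sampled as the primary branch infinitely often. A non-B-stationary $x^N$ would have some $J_{\rm dec}$ with $p_{J_{\rm dec}}(x^N,\Delta_N)>0$ by Proposition~\ref{prop:branch-lp-certificate}. The primary search on $J_{\rm dec}$ then succeeds by Corollary~\ref{cor:primary-well-defined}, which is a contradiction. So we may assume infinitely many changes.

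Now let $\bar x$ be an accumulation point that is not B-stationary, with $J_{\rm dec}\in\cP(\bar x)$ from Proposition~\ref{prop:branch-lp-certificate}. The proof rests on two deterministic facts, which I would establish first.

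\begin{enumerate}[label=(\alph*)]
\item \emph{Thresholding exposes $\cP(\bar x)$.} For $x\in\cOmega$ close enough to $\bar x$, $\cP(\bar x)\subseteq\cP(T_\tau(x))\cup\cP(x)$. More precisely, $\cP(\bar x)\subseteq\cP(T_\tau(x))$ whenever $\cS_\tau(x)\ne\varnothing$, and $\cP(\bar x)\subseteq\cP(x)$ otherwise.
\begin{itemize}
\item A pair in $\cI_{00}(\bar x)$ has $\|(x_{1,i},x_{2,i})\|<\tau$ for $x$ near $\bar x$. It is therefore either biactive at $x$ or lies in $\cS_\tau(x)$, and is zeroed in $T_\tau(x)$.
\item A pair that is nonbiactive at $\bar x$ keeps the sign pattern of $\bar x$ at $x$ nearby, unless it is thresholded, which only enlarges the label set.
\item If $\cS_\tau(x)=\varnothing$ near $\bar x$, then every $i\in\cI_{00}(\bar x)$ is biactive at $x$ and $\cP(\bar x)\subseteq\cP(x)$.
\end{itemize}

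\item \emph{Uniform decrease.} There exist a neighbourhood $U$ of $\bar x$ and $\epsilon>0$ such that, for every $x\in U\cap\cOmega$ and every $\delta\ge\Delta_{\min}$, an \LBTRS{} call on $(x,J_{\rm dec},\delta)$ succeeds with $f(x)-f(x^+)\ge\epsilon$. The call is made from $x$ even though possibly $x\notin\cOmega_{J_{\rm dec}}$. The argument has four steps.
\begin{itemize}
\item The map $x\mapsto\cOmega_{J_{\rm dec}}\cap(x+[-\delta,\delta]^n)$ is Lipschitz on its domain (Hoffman's bound, as in Remark~\ref{rem:lpcc-hoffman-penalty-bound}). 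Hence the optimal value of $\LP(x,J_{\rm dec},\delta)$ is continuous near $\bar x$ for each fixed $\delta>0$, and feasibility persists near $\bar x$ for each $\delta>0$.
\item The feasible sets are convex and contain $d=\bar x-x+0$. Scaling a feasible step of radius $\bar\delta$ toward it gives a lower bound on the predicted decrease of the form $\pred(\delta)\ge\tfrac{\delta}{\bar\delta}\,p_{J_{\rm dec}}(\bar x,\bar\delta)-O(\|x-\bar x\|)$.
\item The ratio satisfies $1-\varrho\le \tfrac{n\|Q\|_2\delta^2}{2\,\pred(\delta)}$, so $\varrho\ge\eta_1$ once $\delta\le c\,p_{J_{\rm dec}}(\bar x,\bar\delta)$.
\item With $\delta_0\ge\Delta_{\min}$ and geometric contraction, the accepted radius is bounded below, so $\ared\ge\eta_1\pred\ge\epsilon$.
\end{itemize}
\end{enumerate}

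Since $x^{k+1}$ is chosen with $f(x^{k+1})\le\min\{f(x_P^k),f(x_A^k)\}$, fact (b) gives $f(x^k)-f(x^{k+1})\ge\epsilon$ whenever $x^k\in U$ and the relevant search (primary or auxiliary) draws $J_{\rm dec}$.

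\emph{Probabilistic step.} Conditional on $\mathcal F_k^P$ with $x^k\in U$, the probability of drawing $J_{\rm dec}$ is at least $2^{-m}$ on the appropriate search. When $\cS_\tau(x^k)\neq\varnothing$, the auxiliary draw is performed only if the primary search does not return $\mathsf{zero}$. I would handle iterations with a $\mathsf{zero}$ primary by noting that they leave $x^k$ unchanged and resample. So among the repeated visits to the same state, the primary eventually returns nonzero; otherwise, by the stalled-case argument above, $x^k$ would be B-stationary and stop moving, contradicting infinitely many changes. This step is the one I expect to be the main obstacle, because $\bar x$ and $U$ are realization-dependent and uncountably many.

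To make the probabilistic step rigorous, I would discretize. Cover the compact set of non-B-stationary points of $\cL(x^0)$ by countably many rational balls $U_j$ with associated labels $J_j$ and constants $\epsilon_j$ from facts (a) and (b). For each $j$ apply the conditional Borel--Cantelli lemma \cite{Chen1978ConditionalBorelCantelli}: almost surely, if $x^k\in U_j$ infinitely often, then a decrease of at least $\epsilon_j$ occurs infinitely often. Summability of the decreases rules this out. A countable intersection of probability-one events then shows that, almost surely, no accumulation point is non-B-stationary.
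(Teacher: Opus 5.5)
Your proposal is correct and follows essentially the paper's route. The shared ingredients are uniform \LBTRS{} decrease from base points near a non-B-stationary $\bar x$ that need not lie in the branch (Lemma~\ref{lem:cross-uniform}), thresholding to expose $\cP(\bar x)$ (Lemma~\ref{lem:one-sided}), the block-exit observation that forces auxiliary draws to be performed (Lemma~\ref{lem:new-biactive}), and conditional Borel--Cantelli over a countable family. Your rational balls stand in for the sets $\mathcal U_{J,r}$, and your split on $\cS_\tau(x^k)=\varnothing$ replaces the split on $\cI_{00}(x^{k_j})=\cI_{00}(\bar x)$.

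Two points need correcting. First, for the auxiliary search the bound $2^{-m}$ holds conditionally on $\mathcal F_k^A$ on the event that the draw is performed, not conditionally on $\mathcal F_k^P$; Borel--Cantelli must therefore be applied along those iterations, as in Lemma~\ref{lem:fairness}. Second, the set of non-B-stationary points need not be compact or even closed, but second countability of $\R^n$ already gives your countable cover.
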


Part~(i) follows directly from the stopping test in
Algorithm~\ref{alg:threshold-tr}.  Figure~\ref{fig:threshold-proof-roadmap}
summarizes the proof structure.  The supporting results below address the
main difficulty: the infinite-run assertion in part~(ii).

\begin{figure}[tbp]
\centering
\begingroup
\newcommand{\roadmapentry}[2]{%
    {\color{black}\bfseries #1}\par
    \vspace{0.25ex}\hrule height 0.3pt\vspace{0.35ex}%
    #2}
\begin{tikzpicture}[
    roadmap box/.style={
        draw=black!62,
        fill=black!1,
        line width=0.45pt,
        align=center,
        inner xsep=1mm,
        inner ysep=0.6mm,
        font=\fontsize{6.5}{7.2}\selectfont
    },
    stable box/.style={roadmap box,fill=blue!2},
    new box/.style={roadmap box,fill=orange!3},
    theorem box/.style={
        roadmap box,
        draw=blue!65!black,
        fill=blue!2,
        line width=0.65pt
    },
    roadmap arrow/.style={
        -{Latex[length=1.45mm,width=0.95mm]},
        draw=black!78,
        line width=0.45pt,
        shorten <=0.4pt,
        shorten >=0.4pt
    },
    input arrow/.style={roadmap arrow,densely dashed,draw=black!55}
]

\node[roadmap box,text width=3.35cm,minimum height=10.5mm] (decrease) {%
    \roadmapentry{Proposition~\ref{prop:branch-lp-certificate}
        $\to$ Lemmas~\ref{lem:nonB-proj}--\ref{lem:cross-uniform}}{%
        $\bar x$ not B-stationary
        $\Rightarrow \exists J_{\rm dec}\in\cP(\bar x),\ \varepsilon>0$:\par
        $y$ near $\bar x$ $\Rightarrow$ \LBTRS{} on $J_{\rm dec}$ succeeds and
        $f(y)-f(y^+)\ge\varepsilon$.}%
};
\node[roadmap box,text width=3.25cm,minimum height=10.5mm,
      anchor=north west] (fairness) at ([xshift=2.5mm]decrease.north east) {%
    \roadmapentry{Conditional uniform draws $\to$
        Lemma~\ref{lem:fairness}}{%
        A fixed branch eligible at infinitely many primary (or auxiliary)
        draws $\Rightarrow$ it is selected infinitely many times.}%
};
\node[roadmap box,text width=3.65cm,minimum height=10.5mm,
      anchor=north west] (monotonicity)
      at ([xshift=2.5mm]fairness.north east) {%
    \roadmapentry{Lemma~\ref{lem:monotonicity}}{%
        \mbox{$f(x^{k+1})\le f(x^k)\quad(\forall k),$}\par
        $f(x^k)-f(x^{k+1})\to0$.}%
};
\node[fit=(decrease)(fairness)(monotonicity),inner sep=0pt] (toprow) {};

\node[roadmap box,text width=8.35cm,minimum height=7.5mm,
      below=2.5mm of toprow.south] (setup) {%
    Let $\bar x$ be any accumulation point and choose
    $x^{k_j}\to\bar x$; eventually
    $\cI_{00}(x^{k_j})\subseteq\cI_{00}(\bar x)$.\par
    Suppose, for contradiction, that $\bar x$ is not B-stationary.%
};
\draw[input arrow] (decrease.south) --
    ($(setup.north west)!0.20!(setup.north east)$);
\draw[input arrow] (fairness.south) --
    ($(setup.north west)!0.50!(setup.north east)$);
\draw[input arrow] (monotonicity.south) --
    ($(setup.north west)!0.80!(setup.north east)$);

\node[stable box,text width=3.15cm,minimum height=11.5mm,anchor=north]
      (stable) at ([xshift=-3.88cm,yshift=-2.5mm]setup.south) {%
    \roadmapentry{Lemma~\ref{lem:stable} $\mid$ stable pattern}{%
        $\cI_{00}(x^{k_j})=\cI_{00}(\bar x)$ along a subsequence\par
        $\Rightarrow J_{\rm dec}\in\cP(x^{k_j})$ is eligible at infinitely
        many primary draws.}%
};
\node[new box,text width=3.05cm,minimum height=11.5mm,
      anchor=north west] (newbiactive)
      at ([xshift=2.5mm]stable.north east) {%
    \roadmapentry{Lemma~\ref{lem:one-sided} $\mid$ new biactive indices}{%
        $\cI_{00}(x^{k_j})\subsetneq\cI_{00}(\bar x)$ eventually\par
        $\Rightarrow \cP(\bar x)\subseteq\cP(T_\tau(x^{k_j}))$ and
        $\cS_\tau(x^{k_j})\ne\varnothing$ eventually.}%
};
\node[new box,text width=3.85cm,minimum height=11.5mm,
      anchor=north west] (auxiliary)
      at ([xshift=2.5mm]newbiactive.north east) {%
    \roadmapentry{Lemma~\ref{lem:new-biactive} $\mid$ auxiliary draws}{%
        Algorithm~\ref{alg:threshold-tr} performs\par
        auxiliary draws at infinitely many iterations\par
        $\Rightarrow J_{\rm dec}$ is available for auxiliary sampling
        infinitely many times.}%
};
\node[fit=(stable)(newbiactive)(auxiliary),inner sep=0pt] (branches) {};
\draw[roadmap arrow]
    ($(setup.south west)!0.28!(setup.south east)$) -- (stable.north);
\draw[roadmap arrow]
    ($(setup.south west)!0.72!(setup.south east)$) -- (newbiactive.north);
\draw[roadmap arrow] (newbiactive.east) -- (auxiliary.west);

\node[roadmap box,text width=6.25cm,minimum height=10.5mm,anchor=north]
      (contradiction) at ([xshift=-2.24cm,yshift=-2.5mm]branches.south) {%
    \roadmapentry{Lemmas~\ref{lem:fairness}--\ref{lem:stable}
        $\;/\;$ Lemmas~\ref{lem:fairness}--\ref{lem:new-biactive}}{%
        In either case, $J_{\rm dec}$ is selected infinitely many times\par
        $\Rightarrow f(x^k)-f(x^{k+1})\ge\varepsilon>0$ for infinitely many
        $k$, contradicting Lemma~\ref{lem:monotonicity}:
        $f(x^k)-f(x^{k+1})\to0$.}%
};
\node[roadmap box,text width=4.05cm,minimum height=10.5mm,
      anchor=north west] (finite)
      at ([xshift=2.5mm]contradiction.north east) {%
    \roadmapentry{Proposition~\ref{prop:branch-lp-certificate}
        $\mid$ termination}{%
        $\cI_{00}(x^k)=\varnothing\Rightarrow\cP(x^k)=\{J^k\}$;\par
        primary LP: $p_{J^k}(x^k,\Delta_k)=0$
        $\Rightarrow x^k$ is B-stationary.}%
};
\node[fit=(contradiction)(finite),inner sep=0pt] (resultrow) {};
\draw[roadmap arrow] (stable.south) --
    ($(contradiction.north west)!0.24!(contradiction.north east)$);
\draw[roadmap arrow] (auxiliary.south) --
    ($(contradiction.north west)!0.82!(contradiction.north east)$);

\node[theorem box,text width=8.90cm,minimum height=8mm,
      below=2.2mm of resultrow.south] (theorem) {%
    \roadmapentry{Theorem~\ref{thm:threshold-bstationary}}{%
        finitely many iterations $\Rightarrow$ the returned point is
        B-stationary;\par
        infinitely many iterations $\Rightarrow$ every accumulation point is
        B-stationary.}%
};
\draw[roadmap arrow] (contradiction.south) --
    ($(theorem.north west)!0.30!(theorem.north east)$);
\draw[roadmap arrow] (finite.south) --
    ($(theorem.north west)!0.75!(theorem.north east)$);

\end{tikzpicture}
\endgroup
\caption{Proof roadmap for Theorem~\ref{thm:threshold-bstationary}}
\label{fig:threshold-proof-roadmap}
\end{figure}

First, for $J\in\cP$, $x\in\cOmega_J$, and $\alpha>0$, define the
projected-gradient residual on $\cOmega_J$ by
\[
    s_J(x,\alpha)
    :=
    \proj_{\cOmega_J}\bigl(x-\alpha\grad f(x)\bigr)-x.
\]
The projection is uniquely defined because $\cOmega_J$ is a closed convex
polyhedron.

Second, the linearized branch problem will also be evaluated at base points
that need not belong to $\cOmega_J$.  Whenever $\LP(y,J,\delta)$ is feasible,
denote its optimal predicted decrease by
\[
    \pi_J(y,\delta)
    :=
    \max\left\{
    -\grad f(y)\T d
    \ \middle|\
    y+d\in\cOmega_J,\ \|d\|_\infty\le\delta
    \right\}.
\]
For $y\in\cOmega_J$, this reduces to $p_J(y,\delta)$ defined in
\eqref{eq:branch-pred}.  The next two lemmas use these quantities to identify
a branch that yields uniform decrease near any non-B-stationary point.

\begin{lemma}\label{lem:nonB-proj}
Let \(\bar x\in\cOmega\) be non-B-stationary.  Then, for every
\(\alpha>0\), there exists \(J\in\cP(\bar x)\) such that
\[
    s_J(\bar x,\alpha)\ne0.
\]
\end{lemma}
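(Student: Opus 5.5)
We argue by contraposition via the projection characterization of stationarity on a single convex branch. Fix \(\alpha>0\) and suppose, to the contrary, that \(s_J(\bar x,\alpha)=0\) for every \(J\in\cP(\bar x)\). For such \(J\), \(\bar x\in\cOmega_J\) and \(\bar x=\proj_{\cOmega_J}(\bar x-\alpha\grad f(\bar x))\). The variational inequality characterizing Euclidean projection onto the closed convex polyhedron \(\cOmega_J\) then gives
\[
    \bigl(\bar x-\alpha\grad f(\bar x)-\bar x\bigr)\T(y-\bar x)\le0
    \qquad\forall y\in\cOmega_J,
\]
that is, \(\grad f(\bar x)\T(y-\bar x)\ge0\) for all \(y\in\cOmega_J\), since \(\alpha>0\).

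Next we transfer this inequality from feasible differences to the tangent cone. Because \(\cOmega_J\) is polyhedral, every \(d\in T_{\cOmega_J}(\bar x)\) satisfies \(\bar x+td\in\cOmega_J\) for some \(t>0\); this is the same fact used in the proof of Proposition~\ref{prop:branch-lp-certificate}. Hence \(t\,\grad f(\bar x)\T d\ge0\), so \(\grad f(\bar x)\T d\ge0\). Equivalently, for every \(\Delta>0\) we have \(p_J(\bar x,\Delta)=0\), because any feasible \(d\) in \eqref{eq:branch-pred} has \(\bar x+d\in\cOmega_J\).

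Since this holds for every \(J\in\cP(\bar x)\), statement~(ii) (or (iii)) of Proposition~\ref{prop:branch-lp-certificate} holds, so \(\bar x\) is B-stationary. This contradicts the hypothesis. Alternatively, one can apply \eqref{eq:branch-bstationarity} together with the finite-union formula \eqref{eq:finite-union-tangent} directly. Therefore some \(J\in\cP(\bar x)\) has \(s_J(\bar x,\alpha)\ne0\).

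A direct version of the same argument takes the label \(J_{\rm dec}\) from the last part of Proposition~\ref{prop:branch-lp-certificate} and shows \(s_{J_{\rm dec}}(\bar x,\alpha)\ne0\) for every \(\alpha\). This gives the stronger conclusion that a single branch works uniformly in \(\alpha\), which may be useful in later lemmas. The only substantive step is the projection variational inequality, which is standard. I expect no real obstacle beyond noting that \(\cOmega_J\) is nonempty, which holds because \(\bar x\in\cOmega_J\), and that it is closed and convex.
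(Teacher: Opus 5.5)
Your proof is correct and uses the same ingredients as the paper: the projection variational inequality on the convex branch $\cOmega_J$, which turns $s_J(\bar x,\alpha)=0$ into $p_J(\bar x,\Delta)=0$, combined with Proposition~\ref{prop:branch-lp-certificate}. The paper argues directly from a branch with $p_J(\bar x,\Delta)>0$ (your ``direct version''), whereas your main line is the contrapositive over all compatible branches; the detour through the tangent cone is unnecessary, since the variational inequality already gives $p_J(\bar x,\Delta)=0$.
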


\begin{proof}
By Proposition~\ref{prop:branch-lp-certificate}, there are
\(J\in\cP(\bar x)\) and \(\Delta>0\) with
\(p_J(\bar x,\Delta)>0\).  If \(s_J(\bar x,\alpha)=0\), the projection
variational inequality would give
\(\grad f(\bar x)\T(z-\bar x)\ge0\) for every \(z\in\cOmega_J\), which
implies \(p_J(\bar x,\Delta)=0\), a contradiction.
\end{proof}

\begin{lemma}\label{lem:cross-uniform}
Let \(\bar x\in\cOmega\), \(J\in\cP(\bar x)\), and \(\alpha>0\) satisfy
$s_J(\bar x,\alpha)\ne0$.  Then there exist a neighborhood \(U\) of
\(\bar x\) and a constant \(\varepsilon>0\) such that, for every
\(y\in U\) (neither \(y\in\cOmega\) nor \(J\in\cP(y)\) is required) and
every initial radius \(\delta_0\ge\Delta_{\min}\),
Subalgorithm~\ref{alg:lbtrs} (\LBTRS{}) applied to \((y,J,\delta_0)\)
returns $\mathsf{success}$ after finitely many inner iterations, with a point
\(y^+\in\cOmega_J\)
satisfying
\begin{equation}\label{eq:uniform-decrease}
    f(y^+)\le f(y)-\varepsilon.
\end{equation}
\end{lemma}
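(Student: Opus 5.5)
The plan is to turn the nonzero projected-gradient residual into a uniform positive lower bound on the predicted decrease $\pi_J(y,\delta)$ for $y$ near $\bar x$ and every radius $\delta$ in a fixed small range. Continuity of the quadratic model then lets the ratio test accept a trial whose radius is bounded below uniformly.

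\textbf{Step 1: a strict feasible descent step at $\bar x$.} Set $s:=s_J(\bar x,\alpha)\neq0$. The projection variational inequality gives
\[
\grad f(\bar x)\T s\le-\|s\|^2/\alpha<0 .
\]
Since $\cOmega_J$ is convex, $\bar x+ts\in\cOmega_J$ for $t\in[0,1]$. Hence for $0<\delta\le\|s\|_\infty$ the step $(\delta/\|s\|_\infty)s$ is feasible, and
\[
p_J(\bar x,\delta)\ge c_0\delta,\qquad c_0:=\|s\|^2/(\alpha\|s\|_\infty)>0 .
\]

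\textbf{Step 2: transfer to nearby base points $y$.} This is the main obstacle, because $y$ need not lie in $\cOmega_J$ and the LP feasible set $\{d:y+d\in\cOmega_J,\ \|d\|_\infty\le\delta\}$ moves with $y$. I would use the explicit candidate
\[
d=\bar x-y+t s,\qquad t:=\delta/(2\|s\|_\infty).
\]
Then $y+d=\bar x+ts\in\cOmega_J$, and $\|d\|_\infty\le\|y-\bar x\|_\infty+\delta/2\le\delta$ whenever $\|y-\bar x\|_\infty\le\delta/2$. This gives
\[
\pi_J(y,\delta)\ge -\grad f(y)\T(\bar x-y)-t\,\grad f(y)\T s .
\]
By continuity of $\grad f$, for $y$ close to $\bar x$ the second term is at least $\tfrac{c_0}{4}\delta$, while the first term is $O(\|y-\bar x\|)$.

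The difficulty is that the \LBTRS{} radii $\delta_\ell$ shrink geometrically, so $\delta_\ell$ must stay well above $\|y-\bar x\|$. I therefore fix a target radius $\bar\delta\le\|s\|_\infty$, to be chosen in Step 3, and take $U$ so small that $\|y-\bar x\|_\infty\le\gamma_{\rm dec}\bar\delta/2$ and the $O(\|y-\bar x\|)$ term is at most $\tfrac{c_0}{8}\gamma_{\rm dec}\bar\delta$. Then
\[
\pi_J(y,\delta)\ge \tfrac{c_0}{8}\delta\qquad\text{for all }\delta\in[\gamma_{\rm dec}\bar\delta,\bar\delta],\ y\in U .
\]
This also shows the LP is feasible for all such $\delta$, and by monotonicity in $\delta$ for all larger radii as well.

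\textbf{Step 3: acceptance and uniform decrease.} Choose $\bar\delta$ so that $\tfrac{n\|Q\|_2\bar\delta}{2}\le(1-\eta_1)\tfrac{c_0}{8}$, and also $\bar\delta\le\min\{\Delta_{\min},\|s\|_\infty\}$. For any optimal $d_\ell$ with $\delta_\ell\le\bar\delta$,
\[
1-\varrho_\ell=\frac{d_\ell\T Qd_\ell}{2\pred_\ell}\le \frac{\|Q\|_2 n\delta_\ell^2}{2\pred_\ell}\le 1-\eta_1,
\]
so the trial is accepted. Now consider the \LBTRS{} run from $\delta_0\ge\Delta_{\min}\ge\bar\delta$.

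The LP is feasible at every radius $\delta_\ell\ge\gamma_{\rm dec}\bar\delta$. At each such radius, $\pred_\ell\ge\pi_J(y,\min\{\delta_\ell,\bar\delta\})>0$, so neither $\mathsf{zero}$ nor $\mathsf{failure}$ is returned. The radii decrease geometrically, so within finitely many steps some $\delta_\ell$ lands in $[\gamma_{\rm dec}\bar\delta,\bar\delta]$ and is accepted. An earlier acceptance may also occur, at a radius $\delta_\ell>\bar\delta$.

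In either case the accepted step satisfies
\[
\ared_\ell\ge\eta_1\pred_\ell\ge\eta_1\pi_J(y,\gamma_{\rm dec}\bar\delta)\ge\eta_1\tfrac{c_0}{8}\gamma_{\rm dec}\bar\delta=:\varepsilon .
\]
This uses that $\pi_J(y,\cdot)$ is nondecreasing. Since $\varepsilon$ is independent of $y\in U$ and of $\delta_0$, this yields \eqref{eq:uniform-decrease}.
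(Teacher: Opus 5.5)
Your proposal is correct and follows the paper's proof essentially step for step. Both use the same candidate step $d=\bar x-y+\tfrac{\delta}{2\|s\|_\infty}s$, the same choice of $\bar\delta$ with $\delta_\star=\gamma_{\rm dec}\bar\delta$, and the same constant $\varepsilon=\eta_1 c\,\gamma_{\rm dec}\bar\delta$. The only differences are cosmetic: you write $c_0=\|s\|^2/(\alpha\|s\|_\infty)$ where the paper uses $\kappa/M$ with $\kappa=-\grad f(\bar x)\T s$, and you merge the paper's two acceptance cases into one line via monotonicity of $\pi_J(y,\cdot)$.
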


\begin{proof}
Set
\[
    s:=s_J(\bar x,\alpha),\qquad
    \kappa:=-\grad f(\bar x)\T s,\qquad
    M:=\|s\|_\infty>0,\qquad
    C_Q:=\tfrac n2\|Q\|_2.
\]
Let
\[
    u:=\bar x-\alpha\grad f(\bar x),
    \qquad
    p:=\proj_{\cOmega_J}(u)=\bar x+s.
\]
Since \(J\in\cP(\bar x)\), we have \(\bar x\in\cOmega_J\).  The projection
variational inequality is
\[
    (p-u)\T(z-p)\ge0
    \qquad \forall z\in\cOmega_J.
\]
Taking \(z=\bar x\) gives
\[
    0\le(p-u)\T(\bar x-p)
    =\bigl(s+\alpha\grad f(\bar x)\bigr)\T(-s)
    =-\|s\|^2-\alpha\grad f(\bar x)\T s.
\]
Consequently,
\[
    \kappa=-\grad f(\bar x)\T s
    \ge\alpha^{-1}\|s\|^2>0,
\]
where the strict inequality follows from \(s\ne0\).
Choose \(\bar\delta>0\) such that
\begin{equation}\label{eq:deltabar-choice}
    \bar\delta\le\min\{\Delta_{\min},2M\},
    \qquad
    C_Q\bar\delta
    \le(1-\eta_1)\frac{\kappa}{8M},
\end{equation}
and define
\[
    \delta_\star:=\gamma_{\rm dec}\bar\delta.
\]
By continuity of \(\grad f\), shrink a neighborhood \(U\) of \(\bar x\)
so that every \(y\in U\) satisfies
\begin{equation}\label{eq:nearby-bounds}
    -\grad f(y)\T s\ge\tfrac{\kappa}{2},\qquad
    \|y-\bar x\|_\infty\le\tfrac{\delta_\star}{2},\qquad
    \bigl|\grad f(y)\T(\bar x-y)\bigr|
    \le\frac{\kappa\delta_\star}{8M}.
\end{equation}

Fix \(y\in U\) and
\(\delta\in[\delta_\star,\bar\delta]\).  Define
\[
    t_\delta:=\frac{\delta}{2M},\qquad
    z_\delta:=\bar x+t_\delta s,\qquad
    d_\delta:=z_\delta-y.
\]
By \eqref{eq:deltabar-choice}, \(0<t_\delta\le1\).  Moreover,
\(\bar x\in\cOmega_J\) and
\(\bar x+s=\proj_{\cOmega_J}(u)\in\cOmega_J\), so convexity gives
\(z_\delta\in\cOmega_J\).  From \eqref{eq:nearby-bounds} and
\(\delta\ge\delta_\star\), and recalling that
\(\|s\|_\infty=M\),
\[
    \|d_\delta\|_\infty
    \le\|\bar x-y\|_\infty+t_\delta\|s\|_\infty
    \le\tfrac{\delta_\star}{2}+\tfrac\delta2
    \le\delta.
\]
Hence \(d_\delta\) is feasible for \(\LP(y,J,\delta)\), so this LP is
feasible.  Since \(\pi_J(y,\delta)\) is its optimal predicted decrease,
\(d_\delta=(\bar x-y)+t_\delta s\), and
\eqref{eq:nearby-bounds} holds, we obtain
\[
    \pi_J(y,\delta)
    \ge-\frac{\kappa\delta_\star}{8M}
      +\frac{\kappa\delta}{4M}
    \ge\frac{\kappa}{8M}\delta.
\]
Let \(c:=\kappa/(8M)>0\).  We have proved
\begin{equation}\label{eq:pred-linear}
    \pi_J(y,\delta)\ge c\delta
    \qquad
    \forall\delta\in[\delta_\star,\bar\delta].
\end{equation}
For every \(\delta\ge\bar\delta\), the step
\(d_{\bar\delta}\) remains feasible, and hence
\begin{equation}\label{eq:pred-large}
    \pi_J(y,\delta)\ge c\bar\delta>0.
\end{equation}

Consider Subalgorithm~\ref{alg:lbtrs} with
\(\delta_0\ge\Delta_{\min}\ge\bar\delta\).  At all trial radii
\(\delta_\ell\ge\bar\delta\), \eqref{eq:pred-large} shows that the LP is
feasible and has positive predicted decrease.  Consequently, the algorithm
cannot return failure before either accepting or reaching a radius not
exceeding \(\bar\delta\).

If it accepts at a radius \(\delta_\ell>\bar\delta\), then
\[
    \pred_\ell\ge c\bar\delta\ge c\delta_\star
\]
and the ratio test gives
\[
    f(y+d_\ell)\le f(y)-\eta_1c\delta_\star.
\]
Otherwise, let \(\ell\) be the first index with
\(\delta_\ell\le\bar\delta\).  The initialization and the geometric radius
update in line~17 of Subalgorithm~\ref{alg:lbtrs} imply
\(\delta_\star\le\delta_\ell\le\bar\delta\).
For any optimal LP step \(d_\ell\), \eqref{eq:pred-linear} gives
\(\pred_\ell\ge c\delta_\ell\), while
\(\left|\tfrac12d_\ell\T Qd_\ell\right|
\le C_Q\delta_\ell^2\).  Using
\(\delta_\ell\le\bar\delta\), \eqref{eq:deltabar-choice}, and
\(\pred_\ell\ge c\delta_\ell\), we obtain
\[
    C_Q\delta_\ell^2
    \le C_Q\bar\delta\,\delta_\ell
    \le(1-\eta_1)c\delta_\ell
    \le(1-\eta_1)\pred_\ell.
\]
Using the quadratic expansion of \(f\),
\(\ared_\ell=f(y)-f(y+d_\ell)
=-\grad f(y)\T d_\ell-\tfrac12d_\ell\T Qd_\ell
=\pred_\ell-\tfrac12d_\ell\T Qd_\ell\).  Therefore,
\[
    \ared_\ell
    \ge\pred_\ell-\left|\tfrac12d_\ell\T Qd_\ell\right|
    \ge\eta_1\pred_\ell>0.
\]
Thus \(\varrho_\ell=\ared_\ell/\pred_\ell\ge\eta_1\), so the trial is
accepted.  Using \(\delta_\star\le\delta_\ell\le\bar\delta\) and
\(\pred_\ell\ge c\delta_\ell\),
\[
    f(y+d_\ell)
    =f(y)-\ared_\ell
    \le f(y)-\eta_1\pred_\ell
    \le f(y)-\eta_1c\delta_\ell
    \le f(y)-\eta_1c\delta_\star.
\]
If no earlier trial is accepted, the update in line~17 reaches
\([\delta_\star,\bar\delta]\) after finitely many contractions.
Equations~\eqref{eq:pred-linear}--\eqref{eq:pred-large} exclude failure.
At the first radius in this interval,
\(\varrho_\ell\ge\eta_1\), so line~15 returns $\mathsf{success}$.
In either acceptance case,
\eqref{eq:uniform-decrease} holds with
\(\varepsilon:=\eta_1c\delta_\star>0\).
\end{proof}

\begin{corollary}\label{cor:primary-well-defined}
At the $k$-th iteration of Algorithm~\ref{alg:threshold-tr}, suppose that the
optimal predicted decrease of the primary LP is positive, namely,
$\pi_{J^k}(x^k,\Delta_k)>0$.  Then Subalgorithm~\ref{alg:lbtrs}, called with
input $(x^k,J^k,\Delta_k)$, returns $\mathsf{success}$ after finitely many
inner iterations and produces $x_P^k\in\cOmega_{J^k}$ satisfying
$f(x_P^k)<f(x^k)$.
\end{corollary}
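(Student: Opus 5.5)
The plan is to reduce Corollary~\ref{cor:primary-well-defined} to Lemma~\ref{lem:cross-uniform} applied at the base point $\bar x:=x^k$ itself, with $J:=J^k$. We do not need the neighborhood $U$ or the uniform bound; taking $y=\bar x=x^k\in U$ is enough. Two checks are needed: that the hypotheses of Lemma~\ref{lem:cross-uniform} hold, and that the radius $\Delta_k$ passed to the call satisfies $\Delta_k\ge\Delta_{\min}$.

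First, $J^k\in\cP(x^k)$ by the primary sampling rule, so $x^k\in\cOmega_{J^k}$. Next I derive $s_{J^k}(x^k,\alpha)\ne0$ for, say, $\alpha=1$, arguing by contraposition as in Lemma~\ref{lem:nonB-proj}. If $s_{J^k}(x^k,1)=0$, the projection variational inequality gives $\grad f(x^k)\T(z-x^k)\ge0$ for all $z\in\cOmega_{J^k}$. Since $x^k\in\cOmega_{J^k}$, the quantity $\pi_{J^k}(x^k,\Delta_k)$ coincides with $p_{J^k}(x^k,\Delta_k)$. Every feasible $d$ has $x^k+d\in\cOmega_{J^k}$, so $-\grad f(x^k)\T d\le 0$, which forces $\pi_{J^k}(x^k,\Delta_k)=0$. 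This contradicts the hypothesis, so the residual is nonzero.

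Second, I verify $\Delta_k\ge\Delta_{\min}$ by induction on $k$. We have $\Delta_0\ge\Delta_{\min}$ by the input requirement. Each update of Algorithm~\ref{alg:threshold-tr} sets $\Delta_{k+1}$ to one of three values: $\Delta_k$ (after a $\mathsf{zero}$ return), $\Delta_A^k$, or $\Delta_P^k$. The quantity $\Delta_A^k$ is either $\Delta_k$ or a successful radius from \eqref{eq:radius-update}, and $\Delta_P^k$ is also a successful radius from \eqref{eq:radius-update}. Every value produced by \eqref{eq:radius-update} is a maximum with $\Delta_{\min}$, so the bound is preserved. This step is routine, but it must be stated, because Lemma~\ref{lem:cross-uniform} requires $\delta_0\ge\Delta_{\min}$ to guarantee that the contraction reaches the interval $[\delta_\star,\bar\delta]$ from above.

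With both hypotheses in place, Lemma~\ref{lem:cross-uniform} with $y=x^k$ and $\delta_0=\Delta_k$ yields $\mathsf{success}$ after finitely many inner iterations. The returned point $x_P^k:=y^+$ lies in $\cOmega_{J^k}$ and satisfies $f(x_P^k)\le f(x^k)-\varepsilon<f(x^k)$. The strict decrease can also be read off directly from the acceptance test, since $\ared_\ell\ge\eta_1\pred_\ell>0$.

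The main subtlety is that Lemma~\ref{lem:cross-uniform} is phrased with $\bar x$ an arbitrary point and $s_J\ne0$, rather than with positive predicted decrease. The bridge between these two conditions is the projection-residual argument above, which is essentially the content of Lemma~\ref{lem:nonB-proj} localized to a single branch.
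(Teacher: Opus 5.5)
Your proposal is correct and follows the paper's proof. Like the paper, you derive $s_{J^k}(x^k,\alpha)\ne0$ from $\pi_{J^k}(x^k,\Delta_k)>0$ via the projection variational inequality, and then apply Lemma~\ref{lem:cross-uniform} with $\bar x=y=x^k$, $J=J^k$, and $\delta_0=\Delta_k$. Your explicit check that $\Delta_k\ge\Delta_{\min}$ is a useful addition that the paper leaves implicit, though strictly it is a joint induction with the corollary at earlier iterations, since your three-case enumeration of the radius update presumes that no earlier primary search returned $\mathsf{failure}$.
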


\begin{proof}
The condition $\pi_{J^k}(x^k,\Delta_k)>0$ and the projection variational
inequality imply
\(s_{J^k}(x^k,\alpha)\ne0\) for every \(\alpha>0\).  Apply
Lemma~\ref{lem:cross-uniform} with
\(\bar x=y=x^k\), \(J=J^k\), and \(\delta_0=\Delta_k\).
\end{proof}

\begin{lemma}
\label{lem:lbtrs-finite}
Let \(y\in\R^n\), \(J\in\cP\), and
\(\delta_0\ge\Delta_{\min}\).  Applied to input
\((y,J,\delta_0)\), Subalgorithm~\ref{alg:lbtrs} successively solves the
linearized branch trust-region problems \(\LP(y,J,\delta_\ell)\) and
terminates after finitely many inner iterations with one of the outputs
$\mathsf{success}$, $\mathsf{zero}$, or $\mathsf{failure}$.
\end{lemma}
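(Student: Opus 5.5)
The plan is to follow a single call of Subalgorithm~\ref{alg:lbtrs} and show that the inner loop cannot run forever. At each inner index $\ell$ the subalgorithm reaches one of its exits (lines~3--4, 7--10, or 14--16) or contracts the radius. So the task reduces to showing that infinitely many contractions are impossible. First, every $\LP(y,J,\delta_\ell)$ is well posed. Its feasible set is a polyhedron intersected with the box $\|d\|_\infty\le\delta_\ell$, hence compact. The LP is therefore either infeasible, in which case line~4 returns $\mathsf{failure}$, or it attains an optimal step $d_\ell$. Then $\pred_\ell$ is defined, and either one of the tests in lines~7--10 returns, or $\pred_\ell>0$ and $\varrho_\ell$ is computed.

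Next I would rule out an infinite run. Suppose, for contradiction, that no return occurs. Then $\delta_\ell=\gamma_{\rm dec}^\ell\delta_0\to0$, and every LP is feasible with $\pred_\ell>0$. Feasibility of the constraints $y+d\in\cOmega_J$ for arbitrarily small $\|d\|_\infty$, together with closedness of $\cOmega_J$, forces $y\in\cOmega_J$. Consequently the zero step is feasible for every LP. Since $\LP(y,J,\delta_{\ell})$ has a larger feasible set than $\LP(y,J,\delta_{\ell+1})$, the value $\pred_\ell=p_J(y,\delta_\ell)$ is nonincreasing in $\ell$.

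The key step is to obtain a linear lower bound $\pred_\ell\ge c\,\delta_\ell$ with some $c>0$, for all sufficiently large $\ell$. Since $\pred_0>0$, we have $p_J(y,\delta_0)>0$. By the projection variational inequality, as in the proof of Corollary~\ref{cor:primary-well-defined}, this gives $s_J(y,\alpha)\ne0$. I would then invoke Lemma~\ref{lem:cross-uniform} with $\bar x=y$: it guarantees that the call returns $\mathsf{success}$ after finitely many inner iterations, which contradicts the assumed infinite run. If one prefers a direct argument, pick a feasible $d$ with $-\grad f(y)\T d>0$ and scale it by $\delta_\ell/\|d\|_\infty$, using convexity of $\cOmega_J$ and $y\in\cOmega_J$. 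This yields $\pred_\ell\ge c\delta_\ell$. Combined with the bound $|\tfrac12 d_\ell\T Qd_\ell|\le C_Q\delta_\ell^2$ from the proof of Lemma~\ref{lem:cross-uniform}, it gives $\varrho_\ell\ge1-C_Q\delta_\ell/c\to1>\eta_1$, so line~15 eventually returns.

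The main obstacle is the case where $y\notin\cOmega_J$, or where $\pred_0>0$ arises from an infeasible base point. I handle this by observing that in that case the LP becomes infeasible once $\delta_\ell<\dist_\infty(y,\cOmega_J)$, which triggers $\mathsf{failure}$ after finitely many contractions. Only the case $y\in\cOmega_J$ needs the descent argument above.
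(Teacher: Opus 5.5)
Your proposal is correct and uses the same ingredients as the paper's proof. If $\cOmega_J=\varnothing$, or once $\delta_\ell<\dist_\infty(y,\cOmega_J)$, the problem $\LP(y,J,\delta_\ell)$ is infeasible and the call returns $\mathsf{failure}$. For $y\in\cOmega_J$, either $\pred_0=0$ gives $\mathsf{zero}$, or $\pred_0>0$ gives $s_J(y,1)\ne0$ and Lemma~\ref{lem:cross-uniform} then gives $\mathsf{success}$.

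The differences are only in packaging. The paper splits explicitly into three cases, whereas your contradiction argument derives $y\in\cOmega_J$ from feasibility at arbitrarily small radii. Your alternative scaling argument, giving $\varrho_\ell\ge1-C_Q\delta_\ell/c$, is a valid and more self-contained substitute for invoking Lemma~\ref{lem:cross-uniform}.
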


\begin{proof}
We distinguish three cases.  First, suppose that
\(\cOmega_J=\varnothing\).  The initial problem
\(\LP(y,J,\delta_0)\) is then infeasible.  Hence lines~3--4 of
Subalgorithm~\ref{alg:lbtrs} return $\mathsf{failure}$ at \(\ell=0\).

Suppose next that \(\cOmega_J\ne\varnothing\) but
\(y\notin\cOmega_J\).  Because \(\cOmega_J\) is closed,
\(r:=\dist_\infty(y,\cOmega_J)>0\).  Assume, to the contrary, that the call
does not terminate after finitely many inner iterations.  Every preceding
iteration then contracts the trust-region radius according to line~17, and hence
\[
    \delta_\ell=\gamma_{\rm dec}^{\,\ell}\delta_0\downarrow0,
    \qquad
    L:=\min\{\ell\in\mathbb N\mid\delta_\ell<r\}<\infty.
\]
At the \(L\)-th iteration, no step can satisfy both
\(y+d\in\cOmega_J\) and \(\|d\|_\infty\le\delta_L\).  Thus
\(\LP(y,J,\delta_L)\) is infeasible, and lines~3--4 return
$\mathsf{failure}$, a contradiction.

Finally, let \(y\in\cOmega_J\).  The zero step is feasible at every radius,
so \(\pi_J(y,\delta)\ge0\).  If \(\pi_J(y,\delta_0)=0\), lines~7--8 return
$\mathsf{zero}$ at \(\ell=0\).  Otherwise,
\(\pi_J(y,\delta_0)>0\) implies \(s_J(y,1)\ne0\) by the projection
optimality condition.  Lemma~\ref{lem:cross-uniform}, applied with
\(\bar x=y\) and \(\alpha=1\), therefore guarantees that line~15 returns
$\mathsf{success}$ after finitely many inner iterations.  The three cases
exhaust all inputs.
\end{proof}

\begin{lemma}
\label{lem:monotonicity}
Under Assumption~\ref{ass:compact}, each realization of
Algorithm~\ref{alg:threshold-tr} (\TBTA{}) generates feasible iterates in
$\cL(x^0)$ and satisfies $f(x^{k+1})\le f(x^k)$ at every completed
iteration, with strict inequality whenever $x^{k+1}\ne x^k$.  If the iterate
sequence is infinite, then it has accumulation points, $\{f(x^k)\}$
converges, and $f(x^k)-f(x^{k+1})\to0$.
\end{lemma}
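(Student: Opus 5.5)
The proof is an induction over iterations, tracking the three ways Algorithm~\ref{alg:threshold-tr} can update the iterate; compactness from Assumption~\ref{ass:compact} then supplies the limiting statements. The inductive claim is $x^k\in\cL(x^0)$. It holds at $k=0$ because $x^0\in\cOmega$.

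Assume $x^k\in\cL(x^0)$ and that iteration $k$ completes. We split on the outcome of the primary search.

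\emph{Primary search returns $\mathsf{zero}$.} If $\cI_{00}(x^k)=\varnothing$, the algorithm terminates and no new iterate is produced. Otherwise $x^{k+1}=x^k$, so there is nothing to prove.

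\emph{Primary search does not return $\mathsf{zero}$.} Since $J^k\in\cP(x^k)$, the zero step is feasible and $\pi_{J^k}(x^k,\Delta_k)\ge0$. A non-$\mathsf{zero}$ return forces $\pi_{J^k}(x^k,\Delta_k)>0$. Corollary~\ref{cor:primary-well-defined} then gives $\mathsf{success}$ with $x_P^k\in\cOmega_{J^k}\subseteq\cOmega$ and $f(x_P^k)<f(x^k)$.

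\emph{Auxiliary candidate.} The candidate $x_A^k$ is either $x^k$ itself or a point returned by a successful \LBTRS{} call. In the latter case it lies in $\cOmega_{\widehat J^k}\subseteq\cOmega$ and passed the ratio test $\varrho_\ell\ge\eta_1$ with $\pred_\ell>0$, so $f(x_A^k)<f(x^k)$. This holds even though the base point $x^k$ need not lie in $\cOmega_{\widehat J^k}$, because $\ared_\ell\ge\eta_1\pred_\ell>0$ is all that is used.

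\emph{Selection.} The rule sets $x^{k+1}=x_A^k$ only if $f(x_A^k)<f(x_P^k)$, and $x^{k+1}=x_P^k$ otherwise. Either way $x^{k+1}\in\cOmega$ and $f(x^{k+1})\le f(x_P^k)<f(x^k)$.

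So in every case $x^{k+1}=x^k$ or $f(x^{k+1})<f(x^k)$. This gives monotonicity, strict decrease whenever $x^{k+1}\ne x^k$, and $x^{k+1}\in\cL(x^0)$, which closes the induction. One detail to check: $x_A^k=x^k$ can never be selected while $x_P^k$ is available, since $f(x^k)>f(x_P^k)$ rules out $f(x_A^k)<f(x_P^k)$ in that case. This is harmless either way, but it confirms that every change of iterate comes with strict decrease.

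For an infinite run, $\{x^k\}$ lies in the compact set $\cL(x^0)$, so Bolzano--Weierstrass gives accumulation points. The sequence $\{f(x^k)\}$ is nonincreasing and bounded below by $\min_{\cL(x^0)}f$, which is finite because $f$ is continuous on a compact set. Hence $\{f(x^k)\}$ converges, and $f(x^k)-f(x^{k+1})\to0$.

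The only step needing care is the claim that the auxiliary search, run from a base point possibly outside its branch, still yields a feasible point with lower objective. This follows from the output specification of Subalgorithm~\ref{alg:lbtrs}: a returned point satisfies $y^+\in\cOmega_J$, and the ratio test forces $\ared_\ell>0$. No further difficulty is expected.
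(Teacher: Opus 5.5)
Your proof is correct and follows the same argument as the paper. Both proceed by induction on the iterations, split on the zero/success outcome of the primary call via Corollary~\ref{cor:primary-well-defined}, note that a successful auxiliary call yields a feasible point with lower objective, use the minimum-selection rule, and then invoke compactness of $\cL(x^0)$ for the limiting claims. You add slightly more detail than the paper on why a non-$\mathsf{zero}$ primary return forces positive predicted decrease, and on why the ratio test gives decrease even when the base point lies outside the auxiliary branch.
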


\begin{proof}
Fix a realization.  The input $x^0$ is feasible, and suppose inductively that
$x^k\in\cOmega$.  Since line~3 chooses $J^k\in\cP(x^k)$, the primary call in
line~4 returns either $\mathsf{zero}$ or $\mathsf{success}$ by
Corollary~\ref{cor:primary-well-defined}.  In the first case, lines~5--10
either return $x^k$ at line~7 or set $x^{k+1}=x^k$ at line~9.  In the second
case, line~12 records a feasible point $x_P^k$ with $f(x_P^k)<f(x^k)$.
Line~13 initializes $x_A^k=x^k$, and lines~17--18 replace it only by a
feasible point that also lowers $f$.  Lines~21--25 select the candidate with
the smaller objective value.  Thus every update satisfies
$x^{k+1}\in\cOmega$ and $f(x^{k+1})\le f(x^k)$, with strict inequality if
$x^{k+1}\ne x^k$.

Induction proves these properties for every generated iterate.  Hence all
iterates remain in $\cL(x^0)$.  If the sequence is infinite, compactness of
this set gives accumulation points.  Since $f$ is bounded below on
$\cL(x^0)$, its nonincreasing values converge, and consequently
$f(x^k)-f(x^{k+1})\to0$.
\end{proof}

For each \(J\in\cP\) and \(r\in\mathbb N_+\), let
\(\mathcal U_{J,r}\) be the union of all open sets \(U\subseteq\R^n\) such
that every \LBTRS{} call with input \((y,J,\delta_0)\), where \(y\in U\)
and \(\delta_0\ge\Delta_{\min}\), returns \(\mathsf{success}\) with a point
\(y^+\in\cOmega_J\) satisfying \(f(y^+)\le f(y)-r^{-1}\).  By construction,
\(\mathcal U_{J,r}\) is open and has this same uniform-decrease property.

Now let \(\bar x\in\cOmega\) be non-B-stationary.  Lemmas~
\ref{lem:nonB-proj} and~\ref{lem:cross-uniform} provide a label
\(J\in\cP(\bar x)\), an open neighborhood \(U\) of \(\bar x\), and
\(\varepsilon>0\) such that every \LBTRS{} call with input
\((y,J,\delta_0)\), where \(y\in U\) and
\(\delta_0\ge\Delta_{\min}\), decreases \(f\) by at least
\(\varepsilon\).  Choose \(r\in\mathbb N_+\) with
\(r^{-1}\le\varepsilon\).  Then \(U\) is one of the open sets in the
defining union for \(\mathcal U_{J,r}\), and hence
\begin{equation}\label{eq:descent-cover}
    \bar x\ \text{not B-stationary}
    \quad\Longrightarrow\quad
    \exists J\in\cP(\bar x),\ r\in\mathbb N_{+}:
    \quad \bar x\in\mathcal U_{J,r}.
\end{equation}
We next record the iterations at which a fixed pair $(J,r)$ is eligible for
primary or auxiliary sampling.
Let $\mathcal D_A$ be the set of iterations at which the auxiliary draw in
Algorithm~\ref{alg:threshold-tr} is performed.
For every \(J\in\cP\) and \(r\in\mathbb N_+\), define
\[
\begin{aligned}
    \mathcal K_{J,r}
    &:=
    \{k\mid x^k\in\mathcal U_{J,r},\ J\in\cP(x^k)\},\\
    \widehat{\mathcal K}_{J,r}
    &:=
    \{k\in\mathcal D_A\mid x^k\in\mathcal U_{J,r},\
             J\in\cP(T_\tau(x^k))\}.
\end{aligned}
\]
\begin{lemma}\label{lem:fairness}
Let $\{x^k\}$, $\{J^k\}$, and $\{\widehat J^k\}$ be, respectively, the
iterate, primary-label, and auxiliary-label sequences generated by
Algorithm~\ref{alg:threshold-tr} (\TBTA{}).  With probability one, the
following implications hold simultaneously for every $J\in\cP$ and
$r\in\mathbb N_+$:
\[
\begin{aligned}
|\mathcal K_{J,r}|=\infty
&\Longrightarrow
J^k=J\text{ for infinitely many }k\in\mathcal K_{J,r},\\
|\widehat{\mathcal K}_{J,r}|=\infty
&\Longrightarrow
\widehat J^k=J\text{ for infinitely many }
k\in\widehat{\mathcal K}_{J,r}.
\end{aligned}
\]
Equivalently, denote this event by $\mathcal E_{\rm fair}$; then
$\mathbb P(\mathcal E_{\rm fair})=1$.
\end{lemma}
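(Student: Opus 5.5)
The plan is to mirror the proof of Lemma~\ref{lem:lpcc-finite-sampling}, using the conditional Borel--Cantelli lemma \cite{Chen1978ConditionalBorelCantelli} once for each of the countably many pairs $(J,r)\in\cP\times\mathbb N_+$ and each of the two draws. The countable intersection of the resulting probability-one events then gives $\mathcal E_{\rm fair}$.

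\textbf{Primary draw.} Fix $J$ and $r$. Consider the event $G_k:=\{x^k\in\mathcal U_{J,r},\ J\in\cP(x^k)\}$.
\begin{itemize}
\item Since $x^k$ is $\mathcal F_k^P$-measurable and $\mathcal U_{J,r}$ is open (hence Borel), $G_k\in\mathcal F_k^P$.
\item By \eqref{eq:tbta-primary-uniform} and $|\cP(x^k)|\le 2^m$,
\[
\mathbb P(\{J^k=J\}\cap G_k\mid\mathcal F_k^P)=\mathbf1_{G_k}\,|\cP(x^k)|^{-1}\ge 2^{-m}\mathbf1_{G_k}.
\]
\item The event $\{J^k=J\}\cap G_k$ is $\mathcal F_{k+1}^P$-measurable, because $\mathcal F_k^A\subseteq\mathcal F_{k+1}^P$.
\end{itemize}
The conditional Borel--Cantelli lemma, applied with the filtration $\{\mathcal F_k^P\}$, states that almost surely $\sum_k\mathbb P(H_k\mid\mathcal F_k^P)=\infty$ implies $H_k$ occurs infinitely often. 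Take $H_k:=\{J^k=J\}\cap G_k$. On $\{|\mathcal K_{J,r}|=\infty\}$ we have $\sum_k\mathbf1_{G_k}=\infty$, so the sum of conditional probabilities diverges. Hence $J^k=J$ for infinitely many $k\in\mathcal K_{J,r}$, almost surely.

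\textbf{Auxiliary draw.} Here I use the interleaved filtration. Set $\widehat G_k:=D_k^A\cap\{x^k\in\mathcal U_{J,r},\ J\in\cP(T_\tau(x^k))\}$.
\begin{itemize}
\item $\widehat G_k\in\mathcal F_k^A$, since $D_k^A$, $x^k$ and $\cP(T_\tau(x^k))$ are all $\mathcal F_k^A$-measurable.
\item By \eqref{eq:tbta-auxiliary-uniform}, on $\widehat G_k$ we have $\mathbb P(\widehat J^k=J\mid\mathcal F_k^A)\ge 2^{-m}$.
\item $\{\widehat J^k=J\}\cap\widehat G_k\in\mathcal F_{k+1}^P\subseteq\mathcal F_{k+1}^A$.
\end{itemize}
Apply the conditional Borel--Cantelli lemma with the filtration $\{\mathcal F_k^A\}_k$. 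On $\{|\widehat{\mathcal K}_{J,r}|=\infty\}$ this yields infinitely many $k\in\widehat{\mathcal K}_{J,r}$ with $\widehat J^k=J$, almost surely.

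\textbf{Combining.} Each pair $(J,r)$ and each of the two implications fails only on a null set. There are countably many such pairs, so the union of the failure sets is null. Its complement is $\mathcal E_{\rm fair}$, and $\mathbb P(\mathcal E_{\rm fair})=1$.

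The main point needing care is measurability and adaptedness. One must check that the index sets $\mathcal K_{J,r}$ and $\widehat{\mathcal K}_{J,r}$ are determined by information available \emph{before} the corresponding draw, so that the lower bound $2^{-m}\mathbf1$ holds conditionally. One must also check that the events used in Borel--Cantelli are adapted to the next $\sigma$-algebra. This is exactly why the filtrations satisfy $\mathcal F_k^P\subseteq\mathcal F_k^A\subseteq\mathcal F_{k+1}^P$.

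Finite termination also needs handling. If the algorithm stops, the sequence is finite and both premises are vacuous. To keep the processes defined for all $k$, I extend them after termination by dummy uniform draws, which changes nothing.
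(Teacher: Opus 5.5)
Your proposal is correct and follows the paper's proof essentially step for step. The paper uses the same events $A_k^P$ and $A_k^A$ (your $G_k$ and $\widehat G_k$), the same $2^{-m}\mathbf 1$ lower bound from the conditional uniform rules, the conditional Borel--Cantelli lemma applied separately along $\{\mathcal F_k^P\}$ and $\{\mathcal F_k^A\}$, and a countable union over $(J,r)$. Your two extra points, the adaptedness of the events $H_k$ to the next $\sigma$-algebra and the extension of terminated runs by dummy draws, are sound refinements that the paper leaves implicit.
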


\begin{proof}
Fix $(J,r)$, and define
\[
\begin{aligned}
A_k^P&:=\{x^k\in\mathcal U_{J,r},\ J\in\cP(x^k)\},\\
A_k^A&:=\{k\in\mathcal D_A,\ x^k\in\mathcal U_{J,r},\
J\in\cP(T_\tau(x^k))\}.
\end{aligned}
\]
These events are $\mathcal F_k^P$- and $\mathcal F_k^A$-measurable,
respectively, and $A_k^A$ can occur only when the auxiliary draw is performed.
By \eqref{eq:tbta-primary-uniform}--\eqref{eq:tbta-auxiliary-uniform},
\[
\begin{aligned}
\mathbb P(A_k^P\cap\{J^k=J\}\mid\mathcal F_k^P)
&\ge2^{-m}\mathbf1_{A_k^P},\\
\mathbb P(A_k^A\cap\{\widehat J^k=J\}\mid\mathcal F_k^A)
&\ge2^{-m}\mathbf1_{A_k^A}.
\end{aligned}
\]
Thus the corresponding conditional-probability sum diverges whenever the
associated index set is infinite.  Applying the conditional Borel--Cantelli
argument used in Lemma~\ref{lem:lpcc-finite-sampling} separately with
$\{\mathcal F_k^P\}$ and $\{\mathcal F_k^A\}$ shows that
$A_k^P\cap\{J^k=J\}$ occurs infinitely often whenever $A_k^P$ does, and
likewise for $A_k^A\cap\{\widehat J^k=J\}$.  By definition, the occurrences
of $A_k^P$ and $A_k^A$ are exactly the indices in $\mathcal K_{J,r}$ and
$\widehat{\mathcal K}_{J,r}$, respectively, so these conclusions are precisely
the two stated implications.  Since $\cP$ is finite and $\mathbb N_+$ is countable, the
implications hold simultaneously for all $(J,r)$, which proves
$\mathbb P(\mathcal E_{\rm fair})=1$.
\end{proof}

On $\mathcal E_{\rm fair}$, the preceding implications, the definition of
$\mathcal U_{J,r}$, and the update in
Algorithm~\ref{alg:threshold-tr} give
\begin{equation}\label{eq:fair-use}
\begin{aligned}
|\mathcal K_{J,r}|=\infty
&\Longrightarrow
f(x^k)-f(x^{k+1})\ge r^{-1}
\quad\text{for infinitely many }k\in\mathcal K_{J,r},\\
|\widehat{\mathcal K}_{J,r}|=\infty
&\Longrightarrow
f(x^k)-f(x^{k+1})\ge r^{-1}
\quad\text{for infinitely many }k\in\widehat{\mathcal K}_{J,r}.
\end{aligned}
\end{equation}
It remains to show that an improving branch at a limit point belongs to one
of these eligibility sets infinitely often.  We first treat the stable-pattern
case.

\begin{lemma}\label{lem:stable}
Suppose Assumption~\ref{ass:compact} holds.  On every realization in
$\mathcal E_{\rm fair}$, let $\{x^k\}$ be an infinite iterate sequence
generated by Algorithm~\ref{alg:threshold-tr} (\TBTA{}), and let
\(x^{k_j}\to\bar x\) be any subsequence satisfying
\[
    \cI_{00}(x^{k_j})=\cI_{00}(\bar x)
    \qquad \forall j.
\]
Then \(\bar x\) is B-stationary.
\end{lemma}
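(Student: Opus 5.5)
We argue by contradiction on the fixed realization in $\mathcal E_{\rm fair}$.  Suppose $\bar x$ is not B-stationary.  By \eqref{eq:descent-cover} there exist $J\in\cP(\bar x)$ and $r\in\mathbb N_+$ with $\bar x\in\mathcal U_{J,r}$.  Since $\mathcal U_{J,r}$ is open and $x^{k_j}\to\bar x$, we have $x^{k_j}\in\mathcal U_{J,r}$ for all sufficiently large $j$.  The goal is then to show that $k_j\in\mathcal K_{J,r}$ for all large $j$. This makes $\mathcal K_{J,r}$ infinite, and \eqref{eq:fair-use} gives $f(x^k)-f(x^{k+1})\ge r^{-1}$ infinitely often, contradicting $f(x^k)-f(x^{k+1})\to0$ from Lemma~\ref{lem:monotonicity}.

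The key step is the compatibility $J\in\cP(x^{k_j})$ for large $j$.  We will use the stable-pattern hypothesis together with continuity.  Write $J=(D_1,D_2)$, so that $\cI_{0+}(\bar x)\subseteq D_1$ and $\cI_{+0}(\bar x)\subseteq D_2$.  For $i\in\cI_{0+}(\bar x)$ we have $\bar x_{2,i}>0$, hence $x^{k_j}_{2,i}>0$ for large $j$.  Feasibility of $x^{k_j}$ (Lemma~\ref{lem:monotonicity}) then forces $x^{k_j}_{1,i}=0$, so $i\in\cI_{0+}(x^{k_j})$.  Symmetrically, $\cI_{+0}(\bar x)\subseteq\cI_{+0}(x^{k_j})$ for large $j$.

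Because $\cI_{00}(x^{k_j})=\cI_{00}(\bar x)$ and the three index sets partition $[m]$, these inclusions must be equalities: $\cI_{0+}(x^{k_j})=\cI_{0+}(\bar x)$ and $\cI_{+0}(x^{k_j})=\cI_{+0}(\bar x)$.  It follows that $\cP(x^{k_j})=\cP(\bar x)\ni J$.  Hence $k_j\in\mathcal K_{J,r}$ for all large $j$, and these indices are distinct since $\{k_j\}$ is a subsequence.

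The main subtlety is confirming that \eqref{eq:fair-use} applies to the realized iterations.  At any $k\in\mathcal K_{J,r}$ with $J^k=J$, the primary \LBTRS{} call is made from $y=x^k\in\mathcal U_{J,r}$ with $\delta_0=\Delta_k\ge\Delta_{\min}$; the inequality $\Delta_k\ge\Delta_{\min}$ holds by induction on \eqref{eq:radius-update}.  This call returns $\mathsf{success}$ with decrease at least $r^{-1}$, so the $\mathsf{zero}$ branch is not taken.  The final selection step then takes the better of the primary and auxiliary trials, so $f(x^{k+1})\le f(x_P^k)\le f(x^k)-r^{-1}$.  Since this is already recorded in \eqref{eq:fair-use}, the contradiction is immediate.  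Therefore $\bar x$ is B-stationary.
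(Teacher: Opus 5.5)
Your proposal is correct and follows essentially the same argument as the paper. The paper also argues by contradiction: it takes $(J,r)$ from \eqref{eq:descent-cover}, uses openness of $\mathcal U_{J,r}$ and $J\in\cP(x^{k_j})$ eventually to get $|\mathcal K_{J,r}|=\infty$, and then plays \eqref{eq:fair-use} against $f(x^k)-f(x^{k+1})\to0$ from Lemma~\ref{lem:monotonicity}. You additionally spell out two details the paper leaves implicit, namely the equality $\cI_{0+}(x^{k_j})=\cI_{0+}(\bar x)$ and $\cI_{+0}(x^{k_j})=\cI_{+0}(\bar x)$, and the invariant $\Delta_k\ge\Delta_{\min}$.
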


\begin{proof}
Fix a realization in $\mathcal E_{\rm fair}$ and suppose that $\bar x$ is
not B-stationary.  Choose $(J,r)$ from
\eqref{eq:descent-cover}, i.e., $J\in\cP(\bar x)$,
$r\in\mathbb N_+$, and $\bar x\in\mathcal U_{J,r}$.  Openness of
$\mathcal U_{J,r}$ gives
$x^{k_j}\in\mathcal U_{J,r}$ for all sufficiently large $j$.  Moreover,
convergence and $\cI_{00}(x^{k_j})=\cI_{00}(\bar x)$ give
$J\in\cP(x^{k_j})$ eventually, and hence $|\mathcal K_{J,r}|=\infty$.
By the first implication in \eqref{eq:fair-use},
$f(x^k)-f(x^{k+1})\ge r^{-1}$ at infinitely many iterations.  On the other
hand, Lemma~\ref{lem:monotonicity} gives convergence of $\{f(x^k)\}$, so
$f(x^k)-f(x^{k+1})\to0$, a contradiction.  Hence $\bar x$ is
B-stationary.
\end{proof}

The preceding lemma handles the stable-pattern case through primary sampling.
For the remaining case, the next deterministic result shows how thresholding
exposes every branch compatible with the limit.

\begin{lemma}\label{lem:one-sided}
Let \(\tau>0\), and let \(\{x^j\}\subseteq\cOmega\) converge to
\(\bar x\in\cOmega\).  Then, for all sufficiently large \(j\),
\begin{equation}\label{eq:branch-inclusion}
    \cP(\bar x)\subseteq\cP(T_\tau(x^j)).
\end{equation}
If additionally
\[
    \cI_{00}(x^j)\subsetneq\cI_{00}(\bar x)
    \qquad\text{for every }j,
\]
then, for all sufficiently large \(j\),
\begin{equation}\label{eq:threshold-nonempty}
    \cS_\tau(x^j)\ne\varnothing.
\end{equation}
\end{lemma}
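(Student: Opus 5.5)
The plan is a componentwise argument over the complementarity pairs. It uses only that $x^j\to\bar x$ and that all points lie in $\cOmega$. No earlier proposition is needed, only the definitions of $\cP(\cdot)$, $\cS_\tau$ and $T_\tau$. Note that $T_\tau(x^j)$ still satisfies $0\le T_\tau(x^j)_1\perp T_\tau(x^j)_2\ge0$, because thresholding only sets entries to zero. Hence the sets $\cI_{+0}$, $\cI_{0+}$ and $\cI_{00}$ are well defined at $T_\tau(x^j)$, and so is $\cP(T_\tau(x^j))$.

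For \eqref{eq:branch-inclusion}, it suffices to show, for all large $j$,
\[
\cI_{0+}(T_\tau(x^j))\subseteq\cI_{0+}(\bar x),
\qquad
\cI_{+0}(T_\tau(x^j))\subseteq\cI_{+0}(\bar x).
\]
Indeed, any $J=(D_1,D_2)\in\cP(\bar x)$ then has $D_1\supseteq\cI_{0+}(\bar x)\supseteq\cI_{0+}(T_\tau(x^j))$, and symmetrically for $D_2$. Equivalently, every index that is biactive at $\bar x$ must be biactive at $T_\tau(x^j)$ for large $j$. The nonbiactive indices at $\bar x$ keep their sign pattern for large $j$, by convergence and by $\|(x_{1,i}^j,x_{2,i}^j)\|\to\|(\bar x_{1,i},\bar x_{2,i})\|>0$. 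Fix $i\in\cI_{00}(\bar x)$. Then $\|(x^j_{1,i},x^j_{2,i})\|\to0$, so for large $j$ this norm is at most $\tau$. Either $i\in\cI_{00}(x^j)$ already, or $i\in\cS_\tau(x^j)$ and thresholding zeros it. In both cases $i\in\cI_{00}(T_\tau(x^j))$.

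Now consider $i\notin\cI_{00}(\bar x)$, say $i\in\cI_{+0}(\bar x)$. For large $j$ we have $x^j_{1,i}>0$, so complementarity gives $x^j_{2,i}=0$. If $i\notin\cS_\tau(x^j)$, then $i\in\cI_{+0}(T_\tau(x^j))$. If $\tau$ is large, $i$ could lie in $\cS_\tau(x^j)$ and be zeroed, becoming biactive at $T_\tau(x^j)$. That only enlarges $\cP(T_\tau(x^j))$ and is harmless for the desired inclusion. Since $m$ is finite, we take the maximum of the finitely many thresholds on $j$.

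For \eqref{eq:threshold-nonempty}, fix large $j$ and pick $i\in\cI_{00}(\bar x)\setminus\cI_{00}(x^j)$, which exists by the strict inclusion. By the argument above, $\|(x^j_{1,i},x^j_{2,i})\|\le\tau$ for large $j$, uniformly over the finitely many indices of $\cI_{00}(\bar x)$. Combined with $i\notin\cI_{00}(x^j)$, this gives $i\in\cS_\tau(x^j)$.

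The main care point is making the "large $j$" threshold uniform over indices. This is handled by finiteness of $[m]$. I expect no genuine obstacle.
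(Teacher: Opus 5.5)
Your proof is correct and follows essentially the same route as the paper's. You reduce the inclusion to $\cI_{0+}(T_\tau(x^j))\subseteq\cI_{0+}(\bar x)$ and $\cI_{+0}(T_\tau(x^j))\subseteq\cI_{+0}(\bar x)$, which hold once a uniform index makes every pair in $\cI_{00}(\bar x)$ biactive at $T_\tau(x^j)$ and every other pair keep its sign pattern or be zeroed; you then get $\cS_\tau(x^j)\ne\varnothing$ from an index in $\cI_{00}(\bar x)\setminus\cI_{00}(x^j)$, exactly as the paper does, and your remark that $T_\tau(x^j)$ still satisfies complementarity (so $\cP(T_\tau(x^j))$ is well defined) makes explicit a point the paper leaves implicit.
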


\begin{proof}
Since $[m]$ is finite and $x^j\to\bar x$, there is $j_0$ such that, for
every $j\ge j_0$,
\[
    \|(x^j_{1,i},x^j_{2,i})\|\le\tau
    \qquad\forall i\in\cI_{00}(\bar x).
\]
Increasing $j_0$ if necessary, the positive component of every pair in
$\cI_{+0}(\bar x)\cup\cI_{0+}(\bar x)$ also remains positive whenever
$j\ge j_0$.
Fix such a $j$.  If $i\in\cI_{00}(\bar x)$, then either
$i\in\cI_{00}(x^j)$ or $i\in\cS_\tau(x^j)$; in both cases,
$i\in\cI_{00}(T_\tau(x^j))$.  If $i\in\cI_{+0}(\bar x)$,
complementarity gives $i\in\cI_{+0}(x^j)$; thresholding either preserves
this pattern or makes the pair biactive.  The same argument applies to
$\cI_{0+}(\bar x)$ with the two components interchanged.  Consequently,
\[
    \cI_{+0}(T_\tau(x^j))\subseteq\cI_{+0}(\bar x),
    \qquad
    \cI_{0+}(T_\tau(x^j))\subseteq\cI_{0+}(\bar x).
\]
For $J=(D_1,D_2)\in\cP(\bar x)$, these inclusions give
$\cI_{0+}(T_\tau(x^j))\subseteq D_1$ and
$\cI_{+0}(T_\tau(x^j))\subseteq D_2$.  Thus
$J\in\cP(T_\tau(x^j))$, proving \eqref{eq:branch-inclusion}.

Under the additional hypothesis, choose
$i_j\in\cI_{00}(\bar x)\setminus\cI_{00}(x^j)$.  For $j\ge j_0$,
$i_j\notin\cI_{00}(x^j)$ and
$\|(x^j_{1,i_j},x^j_{2,i_j})\|\le\tau$; hence
$i_j\in\cS_\tau(x^j)$, which proves
\eqref{eq:threshold-nonempty}.
\end{proof}

This inclusion allows auxiliary sampling to recover branches that become
compatible only at the limit.

\begin{lemma}\label{lem:new-biactive}
Suppose Assumption~\ref{ass:compact} holds.  On every realization in
$\mathcal E_{\rm fair}$, let $\{x^k\}$ be an infinite iterate sequence
generated by Algorithm~\ref{alg:threshold-tr} (\TBTA{}), and let
\(x^{k_j}\to\bar x\) be any subsequence satisfying
\[
    \cI_{00}(x^{k_j})\subsetneq\cI_{00}(\bar x)
    \qquad \forall j.
\]
Then \(\bar x\) is B-stationary.
\end{lemma}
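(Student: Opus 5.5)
We argue by contradiction, mirroring the proof of Lemma~\ref{lem:stable} but routing the decreasing branch through the auxiliary draw instead of the primary one. Fix a realization in $\mathcal E_{\rm fair}$ and suppose $\bar x$ is not B-stationary. By \eqref{eq:descent-cover} we obtain $J\in\cP(\bar x)$ and $r\in\mathbb N_+$ with $\bar x\in\mathcal U_{J,r}$. Since $\mathcal U_{J,r}$ is open, $x^{k_j}\in\mathcal U_{J,r}$ for all large $j$. Lemma~\ref{lem:one-sided}, applied to the convergent sequence $\{x^{k_j}\}\subseteq\cOmega$ together with the strict-inclusion hypothesis, gives $J\in\cP(\bar x)\subseteq\cP(T_\tau(x^{k_j}))$ and $\cS_\tau(x^{k_j})\ne\varnothing$ for all large $j$.

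The step requiring care is showing that the auxiliary draw is actually performed at infinitely many $k_j$, i.e.\ $k_j\in\mathcal D_A$. In Algorithm~\ref{alg:threshold-tr} this requires that the primary search does not return $\mathsf{zero}$; a $\mathsf{zero}$ return would skip lines~12--20, or terminate the algorithm, although the iteration is assumed infinite. We therefore split into two cases. If the primary search returns $\mathsf{success}$ for infinitely many $j$, then, since $\cS_\tau(x^{k_j})\ne\varnothing$ eventually, those indices lie in $\mathcal D_A$. Hence $|\widehat{\mathcal K}_{J,r}|=\infty$, and the second implication of \eqref{eq:fair-use} yields $f(x^k)-f(x^{k+1})\ge r^{-1}$ infinitely often. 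This contradicts $f(x^k)-f(x^{k+1})\to0$ from Lemma~\ref{lem:monotonicity}. Here one should check that selecting the auxiliary candidate (or keeping an even better primary one) still guarantees the decrease $r^{-1}$; this holds because line~21 takes the smaller objective value and the successful auxiliary call has $f(x_A^k)\le f(x^k)-r^{-1}$.

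In the remaining case the primary search returns $\mathsf{zero}$ for all large $j$, so $x^{k_j+1}=x^{k_j}$. To handle this, I would exploit the fact that on a $\mathsf{zero}$ return the iterate and radius are unchanged, so the algorithm keeps redrawing $J^k$ from $\cP(x^{k_j})$ at the same point. Either some redraw eventually gives a successful primary search, so the iteration at that same point lies in $\mathcal D_A$ with $x^k=x^{k_j}\in\mathcal U_{J,r}$ and $J\in\cP(T_\tau(x^k))$; or the iterate stays fixed forever at some $x^{k_j}$. The latter is impossible because then $x^k\to x^{k_j}$ would force $x^{k_j}=\bar x$, contradicting $\cI_{00}(x^{k_j})\subsetneq\cI_{00}(\bar x)$. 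Since the iterate moves infinitely often along the subsequence, every stay at a point of the subsequence ends with a successful primary search at that same point. This produces infinitely many indices in $\widehat{\mathcal K}_{J,r}$, and the contradiction follows as before.

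Making this bookkeeping precise is the expected obstacle. Concretely, I would replace each $k_j$ by the last index $k'_j\ge k_j$ with $x^{k'_j}=x^{k_j}$, which exists because the sequence cannot stall forever at $x^{k_j}$. At $k'_j$ the primary search must return $\mathsf{success}$, since otherwise $x^{k'_j+1}=x^{k'_j}$. The subsequence $x^{k'_j}=x^{k_j}$ has the same limit and biactive sets, so the first case applies directly.
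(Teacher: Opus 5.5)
Your proposal is correct and is essentially the paper's proof: your index $k'_j$, the last index at which the iterate still equals $x^{k_j}$, is exactly the paper's $\ell_j$, the end of the constant block containing $k_j$. At that index a $\mathsf{zero}$ primary return is excluded, so the auxiliary draw is performed and $\ell_j\in\widehat{\mathcal K}_{J,r}$ eventually. The differences are cosmetic: the paper first passes to a subsequence of pairwise distinct points rather than arguing that a permanent stall would force $x^{k_j}=\bar x$, and your preliminary case split is unnecessary because the $k'_j$ construction already covers both cases.
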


\begin{proof}
Fix a realization in $\mathcal E_{\rm fair}$ and suppose that $\bar x$ is
not B-stationary.  Choose $(J,r)$ from
\eqref{eq:descent-cover}, i.e., $J\in\cP(\bar x)$,
$r\in\mathbb N_+$, and $\bar x\in\mathcal U_{J,r}$.  Openness gives
$x^{k_j}\in\mathcal U_{J,r}$ eventually.  A convergent sequence taking only
finitely many distinct values is eventually constant, which would contradict
$\cI_{00}(x^{k_j})\subsetneq\cI_{00}(\bar x)$.  We may therefore pass to a
subsequence for which the points $x^{k_j}$ are pairwise distinct.

The indices $k_j$ need not themselves be iterations at which the iterate
changes.  For each $j$, let $\ell_j$ be the last index of the consecutive
constant block containing $k_j$.  Since $x^{k_{j+1}}\ne x^{k_j}$, this block
ends before $k_{j+1}$, and hence
\[
    k_j\le\ell_j<k_{j+1}\le\ell_{j+1},
    \qquad
    x^{\ell_j}=x^{k_j},
    \qquad
    x^{\ell_j+1}\ne x^{\ell_j}.
\]
Thus every $\ell_j$ is finite and the sequence $\{\ell_j\}$ is strictly
increasing.  At the $\ell_j$-th iteration, a $\mathsf{zero}$ primary return would
either terminate the run at line~7 or set
$x^{\ell_j+1}=x^{\ell_j}$ at line~9, contradicting the infinite run and the
block exit $x^{\ell_j+1}\ne x^{\ell_j}$.  Moreover, line~3 gives
$J^{\ell_j}\in\cP(x^{\ell_j})$, so the zero step is feasible for the initial
primary LP and
$\pi_{J^{\ell_j}}(x^{\ell_j},\Delta_{\ell_j})\ge0$.  Equality would make the
primary call return $\mathsf{zero}$; hence this value is positive.
Corollary~\ref{cor:primary-well-defined} therefore shows that the primary
search returns $\mathsf{success}$, after which lines~13--14 evaluate the
auxiliary-search condition $\cS_\tau^{\ell_j}\ne\varnothing$.

Since $x^{\ell_j}=x^{k_j}\to\bar x$, Lemma~\ref{lem:one-sided} gives, for
all sufficiently large $j$,
\[
    J\in\cP(T_\tau(x^{\ell_j})),
    \qquad
    \cS_\tau(x^{\ell_j})\ne\varnothing.
\]
Thus the condition in line~14 holds for every sufficiently large $j$, and
the auxiliary label is drawn in line~15 at each such iteration $\ell_j$.
Since $x^{\ell_j}\in\mathcal U_{J,r}$ as well, we have
$\ell_j\in\widehat{\mathcal K}_{J,r}$ eventually and
$|\widehat{\mathcal K}_{J,r}|=\infty$.  The second implication in
\eqref{eq:fair-use} gives
$f(x^k)-f(x^{k+1})\ge r^{-1}$ at infinitely many iterations.  This
contradicts $f(x^k)-f(x^{k+1})\to0$, which follows from
Lemma~\ref{lem:monotonicity}.  Hence $\bar x$ is B-stationary.
\end{proof}

\begin{proof}[Theorem~\ref{thm:threshold-bstationary}]
If \TBTA{} terminates, the primary call has returned $\mathsf{zero}$ and the
stopping test gives, for the initial optimal step $d_0^k$,
\[
    \cI_{00}(x^k)=\varnothing,\qquad
    \cP(x^k)=\{J^k\},\qquad
    -\grad f(x^k)\T d_0^k=0.
\]
Hence Proposition~\ref{prop:branch-lp-certificate} proves (i).

For an infinite run, Lemma~\ref{lem:monotonicity} and compactness of
$\cL(x^0)$ give accumulation points.  Recall the probability-one event
$\mathcal E_{\rm fair}$ from Lemma~\ref{lem:fairness}, fix a realization in
this event, and let \(\bar x\) be any accumulation point with
\(x^{k_j}\to\bar x\).  If $i\in\cI_{+0}(\bar x)$, then
$x^{k_j}_{1,i}>0$ for all sufficiently large $j$; since
$x^{k_j}\in\cOmega$, this gives $i\in\cI_{+0}(x^{k_j})$.  The analogous
argument applies to $\cI_{0+}(\bar x)$.  Since $[m]$ is finite, after
discarding finitely many terms,
$\cI_{+0}(\bar x)\subseteq\cI_{+0}(x^{k_j})$ and
$\cI_{0+}(\bar x)\subseteq\cI_{0+}(x^{k_j})$.  Consequently,
$\cI_{00}(x^{k_j})=[m]\setminus
(\cI_{+0}(x^{k_j})\cup\cI_{0+}(x^{k_j}))
\subseteq[m]\setminus(\cI_{+0}(\bar x)\cup\cI_{0+}(\bar x))
=\cI_{00}(\bar x)$.
If \(\cI_{00}(x^{k_j})=\cI_{00}(\bar x)\) for infinitely many \(j\), pass
to that subsequence and apply Lemma~\ref{lem:stable}.  Otherwise
\(\cI_{00}(x^{k_j})\subsetneq\cI_{00}(\bar x)\) holds for all sufficiently
large \(j\), and Lemma~\ref{lem:new-biactive} applies after discarding
finitely many terms.  Since \(\bar x\) was arbitrary, (ii) follows.
\end{proof}

\section{Numerical Experiments}
\label{sec:numerics}
\label{sec:numerical-experiments}

We evaluate the proposed algorithms in three settings.  First, we test the
LPCC methods on LPCCs derived from bilevel models, affine-GNE systems, and
inverse convex QPs.  Second, we test the QPCC method on convex and nonconvex
QPCCs derived from the first two problem families.  Third, we embed the
proposed LPCC solvers into the
MPECopt framework for computing B-stationary points of general
MPCCs~\cite{nurkanovic2025mpecopt}, replacing its Phase-II LPCC subproblem
solver.  On the MacMPEC collection~\cite{MacMPEC}, we compare the resulting
variants with the original MPECopt implementation, in which Gurobi solves each
Phase-II LPCC subproblem through a big-\(M\) MILP reformulation.

\begingroup
\sloppy
The MacMPEC benchmark instances used in this study are available from
\url{https://wiki.mcs.anl.gov/leyffer/index.php/MacMPEC}. All other data and
problem instances used in the numerical experiments can be generated using
the scripts provided in the reproducibility package.
The source code and reviewer-oriented reproducibility repository are available
at \url{https://github.com/SUSTech-Optimization/Reproducibility-for-Randomized-Branch-Inexact-Methods-for-Bouligand-Stationarity-in-LPCC-and-QPCC-/}.
The complete reproducibility package is archived on Zenodo at
\url{https://zenodo.org/records/22133427}.
\par
\endgroup

\subsection{Experimental setup, implementations, and metrics}
\label{subsec:numerical-setup}

\subsubsection{Computing environment and software}
\label{subsubsec:hardware-software}

All experiments were run on a 10-core Apple M4 machine with 16 GB of memory
under macOS 15.6.  The main implementation used Python 3.13.13 with NumPy
2.4.6, SciPy 1.17.1, Pyomo 6.10.1, \texttt{gurobipy} 13.0.2, and HiGHS
1.14.0.  Gurobi was used under a noncommercial license.

\subsubsection{Implementations and comparison methods}
\label{subsubsec:methods-baselines}

\runinhead{\textit{Research implementation for the primal-simplex steps in
			Algorithm~\ref{alg:lpcc-rpsa}}.}
Algorithm~\ref{alg:lpcc-rpsa} requires primal-simplex steps rather than
black-box calls that solve each sampled LP to optimality.  We therefore use
our own implementation of the primal simplex method.  It employs standard
simplex techniques, including Dantzig pricing, Bland's anti-cycling rule,
warm starts, and reusable basis snapshots.  This implementation
is research code written by the authors rather than a highly optimized
general-purpose LP solver and is not expected to match mature commercial or
open-source simplex implementations in raw LP solution speed.  Its role here
is to provide the pivot-level control required by
Algorithm~\ref{alg:lpcc-rpsa}.  Appendix~\ref{app:inhouse-simplex} gives the
pseudocode and implementation details.  The implementation of the in-house simplex
solver is publicly available in the
\href{https://github.com/SUSTech-Optimization/Reproducibility-for-Randomized-Branch-Inexact-Methods-for-Bouligand-Stationarity-in-LPCC-and-QPCC-/tree/main/src/bounded_variable_simplex}{corresponding GitHub repository}.
For brevity, this implementation is denoted by
\emph{A-Simplex} in the tables and figures.

\runinhead{\textit{LPCC methods and baselines}.}
The proposed LPCC methods are \RBLA{} in Algorithm~\ref{alg:lpcc-rbla} and
\RPSA{} in Algorithm~\ref{alg:lpcc-rpsa}.  For each sampled compatible label,
\RBLA{} solves the corresponding branch LP to optimality using either Gurobi
or A-Simplex.  \RPSA{} uses A-Simplex to perform the primal-simplex steps.  Its
initialization LPs are solved with HiGHS and Gurobi to provide a warm start.

The LPCC baselines are the big-\(M\) MILP, IPOPT--Scholtes
\cite{scholtes2001regularization}, LPCC branch-and-cut
\cite{YuMitchellPang2019LPCCBC}, PIP~\cite{ZhangHanPang2026PIP},
CCOpt~\cite{pozharskiy2026ccopt}, and PL-E*
\cite{JaraMoroniPangWachter2018LPCC}.  Gurobi solves the big-\(M\) MILP, the
LPs in branch-and-cut and PL-E*, and the restricted MILPs in PIP.
IPOPT--Scholtes uses IPOPT 3.14.19 through Pyomo, while CCOpt is run with
Julia 1.11.7 and CCOpt 0.1.0 using its package-provided relaxation solver.

\runinhead{\textit{QPCC methods and baselines}.}
The proposed QPCC method is \TBTA{} in
Algorithm~\ref{alg:threshold-tr}.  All primary and auxiliary branch
trust-region LPs in \TBTA{} are solved with Gurobi.

The QPCC baselines are the big-\(M\) MIQP and IPOPT--Scholtes.  Depending on the
experiment, we additionally test MPECopt and its early-stopped variant
MPECopt-early~\cite{nurkanovic2025mpecopt}, CCOpt, LCQPow-D and
LCQPow-OSQP~\cite{hall2024lcqpow}, and
DCA3~\cite{LeThiNguyenPhamDinh2023DCLCC}.
The big-\(M\) baseline is a single MIQP solved by Gurobi, while IPOPT--Scholtes
solves each relaxed NLP with IPOPT.  MPECopt and MPECopt-early solve their LPCC
subproblems through big-\(M\) MILPs with Gurobi and their branch NLPs with IPOPT.
CCOpt uses its package-provided relaxation solver.  The LCQPow variants are
called through local C++ wrappers: LCQPow-D uses the dense qpOASES backend,
whereas LCQPow-OSQP uses the sparse OSQP backend.  Our DCA3 implementation
solves its convex QP subproblems with Gurobi.  We also compare a
straightforward extension of Algorithm~\ref{alg:lpcc-rbla} to QPCCs, obtained
by replacing each branch LP solve with a branch QP solve.  We call this method
the branch QP algorithm (bQPA).  In this comparison, IPOPT seeks a local KKT
point of each sampled branch QP, whereas Gurobi seeks a global minimizer.

\runinhead{\textit{MPECopt framework}.}
MPECopt is a two-phase framework for computing B-stationary points of general
MPCCs~\cite{nurkanovic2025mpecopt}.  Phase I combines regularized NLPs with a
linearized LPCC subproblem to identify a feasible branch NLP (BNLP), while
Phase II alternates between LPCC subproblems and smooth BNLPs.  The BNLP solver
returns a stationary point of the selected branch; the LPCC solver either
provides a nonzero feasible descent point and a new active-set estimate, or
certifies that the zero step is optimal and hence that the current point is
B-stationary.

In the original implementation, Gurobi solves big-\(M\) MILP reformulations of
the LPCC subproblems, while IPOPT solves the regularized NLPs and BNLPs.  In
the MacMPEC embedding test, only the Phase-II Gurobi LPCC solver is replaced
by \RBLA{} or \RPSA{}; the regularization-based Phase I, the IPOPT BNLP solver,
and the outer MPECopt logic remain unchanged.

\subsubsection{Stopping criteria and parameters}
\label{subsubsec:parameters-stopping}

Because of finite-precision arithmetic, numerical stopping is based on
changes between successive iterates.  Write $f^k:=f(x^k)$ and consider
\[
\|x^k-x^{k+1}\|_\infty\le\varepsilon_x,
\qquad
\frac{|f^k-f^{k+1}|}{1+|f^k|}\le\varepsilon_f.
\]
Since a finite run cannot establish that all subsequent iterates remain
unchanged, we regard $p$ consecutive small steps as numerical stabilization;
any non-small step resets the count.  The definition of a small step and the
value of $p$ are method-specific.

\runinhead{\textit{\RBLA{}}.}
A step is small if either test above holds.  We use
$\varepsilon_x=\varepsilon_f=10^{-5}$ and $p=3$ in all experiments.

\runinhead{\textit{\RPSA{}}.}
We set $\rho=20$ for the MacMPEC embedding, $\rho=100$ for the bilevel
experiments, $\rho=5000$ for the affine-GNE experiments, and $\rho=1000$ for
the inverse-QP experiments.
A step is small only if both tests
above hold.  We use $\varepsilon_x=\varepsilon_f=10^{-5}$ and $p=3$ in all
experiments.

\runinhead{\textit{\TBTA{}}.}
We use $\Delta_0=5$, $\Delta_{\min}=10^{-4}$, $\eta_1=0.25$,
$\eta_2=0.6$, $\gamma_{\rm dec}=0.1$, and $\gamma_{\rm inc}=2$.  The threshold
is $\tau=10^{-4}$ for the bilevel instances and $\tau=10^{-5}$ for the
affine-GNE instances.  For the consecutive-small-step criterion, a step is
small if both
\[
\|x^k-x^{k+1}\|_\infty
\le\varepsilon_x(1+\|x^k\|_\infty),
\qquad
\frac{|f^k-f^{k+1}|}{1+|f^k|}\le\varepsilon_f,
\]
where $(\varepsilon_x,\varepsilon_f,p)=(10^{-4},10^{-4},3)$.

After solving the initial primary LP in
Algorithm~\ref{alg:threshold-tr}, \TBTA{} also tests whether the current point
has no biactive complementarity pair and the optimal predicted decrease is
numerically zero.  Let
\[
\pred_0^k:=p_{J^k}(x^k,\Delta_k)
=-\nabla f(x^k)\T d_0^k,
\]
be the optimal predicted decrease of that LP.  The implementation regards
$\pred_0^k<10^{-6}$ as zero and returns if
\[
\cI_{00}(x^k)=\varnothing,
\qquad
\pred_0^k<10^{-6}.
\]
If $\pred_0^k<10^{-6}$ but $\cI_{00}(x^k)\ne\varnothing$, \TBTA{} sets
$x^{k+1}=x^k$ and continues.  The unchanged iteration counts as one small
step, so numerical termination still requires $p$ consecutive small steps.

\runinhead{\textit{Baseline parameters}.}
The big-\(M\) MILP and MIQP baselines are solved by Gurobi under the experiment
time limits.  IPOPT--Scholtes uses a homotopy sequence from $\tau=1$ to
$\tau=10^{-8}$ and IPOPT tolerance $10^{-7}$.  MPECopt and MPECopt-early use
$\rho_0=10^{-3}$, shrink factor $0.1$, $\texttt{tol\_b}=10^{-6}$, and six inner
Phase-II iterations.  CCOpt uses its relaxation solver with package-default
numerical options and the experiment time limit.  LCQPow-D is used in the
bilevel convex QPCC experiment and LCQPow-OSQP in the affine-GNE convex QPCC
experiment; both use the package defaults and experiment time limits.  PIP
uses $p_{\max}=0.6$, $p_{\rm initial}=0.8$, $\alpha=0.1$, $r_{\max}=3$, and the
experiment time limit for each restricted MILP subproblem.  PL-E* starts with
$\rho_0=1$ and updates $\rho\leftarrow10\rho$.  DCA3 starts with $t_1=10$ and,
when an update is triggered, uses $t_{k+1}=10t_k$.

\subsubsection{Metrics for the generated test families}
\label{subsubsec:metrics}

We use the following metrics to compare the algorithms in the bilevel-induced,
inverse-QP-induced, and affine-GNE experiments.  An entry $r/10$ means that
the stated condition
holds in $r$ of the ten runs summarized in that row.  Whether a run reaches
its stopping criterion affects only \textit{Term.}; all other metrics are
reported independently of termination status.

\runinhead{\textbf{Term.}}
This column counts runs in which the method reaches its prescribed numerical
stopping criterion within the time limit.  Thus, $1/10$ means that the
criterion is met in one of the ten runs.

\runinhead{\textbf{Feas.}}
This column counts runs whose returned output is numerically feasible.  An
output is declared feasible if its affine constraint violation is at most
$10^{-6}$ and its complementarity violation is at most $10^{-5}$.  Thus,
$5/10$ means that five of the ten runs return feasible outputs.

\runinhead{\textbf{Max aff. vio.}}
This column reports the largest affine constraint violation over all returned
outputs in the ten runs.

\runinhead{\textbf{Max comp. vio.}}
For a returned output $x=(x_0,x_1,x_2)$, we define its complementarity
violation by
\[
v_{\mathrm{comp}}(x):=
\max_{i\in[m]}\min\bigl\{|x_{1,i}|,|x_{2,i}|\bigr\}.
\]
This column reports the largest value of $v_{\mathrm{comp}}(x)$ over all
returned outputs in the ten runs.

\runinhead{\textbf{Mean objective-improvement gap.}}
For each instance, let $\mathcal F$ contain the methods that return a feasible
output with a finite objective value, and define
\[
\Delta_M:=f(x^0)-f(x_M),
\qquad
\Delta_{\max}:=
\max\bigl(\{0\}\cup\{\Delta_M:M\in\mathcal F\}\bigr).
\]
This column averages
$[\Delta_{\max}-\Delta_M]_+/\max\{1,|\Delta_{\max}|\}$ over the feasible
outputs of method $M$.

\runinhead{\textbf{Near-best observed.}}
This column counts feasible outputs satisfying
$\Delta_M\ge0.95\,\Delta_{\max}$.  The comparison is relative to the best
objective improvement observed on the same instance and does not provide a
global quality guarantee.  Thus, $5/10$ means that five of the ten runs meet
this criterion.

\runinhead{\textbf{Sampled branch-descent diagnostic.}}
For each numerically feasible output $x$, we construct $\cP(x)$ from its
numerically identified complementarity pattern and set
\[
s(x):=\min\bigl\{|\cP(x)|,16,
\max\{8,\lceil0.05|\cP(x)|\rceil\}\bigr\}.
\]
This rule inspects all labels when $|\cP(x)|\le8$; otherwise, the $5\%$
sampling target is clipped to between $8$ and $16$ labels.  We sample $s(x)$
labels uniformly without replacement from $\cP(x)$ and denote the sample by
$\mathcal S(x)$.  Sampling is over labels, so distinct labels are retained
even when they define the same polyhedron.  For each $J\in\mathcal S(x)$,
compute
\[
\theta_J(x):=
\min_d\ \{\nabla f(x)\T d
\mid x+d\in\cOmega_J,\ \|d\|_\infty\le1\},
\]
and define
\[
D_{\rm sbd}(x):=
\max_{J\in\mathcal S(x)}
\frac{[-\theta_J(x)]_+}
{\max\{1,\|\nabla f(x)\|_1\}}.
\]
Thus, $D_{\rm sbd}(x)$ is the largest normalized first-order descent detected
among the sampled compatible labels.  For an exactly feasible $x$ and
$J\in\cP(x)$, $-\theta_J(x)=p_J(x,1)$.  Hence, if all compatible labels are
inspected, the exact condition $D_{\rm sbd}(x)=0$ gives the branchwise
certificate in Proposition~\ref{prop:branch-lp-certificate}.  Because the
reported test may inspect only a subset of the labels and uses the positive
tolerance below, it is an a posteriori diagnostic rather than a
B-stationarity certificate.

\runinhead{\textbf{Mean sampled branch descent.}}
This column reports the arithmetic mean of $D_{\rm sbd}(x)$ over the method's
feasible outputs.

\runinhead{\textbf{Pass sampled branch descent.}}
This column counts feasible outputs satisfying $D_{\rm sbd}(x)\le10^{-3}$.

\runinhead{\textbf{Mean time.}}
This column reports the average computation time, in seconds, over the ten
runs.  The parenthesized value is the standard error.

\subsection{LPCC experiments}
\label{subsec:lpcc-experiments}

The LPCC experiments comprise three problem classes.  The first consists of
bilevel-induced LPCCs obtained by replacing the lower-level problems with
their Karush--Kuhn--Tucker (KKT) systems; these instances are constructed so
that the linear independence constraint qualification (LICQ) fails at the
initial point.  The second consists of sparse affine generalized Nash
equilibrium (affine-GNE) network complementarity systems with linear selection
objectives.  The third consists of LPCC reformulations of inverse convex
quadratic programs generated according to the setting in
\cite{JaraMoroniPangWachter2018LPCC}.

In the tables in this subsection, ``A-Simplex'' denotes the simplex
implementation written by the authors.  A parenthesized name identifies the
LP or MILP solver implementation used by a method.  The abbreviations \textit{Term.},
\textit{Feas.},
\textit{aff. vio.}, \textit{comp. vio.}, \textit{obj. improv.}, and
\textit{samp. br. desc.} stand for termination, feasibility, affine violation,
complementarity violation, objective improvement, and sampled branch descent,
respectively.  The corresponding metrics are defined in
Section~\ref{subsubsec:metrics}.

\newcommand{\methodcell}[1]{%
	\begin{tabular}[c]{@{}l@{}}#1\end{tabular}}

\subsubsection{Bilevel-induced LPCC instances}
\label{subsec:bilevel-instances}
\label{sec:bilevel-degenerate}
\label{subsubsec:bilevel-lpcc}
\label{sec:bilevel-degenerate-lpcc}

We consider bilevel problems of the form
\begin{equation}
	\begin{aligned}
		\min_{x,y}\quad & c_x\T x+c_y\T y \\
		\text{s.t.}\quad
		& -1\le x\le1,\qquad -1\le y\le1, \\
		& y\in\argmin_{\widehat y}\left\{
		\frac12\widehat y\T H\widehat y+(Nx+h)\T\widehat y
		\ \middle|\ G\widehat y+Ex+g\ge0
		\right\},
		\qquad H\succ0.
	\end{aligned}
	\label{prob:bilevel-source-lpcc}
\end{equation}
Replacing the lower-level problem with its KKT system yields the following
equivalent LPCC reformulation:
\begin{equation}
	\begin{aligned}
		\min_{x,y,\lambda,w}\quad & c_x\T x+c_y\T y \\
		\text{s.t.}\quad
		& Hy+Nx+h-G\T\lambda=0, \\
		& w-Gy-Ex-g=0, \\
		& -1\le x\le1,
		\qquad -1\le y\le1, \\
		& 0\le\lambda\perp w\ge0.
	\end{aligned}
	\label{prob:bilevel-generated-lpcc}
\end{equation}
For greater generality, the generated LPCCs use the linear objective $c\T z$,
where $z=(x,y,\lambda,w)$ and the coefficients of $\lambda$ and $w$ may be
nonzero.  We also impose the box constraints $0\le\lambda,w\le10$ in
\eqref{prob:bilevel-generated-lpcc}.

The generator provides a feasible initial point
$z^0=(x^0,y^0,\lambda^0,w^0)$.  At this point, $60\%$ of the lower-level
inequalities are active, and the active constraints are arranged in identical
groups of three.  Their gradients are therefore linearly dependent, so LICQ
fails at $z^0$.  All algorithms start from this point.

All reported instances use $n_x=n_y=400$ and
$m\in\{400,800,1600\}$.  For each value of $m$, we generate ten instances.
The complete generation procedure is given in
Algorithm~\ref{alg:bilevel-lpcc-generation} in
Appendix~\ref{app:bilevel-instance-generation}.

\runinhead{\textbf{Computational results.}}
Table~\ref{tab:lpcc-bilevel-summary} compares \RBLA{}, \RPSA{}, the big-\(M\)
MILP, LPCC branch-and-cut, IPOPT--Scholtes, PIP(0.6), and PL-E*, with Gurobi
used by the indicated baselines.  Here PIP(0.6) denotes PIP with
$p_{\max}=0.6$.  The time limits are $600$, $900$, and $1800$ seconds for
$m=400$, $800$, and $1600$, respectively.

\begin{table}[H]
	\centering
	\caption{Performance on bilevel-induced LPCC instances.}
	\label{tab:lpcc-bilevel-summary}
	\scriptsize
	\setlength{\tabcolsep}{1.5pt}
	\resizebox{\textwidth}{!}{%
		\begin{tabular}{@{}c l@{\hspace{6pt}} r@{\hspace{6pt}} r@{\hspace{6pt}} r r r r r r r@{}}
			\hline
			$m$ & Method & Term. & Feas. & \shortstack{Max aff.\\vio.} & \shortstack{Max comp.\\vio.} & \shortstack{Mean obj.\\improv. gap} & \shortstack{Near-best\\observed} & \shortstack{Mean samp.\\br. desc.} & \shortstack{Pass samp.\\br. desc.} & \shortstack{Mean time\\(s)} \\
			\hline
			400 & \RBLA{} (Gurobi) & 10/10 & 10/10 & $6.4{\times}10^{-14}$ & $0.0$ & 0.0308 & 9/10 & $5.87{\times}10^{-4}$ & 6/10 & 4.305 (0.339) \\
			400 & \RBLA{} (A-Simplex) & 10/10 & 10/10 & $8.3{\times}10^{-12}$ & $0.0$ & 0.0282 & 10/10 & $9.20{\times}10^{-4}$ & 5/10 & 9.707 (0.811) \\
			400 & \RPSA{} (A-Simplex) & 10/10 & 10/10 & $2.9{\times}10^{-13}$ & $0.0$ & 0.0213 & 10/10 & $7.73{\times}10^{-9}$ & 10/10 & 7.568 (0.720) \\
			400 & Big-\(M\) (Gurobi) & 0/10 & 10/10 & $3.8{\times}10^{-11}$ & $0.0$ & 0.0277 & 9/10 & $4.12{\times}10^{-3}$ & 0/10 & 600.037 (0.012) \\
			400 & LPCC branch-and-cut & 0/10 & 10/10 & $6.3{\times}10^{-14}$ & $0.0$ & 0.0283 & 9/10 & $2.93{\times}10^{-3}$ & 3/10 & 600.004 (0.000) \\
			400 & IPOPT--Scholtes & 2/10 & 4/10 & $6.2{\times}10^{-8}$ & $9.4{\times}10^{-5}$ & 0.0029 & 4/10 & $2.72{\times}10^{-7}$ & 4/10 & 566.238 (24.464) \\
			400 & PIP(0.6) (Gurobi) & 0/10 & 10/10 & $2.5{\times}10^{-13}$ & $0.0$ & 0.0007 & 10/10 & $5.40{\times}10^{-6}$ & 10/10 & 600.036 (0.002) \\
			400 & PL-E* (Gurobi) & 10/10 & 10/10 & $5.8{\times}10^{-14}$ & $0.0$ & 0.2975 & 0/10 & $5.07{\times}10^{-2}$ & 0/10 & 2.922 (0.047) \\
			\hline
			800 & \RBLA{} (Gurobi) & 10/10 & 10/10 & $1.8{\times}10^{-13}$ & $0.0$ & 0.0537 & 5/10 & $3.91{\times}10^{-4}$ & 5/10 & 14.348 (1.002) \\
			800 & \RBLA{} (A-Simplex) & 10/10 & 10/10 & $6.5{\times}10^{-13}$ & $0.0$ & 0.0601 & 3/10 & $4.19{\times}10^{-4}$ & 6/10 & 41.399 (2.096) \\
			800 & \RPSA{} (A-Simplex) & 10/10 & 10/10 & $4.4{\times}10^{-13}$ & $0.0$ & 0.0530 & 7/10 & $2.38{\times}10^{-7}$ & 10/10 & 35.183 (2.454) \\
			800 & Big-\(M\) (Gurobi) & 0/10 & 10/10 & $5.0{\times}10^{-11}$ & $0.0$ & 0.0573 & 5/10 & $9.04{\times}10^{-3}$ & 0/10 & 900.049 (0.013) \\
			800 & LPCC branch-and-cut & 0/10 & 10/10 & $1.6{\times}10^{-13}$ & $0.0$ & 0.0729 & 2/10 & $1.27{\times}10^{-3}$ & 2/10 & 900.007 (0.001) \\
			800 & IPOPT--Scholtes & 0/10 & 0/10 & $3.4{\times}10^{-5}$ & $8.0{\times}10^{-3}$ & -- & 0/10 & -- & 0/10 & 875.203 (26.990) \\
			800 & PIP(0.6) (Gurobi) & 0/10 & 10/10 & $9.8{\times}10^{-10}$ & $8.6{\times}10^{-15}$ & 0.0010 & 10/10 & $1.13{\times}10^{-4}$ & 7/10 & 900.058 (0.005) \\
			800 & PL-E* (Gurobi) & 10/10 & 10/10 & $1.6{\times}10^{-13}$ & $0.0$ & 0.3875 & 0/10 & $5.54{\times}10^{-2}$ & 0/10 & 6.780 (0.085) \\
			\hline
			1600 & \RBLA{} (Gurobi) & 10/10 & 10/10 & $3.8{\times}10^{-13}$ & $0.0$ & 0.1150 & 0/10 & $5.86{\times}10^{-5}$ & 8/10 & 98.029 (5.343) \\
			1600 & \RBLA{} (A-Simplex) & 9/10 & 10/10 & $7.6{\times}10^{-12}$ & $0.0$ & 0.1462 & 0/10 & $1.88{\times}10^{-3}$ & 3/10 & 962.381 (114.855) \\
			1600 & \RPSA{} (A-Simplex) & 10/10 & 10/10 & $1.6{\times}10^{-12}$ & $0.0$ & 0.1136 & 0/10 & $2.30{\times}10^{-7}$ & 10/10 & 398.248 (15.336) \\
			1600 & Big-\(M\) (Gurobi) & 0/10 & 10/10 & $4.7{\times}10^{-11}$ & $0.0$ & 0.3257 & 0/10 & $4.83{\times}10^{-2}$ & 0/10 & 1800.133 (0.020) \\
			1600 & LPCC branch-and-cut & 0/10 & 10/10 & $4.6{\times}10^{-13}$ & $0.0$ & 0.2110 & 0/10 & $5.70{\times}10^{-3}$ & 0/10 & 1800.017 (0.003) \\
			1600 & IPOPT--Scholtes & 0/10 & 0/10 & $5.4{\times}10^{-14}$ & $9.9{\times}10^{-1}$ & -- & 0/10 & -- & 0/10 & 1603.954 (170.196) \\
			1600 & PIP(0.6) (Gurobi) & 0/10 & 10/10 & $4.0{\times}10^{-12}$ & $0.0$ & 0 & 10/10 & $2.05{\times}10^{-4}$ & 6/10 & 1800.098 (0.003) \\
			1600 & PL-E* (Gurobi) & 10/10 & 10/10 & $3.9{\times}10^{-13}$ & $0.0$ & 0.5687 & 0/10 & $5.32{\times}10^{-2}$ & 0/10 & 25.885 (0.544) \\
			\hline
		\end{tabular}
	}
\end{table}

PIP(0.6), implemented with Gurobi, attains the largest near-best-observed
counts and negligible mean objective-improvement gaps at all tested
dimensions.  Its progressive
mixed-integer programming (MIP) subproblems explore complementarity patterns
more broadly and thereby strengthen the search.
This comes at a cost: PIP solves multiple mixed-integer subproblems and does
not terminate within the time limit on these large instances.  By comparison,
\RPSA{} terminates on every instance, has the smallest mean sampled branch
descent, and passes the sampled branch-descent diagnostic in every run.  It
also attains a competitive near-best-observed count at $m=400$.
As $m$ increases, the branch structure becomes more complex, making it harder
for a local simplex-based search to attain very small objective-improvement
gaps.

Next, we compare \RBLA{} and \RPSA{} to illustrate the advantage of replacing
complete branch-LP solves with individual simplex steps.
Table~\ref{tab:lpcc-inhouse-step-summary} reports this comparison, with both
methods using the same A-Simplex implementation.  All entries are means
over the ten instances at each value of $m$.  ``Mean obj.'' is the mean final
objective value.  ``Mean calls'' counts complete branch-LP solves for \RBLA{}
and simplex vertex-search calls for \RPSA{}; in both cases, it also equals the
mean number of outer iterations.  ``Mean steps'' counts basis-changing simplex
steps, and ``Mean time (s)'' is the mean computation time in seconds.  Across
all dimensions, \RPSA{} attains a lower mean objective value with fewer steps
and less computation time.  At $m=1600$, it uses about $52\%$ of \RBLA{}'s
steps and $41\%$ of its runtime.

\begin{table}[H]
	\centering
	\caption{Step counts for \RBLA{} and \RPSA{} with the same A-Simplex
		implementation.}
	\label{tab:lpcc-inhouse-step-summary}
	\scriptsize
	\setlength{\tabcolsep}{5pt}
	\begin{tabular}{c l@{\hspace{10pt}} r r r r}
		\hline
		$m$ & Method & \shortstack{Mean\\obj.} & \shortstack{Mean\\calls} & \shortstack{Mean\\steps} & \shortstack{Mean time\\(s)} \\
		\hline
		400 & \RBLA{} (A-Simplex) & -18.8489 & 8.4 & 1779.8 & 9.707 \\
		400 & \RPSA{} (A-Simplex) & -18.9822 & 1080.3 & 1077.0 & 7.568 \\
		\hline
		800 & \RBLA{} (A-Simplex) & -23.8950 & 17.0 & 4275.4 & 41.399 \\
		800 & \RPSA{} (A-Simplex) & -24.0791 & 2640.4 & 2636.4 & 35.183 \\
		\hline
		1600 & \RBLA{} (A-Simplex) & -41.5030 & 49.1 & 34888.5 & 962.381 \\
		1600 & \RPSA{} (A-Simplex) & -42.9848 & 18269.5 & 18265.8 & 398.248 \\
		\hline
	\end{tabular}
\end{table}

The larger number of \RPSA{} calls therefore does not indicate more LP work:
\RPSA{} stops at improving simplex vertices instead of optimizing each sampled
branch LP, thereby reducing the total number of basis-changing simplex steps.

\runinhead{\textit{$\rho$-sensitivity experiment}.}
We tested \RPSA{} with $\rho\in\{0.1,1,10,100,1000\}$ on all ten LPCC
instances with $m=1600$.  For $\rho=0.1$, none of the ten returned points
satisfies complementarity, whereas all ten outputs are feasible for
$\rho\ge1$.  At $\rho=1$, only one of the ten endpoints agrees numerically
with its corresponding endpoint for $\rho=100$.  By contrast, for
$\rho\in\{10,100,1000\}$, all ten endpoints agree within the $10^{-8}$
comparison tolerance and have the same objective values and reported
algorithmic counts.  Nine endpoints are bit-for-bit identical; for
\texttt{ins009}, the only non-bitwise-identical case, the maximum coordinate
difference is below $6\times10^{-11}$.  Thus, the tested values $\rho\ge10$
form a stable empirical penalty plateau.  This behavior is consistent with a
sufficiently large penalty on these instances, but it neither verifies
$\rho>\bar\rho$ nor estimates a theoretical penalty threshold.

The explicit test $\psi_{J^k}(z)=0$ in Algorithm~\ref{alg:lpcc-rpsa} ensures
that every accepted iterate of the exact algorithm is complementarity
feasible.  Its convergence theorem additionally requires $\rho>\bar\rho$ and
completion of each sampled search until either an acceptable vertex is found
or penalty-LP optimality is reached.  The capped implementation used here
omits the explicit complementarity test at intermediate improving vertices.
Proposition~\ref{prop:lpcc-branch-penalty-equivalence} guarantees branch
feasibility of penalty-LP optimizers for a sufficiently large $\rho$, but not
of intermediate simplex vertices; hence
Theorem~\ref{thm:lpcc-rpsa-convergence} does not apply to this implementation.
The $\rho=0.1$ stress test exposes the resulting possibility of terminating in
$\cC\setminus\cOmega$.

Table~\ref{tab:rpsa-rho-sensitivity} reports detailed results for the
representative instance \texttt{ins000}.  The five runs were performed
serially, with a 600-second limit for each value of $\rho$.  Objective
improvement is reported only for feasible outputs.  Because each row
represents one run, the check marks and crosses in \textit{Term.},
\textit{Feas.}, and \textit{Pass samp. br. desc.} indicate whether that run
satisfies the corresponding condition.

\begin{table}[H]
	\centering
	\caption{\RPSA{} penalty-parameter sensitivity on the $m=1600$ instance
		\texttt{ins000}.}
	\label{tab:rpsa-rho-sensitivity}
	\scriptsize
	\setlength{\tabcolsep}{3.5pt}
	\begin{tabular}{@{}r r r r r r r r@{}}
		\hline
		$\rho$ & Term. & Feas. & \shortstack{Max aff.\\vio.} & \shortstack{Max comp.\\vio.} & \shortstack{Objective\\improvement} & \shortstack{Pass samp.\\br. desc.} & Time (s) \\
		\hline
		$0.1$  & $\checkmark$ & $\times$     & $1.03{\times}10^{-12}$ & $1.11$ & --      & $\times$     & 368.207 \\
		$1$    & $\checkmark$ & $\checkmark$ & $1.26{\times}10^{-12}$ & $0.0$  & 39.6812 & $\checkmark$ & 345.471 \\
		$10$   & $\checkmark$ & $\checkmark$ & $1.15{\times}10^{-12}$ & $0.0$  & 40.0267 & $\checkmark$ & 323.979 \\
		$100$  & $\checkmark$ & $\checkmark$ & $1.15{\times}10^{-12}$ & $0.0$  & 40.0267 & $\checkmark$ & 319.251 \\
		$1000$ & $\checkmark$ & $\checkmark$ & $1.15{\times}10^{-12}$ & $0.0$  & 40.0267 & $\checkmark$ & 332.543 \\
		\hline
	\end{tabular}
\end{table}

\subsubsection{Affine-GNE network LPCC instances}
\label{subsec:gne-instances}
\label{sec:gne-instances}
\label{subsubsec:gne-lpcc}
\label{sec:gne-lpcc}

We use the variational-equilibrium specialization of the affine-GNE model in
Section~4 of \cite{SchiroPangShanbhag2013}.  This model yields sparse,
large-scale affine complementarity systems.  The linear and quadratic
selection objectives follow the quadratic programming with equilibrium
constraints (QPEC) viewpoint, which combines an upper-level objective with
affine variational inequality (VI) or linear complementarity problem (LCP)
constraints~\cite{jiang1999qpecgen}.

Let $N$ be the dimension of $x$ and let $L$ be the number of shared affine
constraints.  The matrices and vectors satisfy
$H\in\mathbb R^{N\times N}$, $A\in\mathbb R^{L\times N}$,
$b,u\in\mathbb R^N$, and $d\in\mathbb R^L$.  The common feasible set is
defined by
\[
\begin{aligned}
	0\le x &\perp Hx+b+\alpha+A\T\lambda\ge0,\\
	0\le \alpha &\perp u-x\ge0,\\
	0\le \lambda &\perp d-Ax\ge0.
\end{aligned}
\]
Hence there are $m=2N+L$ complementarity pairs.  With
$z=(x,\alpha,\lambda)\in\mathbb R^m$, set
\[
q=\begin{bmatrix}b\\u\\d\end{bmatrix},\qquad
M=\begin{bmatrix}
	H&I&A\T\\
	-I&0&0\\
	-A&0&0
\end{bmatrix}.
\]
Thus, $w=q+Mz=(Hx+b+\alpha+A\T\lambda,u-x,d-Ax)$, and the system is
$0\le z\perp w\ge0$.  Setting $y=(z,w)\in\mathbb R^{2m}$ gives the standard
form
\[
[-M\ I]y=q,\qquad 0\le z\perp w\ge0.
\]

For the LPCC experiments, we use the linear selection problem
\[
\min_y\ \bar c\T y=\min_x\ c\T x,
\]
where all coefficients outside the $x$-block are zero.  Every stored sparse
matrix in the reported affine-GNE instances has density below $2\%$.
Because the Gurobi big-\(M\) MILP reformulation already fails to terminate
within the prescribed time limits at $m=800$ and $1600$ in the preceding
bilevel LPCC experiments, we omit it from the substantially larger affine-GNE
LPCC experiment.  The constraint-generation procedure is given in
Appendix~\ref{app:gne-constraint-generation}.

\runinhead{\textbf{Instance generation.}}
The reported LPCC instances use the linear selection problem above with
$N=4008$, $L=1984$, and $m=10000$ complementarity pairs, giving a
standard-form dimension of $20000$.
Table~\ref{tab:gne-lpcc-summary} reports the computational results.

\begin{table}[htbp]
	\centering
	\caption{Performance on affine-GNE network LPCC instances with $N=4008$ and
		$m=10000$.}
	\label{tab:gne-lpcc-summary}
	\scriptsize
	\setlength{\tabcolsep}{1.5pt}
	\resizebox{\textwidth}{!}{%
		\begin{tabular}{@{}l@{\hspace{6pt}} r@{\hspace{6pt}} r@{\hspace{6pt}} r r r r r r r@{}}
			\hline
			Method & Term. & Feas. & \shortstack{Max aff.\\vio.} & \shortstack{Max comp.\\vio.} & \shortstack{Mean obj.\\improv. gap} & \shortstack{Near-best\\observed} & \shortstack{Mean samp.\\br. desc.} & \shortstack{Pass samp.\\br. desc.} & \shortstack{Mean time\\(s)} \\
			\hline
			\RBLA{} (Gurobi) & 10/10 & 10/10 &
			$9.45{\times}10^{-8}$ & $3.17{\times}10^{-10}$ & $1.41{\times}10^{-5}$ &
			10/10 & $6.79{\times}10^{-12}$ & 10/10 & 67.13 (3.94) \\
			\RBLA{} (A-Simplex) & 0/10 & 10/10 &
			$1.14{\times}10^{-8}$ & $2.19{\times}10^{-9}$ & 0.0241 &
			9/10 & $1.50{\times}10^{-3}$ & 3/10 & 1200.25 (0.14) \\
			\RPSA{} (A-Simplex) & 10/10 & 10/10 &
			$6.24{\times}10^{-11}$ & $2.24{\times}10^{-11}$ & $1.41{\times}10^{-5}$ &
			10/10 & $1.71{\times}10^{-12}$ & 10/10 & 208.88 (16.68) \\
			PIP(0.6) (Gurobi) & 10/10 & 10/10 &
			$3.55{\times}10^{-15}$ & $0$ & 1 &
			0/10 & $4.08{\times}10^{-2}$ & 0/10 & 450.76 (0.03) \\
			IPOPT--Scholtes & 5/10 & 1/10 &
			$5.58{\times}10^{-4}$ & $6.22{\times}10^{-4}$ & 0 &
			1/10 & $0$ & 1/10 & 1033.94 (54.72) \\
			CCOpt & 7/10 & 3/10 &
			$1.54{\times}10^{-10}$ & $9.24{\times}10^{-5}$ & 1 &
			0/10 & $3.96{\times}10^{-2}$ & 0/10 & 929.65 (83.82) \\
			PL-E* (Gurobi) & 0/10 & 0/10 &
			$1.27{\times}10^{-11}$ & $1.96$ & -- &
			0/10 & -- & 0/10 & 69.83 (1.95) \\
			\hline
		\end{tabular}%
	}
\end{table}

Both \RBLA{} with Gurobi and \RPSA{} with A-Simplex produce
near-best-observed outputs and pass the sampled branch-descent diagnostic in
all ten runs.  \RBLA{} with Gurobi is fastest, while under the same A-Simplex
LP solver \RPSA{} terminates on all instances whereas \RBLA{} reaches the time
limit, showing the benefit of avoiding repeated complete branch-LP solves.
PIP leaves the initial feasible point unchanged because its restricted MIP
subproblems reach their prescribed time limits without finding an improving
solution.  Thus, unlike on the smaller bilevel LPCC instances, its stronger
mixed-integer search does not translate into an improved solution at this
scale.

\subsubsection{LPCC reformulations of inverse quadratic programs}
\label{subsubsec:inverse-qp-lpcc}

\runinhead{\textbf{Instance generation.}}
We follow the inverse-QP construction and parameter setting in
\cite[Section~6.3]{JaraMoroniPangWachter2018LPCC}.  Let
$Q\in\mathbb R^{r\times r}$ be symmetric positive definite,
$A\in\mathbb R^{m\times r}$, and
$(\bar x,\bar b,\bar c)\in\mathbb R^r\times\mathbb R^m\times\mathbb R^r$
be given reference vectors.  The inverse problem is
\begin{equation}
	\begin{aligned}
		\min_{x,b,c}\quad
		& \|x-\bar x\|_1+\|b-\bar b\|_1+\|c-\bar c\|_1 \\
		\text{s.t.}\quad
		& x\in\argmin_{y\in\mathbb R^r}
		\left\{\frac12y\T Qy+c\T y\ \middle|\ Ay\ge b\right\}.
	\end{aligned}
	\label{prob:inverse-qp}
\end{equation}
Let $u^x,u^b,u^c,u^\lambda>0$ be the bound values used in the reference
construction.  Since $Q\succ0$, replacing the lower-level problem by its KKT
conditions and introducing auxiliary variables
$z^x\in\mathbb R^r$, $z^b\in\mathbb R^m$, and
$z^c\in\mathbb R^r$ to linearize the $\ell_1$-objective gives the LPCC
\begin{equation}
	\begin{aligned}
		\min_{x,b,c,z^x,z^b,z^c,\lambda,w}\quad
		& \bm 1\T z^x+\bm 1\T z^b+\bm 1\T z^c \\
		\text{s.t.}\quad
		& Qx+c-A\T\lambda=0, \\
		& w-Ax+b=0, \\
		& -z^x\le x-\bar x\le z^x, \\
		& -z^b\le b-\bar b\le z^b, \\
		& -z^c\le c-\bar c\le z^c, \\
		& -u^x\le x\le u^x,\quad -u^b\le b\le u^b, \\
		& -u^c\le c\le u^c, \\
		& z^x,z^b,z^c\ge0, \\
		& 0\le\lambda\perp w\ge0,\quad \lambda\le u^\lambda.
	\end{aligned}
	\label{prob:inverse-qp-lpcc}
\end{equation}
This is the standard LPCC form used in this paper and is equivalent to
formulation (33) in the cited paper through $w=Ax-b$.  Hence, the resulting
LPCC has $m$ complementarity pairs.

Following the reference dimension rule, we set $r=0.75m$.  The parameter
$s$ is the target fraction of nonzero entries used when generating $A$ and
the sparse symmetric matrix $Q$.  We set $s=\min\{1,10/r\}$, so each row has
about ten nonzero entries on average.  The eigenvalues of $Q$ lie in
$[0.5,1]$.  A feasible KKT point $(x,b,c,\lambda,w)$ is generated first,
after which $(x,b,c)$ is perturbed to form $(\bar x,\bar b,\bar c)$.  Before
the $4r+2m=5m$ linear inequalities representing the $\ell_1$-objective are
converted to equalities with nonnegative slacks,
\eqref{prob:inverse-qp-lpcc} has $r+m=7m/4$ linear equalities.  The resulting
solver form has $12m$ variables and $27m/4$ equalities.  We generate ten
instances for each $m\in\{500,1000\}$; the corresponding solver forms have
$6000$ and $12000$ variables and $3375$ and $6750$ equalities, respectively.
All methods start from the generated feasible KKT point.

\runinhead{\textbf{Computational results.}}
Table~\ref{tab:inverse-qp-lpcc-summary} compares \RBLA{} with Gurobi and
A-Simplex, \RPSA{} with A-Simplex, IPOPT--Scholtes, and PL-E*.  In this
experiment, \RBLA{} (Gurobi) uses the barrier method without crossover or
presolve, while \RPSA{} uses $\rho=1000$ and at most $500$ simplex pivots per
sampled piece.  The time limits are $120$ seconds for $m=500$ and $600$
seconds for $m=1000$.

\begin{table}[H]
	\centering
	\caption{Performance on inverse-QP-induced LPCC instances.}
	\label{tab:inverse-qp-lpcc-summary}
	\scriptsize
	\setlength{\tabcolsep}{1.5pt}
	\resizebox{\textwidth}{!}{%
		\begin{tabular}{@{}c l@{\hspace{6pt}} r@{\hspace{6pt}} r@{\hspace{6pt}} r r r r r r r@{}}
			\hline
			$m$ & Method & Term. & Feas. & \shortstack{Max aff.\\vio.} & \shortstack{Max comp.\\vio.} & \shortstack{Mean obj.\\improv. gap} & \shortstack{Near-best\\observed} & \shortstack{Mean samp.\\br. desc.} & \shortstack{Pass samp.\\br. desc.} & \shortstack{Mean time\\(s)} \\
			\hline
			500 & \RBLA{} (Gurobi) & 10/10 & 10/10 & $1.69{\times}10^{-7}$ & $2.60{\times}10^{-13}$ & 0.0075 & 10/10 & $4.30{\times}10^{-5}$ & 10/10 & 3.138 (0.319) \\
			500 & \RBLA{} (A-Simplex) & 0/10 & 10/10 & $2.13{\times}10^{-14}$ & $0$ & 1 & 0/10 & $2.58{\times}10^{-1}$ & 0/10 & 120.001 (0.001) \\
			500 & \RPSA{} (A-Simplex) & 10/10 & 10/10 & $2.38{\times}10^{-13}$ & $0$ & 0.0059 & 10/10 & $1.30{\times}10^{-6}$ & 10/10 & 25.162 (2.068) \\
			500 & IPOPT--Scholtes & 9/10 & 9/10 & $2.84{\times}10^{-14}$ & $5.86{\times}10^{-5}$ & 0 & 9/10 & $2.18{\times}10^{-8}$ & 9/10 & 72.911 (6.566) \\
			500 & PL-E* (Gurobi) & 10/10 & 10/10 & $1.57{\times}10^{-10}$ & $0$ & 0.0198 & 10/10 & $2.31{\times}10^{-3}$ & 1/10 & 5.286 (0.137) \\
			\hline
			1000 & \RBLA{} (Gurobi) & 10/10 & 10/10 & $7.57{\times}10^{-9}$ & $1.10{\times}10^{-14}$ & 0.0051 & 10/10 & $6.04{\times}10^{-6}$ & 10/10 & 9.555 (0.519) \\
			1000 & \RBLA{} (A-Simplex) & 0/10 & 10/10 & $2.13{\times}10^{-14}$ & $0$ & 1 & 0/10 & $2.57{\times}10^{-1}$ & 0/10 & 600.003 (0.001) \\
			1000 & \RPSA{} (A-Simplex) & 10/10 & 10/10 & $8.37{\times}10^{-13}$ & $0$ & 0.0033 & 10/10 & $1.96{\times}10^{-6}$ & 10/10 & 311.654 (17.123) \\
			1000 & IPOPT--Scholtes & 5/10 & 5/10 & $3.55{\times}10^{-14}$ & $1.43{\times}10^{-4}$ & 0 & 5/10 & $2.18{\times}10^{-8}$ & 5/10 & 483.154 (45.291) \\
			1000 & PL-E* (Gurobi) & 10/10 & 10/10 & $2.39{\times}10^{-11}$ & $0$ & 0.0140 & 10/10 & $2.00{\times}10^{-3}$ & 0/10 & 23.460 (0.864) \\
			\hline
		\end{tabular}%
	}
\end{table}

Both \RBLA{} with Gurobi and \RPSA{} with A-Simplex consistently return
near-best-observed outputs and pass the sampled branch-descent diagnostic.
\RBLA{} gives the best overall efficiency with a mature sparse LP solver,
whereas \RPSA{} combines comparable reliability with slightly better
objective-improvement gaps using the research simplex implementation.  The
contrast with \RBLA{} under the same A-Simplex backend indicates that
pivotwise search scales more effectively on this problem class than repeated
complete branch-LP solves.  The remaining methods do not provide the same
combination of reliability and sampled branch-descent quality.

\subsection{QPCC experiments}
\label{subsec:qpcc-experiments}

The QPCC experiments consider instances derived from bilevel problems and
affine-GNE constraint families, with both convex and nonconvex quadratic
objectives.

\subsubsection{Bilevel-induced QPCC instances}
\label{subsubsec:bilevel-qpcc}
\label{sec:bilevel-degenerate-qpcc}

The bilevel-induced QPCC instances are generated using the same
feasibility-by-construction principle as the LPCC instances in
Section~\ref{subsubsec:bilevel-lpcc}.  The QPCC-specific dimensions,
duplication ratio, bounds, slack distribution, and multiplier rule are given
in Appendix~\ref{app:bilevel-instance-generation}.  Let
$z=(x,y,\lambda,w)$.  The upper-level objective is
$\frac12 z\T Qz+q\T z$.  For the convex instances, we set
$Q=Q_{\rm psd}=R\T R+10^{-4}I$.  For the nonconvex instances, we apply the
low-rank shift
\[
Q=Q_{\rm psd}-0.05\lambda_{\max}(Q_{\rm psd})UU\T,
\]
where the columns of $U$ are the ten eigenvectors associated with the smallest
eigenvalues of $Q_{\rm psd}$.  The entries of $q$ are sampled from $[-1,1]$.
For each objective class, we generate ten instances for every
$m\in\{400,800,1600\}$; the corresponding QPCC dimensions are
$n=1200,2000,3600$.

\runinhead{\textbf{Computational results.}}

On the convex instances, Table~\ref{tab:qpcc-bilevel-psd-summary} compares
\TBTA{}, MPECopt, LCQPow-D, and IPOPT--Scholtes.  The time limits are $180$
seconds for $m=400$, $500$ seconds for $m=800$, and $1200$ seconds for
$m=1600$.

\begin{table}[H]
	\centering
	\caption{Performance on bilevel-induced convex QPCC instances.}
	\label{tab:qpcc-bilevel-psd-summary}
	\scriptsize
	\setlength{\tabcolsep}{1.8pt}
	\resizebox{\textwidth}{!}{%
		\begin{tabular}{@{}c l@{\hspace{5pt}} r r r r r r r r r@{}}
			\hline
			$m$ & Method & Term. & Feas. & \shortstack{Max aff.\\vio.} & \shortstack{Max comp.\\vio.} & \shortstack{Mean obj.\\improv. gap} & \shortstack{Near-best\\observed} & \shortstack{Mean samp.\\br. desc.} & \shortstack{Pass samp.\\br. desc.} & \shortstack{Mean time\\(s)} \\
			\hline
			400 & \TBTA{} & 10/10 & 10/10 & $5.13{\times}10^{-14}$ & $0$ & 0.252 & 0/10 & $1.28{\times}10^{-4}$ & 10/10 & 33.9 (3.2) \\
			400 & MPECopt & 10/10 & 10/10 & $1.43{\times}10^{-12}$ & $4.49{\times}10^{-12}$ & 0.246 & 0/10 & $3.12{\times}10^{-9}$ & 10/10 & 76.0 (7.5) \\
			400 & LCQPow-D & 10/10 & 10/10 & $4.22{\times}10^{-15}$ & $2.66{\times}10^{-16}$ & 0 & 10/10 & $3.57{\times}10^{-17}$ & 10/10 & 44.2 (0.9) \\
			400 & IPOPT--Scholtes & 0/10 & 4/10 & $1.50{\times}10^{-5}$ & $1.01{\times}10^{-3}$ & 0.0841 & 0/10 & $3.99{\times}10^{-2}$ & 0/10 & 183.7 (0.3) \\
			\hline
			800 & \TBTA{} & 10/10 & 10/10 & $1.10{\times}10^{-13}$ & $0$ & 0.327 & 0/10 & $9.49{\times}10^{-5}$ & 10/10 & 107.2 (14.1) \\
			800 & MPECopt & 10/10 & 10/10 & $2.29{\times}10^{-12}$ & $5.42{\times}10^{-12}$ & 0.342 & 0/10 & $1.38{\times}10^{-8}$ & 10/10 & 252.2 (10.7) \\
			800 & LCQPow-D & 10/10 & 10/10 & $5.55{\times}10^{-15}$ & $3.75{\times}10^{-15}$ & 0 & 10/10 & $1.22{\times}10^{-17}$ & 10/10 & 260.4 (5.4) \\
			800 & IPOPT--Scholtes & 0/10 & 0/10 & $1.62{\times}10^{-2}$ & $5.53{\times}10^{-2}$ & -- & 0/10 & -- & 0/10 & 510.5 (0.8) \\
			\hline
			1600 & \TBTA{} & 10/10 & 10/10 & $1.67{\times}10^{-14}$ & $0$ & 0.0112 & 9/10 & $6.19{\times}10^{-5}$ & 10/10 & 193.2 (32.5) \\
			1600 & MPECopt & 10/10 & 10/10 & $1.60{\times}10^{-12}$ & $4.29{\times}10^{-12}$ & 0.0282 & 7/10 & $1.31{\times}10^{-9}$ & 10/10 & 674.3 (38.2) \\
			1600 & LCQPow-D & 0/10 & 3/10 & $3.21{\times}10^{-11}$ & $1.74$ & 1 & 0/10 & $8.58{\times}10^{-2}$ & 0/10 & 1200.4 (0.2) \\
			1600 & IPOPT--Scholtes & 0/10 & 0/10 & $8.08{\times}10^{-2}$ & $2.49{\times}10^{-2}$ & -- & 0/10 & -- & 0/10 & 1230.4 (2.6) \\
			\hline
		\end{tabular}
	}
\end{table}

At $m=400$ and $800$, LCQPow-D has a zero mean gap in objective improvement.
It also produces ten near-best-observed outputs and passes the sampled
branch-descent diagnostic in all ten runs.  \TBTA{} and MPECopt pass this
diagnostic in every run at all three dimensions.  At $m=1600$, LCQPow-D no
longer scales with the dense qpOASES QP solver used here, consistent with the
increasing cost of its dense QP subproblems, whereas \TBTA{} produces more
near-best-observed outputs than MPECopt in less than one-third of its mean
computation time.
Overall, \TBTA{} provides the best balance of objective improvement and
computation time as the dimension increases.

Table~\ref{tab:qpcc-bilevel-summary} compares \TBTA{}, MPECopt,
IPOPT--Scholtes, big-\(M\) Gurobi, and DCA3-Gurobi on the nonconvex instances under
the same time limits.

\begin{table}[H]
	\centering
	\caption{Performance on bilevel-induced nonconvex QPCC instances.}
	\label{tab:qpcc-bilevel-summary}
	\scriptsize
	\setlength{\tabcolsep}{1.8pt}
	\resizebox{\textwidth}{!}{%
		\begin{tabular}{@{}c l@{\hspace{5pt}} r r r r r r r r r@{}}
			\hline
			$m$ & Method & Term. & Feas. & \shortstack{Max aff.\\vio.} & \shortstack{Max comp.\\vio.} & \shortstack{Mean obj.\\improv. gap} & \shortstack{Near-best\\observed} & \shortstack{Mean samp.\\br. desc.} & \shortstack{Pass samp.\\br. desc.} & \shortstack{Mean time\\(s)} \\
			\hline
			400 & \TBTA{} & 10/10 & 10/10 & $2.30{\times}10^{-14}$ & $0$ & 0.0737 & 3/10 & $1.463{\times}10^{-4}$ & 10/10 & 23.06 (1.61) \\
			400 & MPECopt & 10/10 & 10/10 & $1.27{\times}10^{-12}$ & $4.84{\times}10^{-12}$ & 0.0715 & 5/10 & $2.839{\times}10^{-9}$ & 10/10 & 70.32 (6.28) \\
			400 & IPOPT--Scholtes & 0/10 & 1/10 & $1.90{\times}10^{-6}$ & $6.32{\times}10^{-4}$ & 0 & 1/10 & $6.624{\times}10^{-7}$ & 1/10 & 174.16 (3.23) \\
			400 & big-\(M\) Gurobi & 0/10 & 10/10 & $8.88{\times}10^{-16}$ & $0$ & 1 & 0/10 & $2.074{\times}10^{-1}$ & 0/10 & 170.18 (3.31) \\
			400 & DCA3-Gurobi & 10/10 & 10/10 & $1.71{\times}10^{-9}$ & $2.37{\times}10^{-11}$ & 0.0380 & 8/10 & $1.815{\times}10^{-2}$ & 0/10 & 39.87 (1.65) \\
			\hline
			800 & \TBTA{} & 10/10 & 10/10 & $8.29{\times}10^{-15}$ & $0$ & 0.0520 & 7/10 & $1.155{\times}10^{-4}$ & 10/10 & 65.80 (3.81) \\
			800 & MPECopt & 10/10 & 10/10 & $1.43{\times}10^{-12}$ & $5.03{\times}10^{-12}$ & 0.0442 & 9/10 & $7.054{\times}10^{-8}$ & 10/10 & 221.83 (15.82) \\
			800 & IPOPT--Scholtes & 0/10 & 1/10 & $1.65{\times}10^{-5}$ & $2.13{\times}10^{-3}$ & 0 & 1/10 & $9.192{\times}10^{-9}$ & 1/10 & 511.07 (32.63) \\
			800 & big-\(M\) Gurobi & 0/10 & 10/10 & $1.55{\times}10^{-15}$ & $0$ & 1 & 0/10 & $1.384{\times}10^{-1}$ & 0/10 & 500.44 (33.27) \\
			800 & DCA3-Gurobi & 9/10 & 9/10 & $1.33{\times}10^{-9}$ & $5.24{\times}10^{-2}$ & 0.6006 & 0/10 & $5.428{\times}10^{-2}$ & 0/10 & 388.36 (20.14) \\
			\hline
			1600 & \TBTA{} & 10/10 & 10/10 & $3.04{\times}10^{-14}$ & $0$ & 0.0109 & 9/10 & $6.518{\times}10^{-5}$ & 10/10 & 228.33 (45.25) \\
			1600 & MPECopt & 10/10 & 10/10 & $1.43{\times}10^{-12}$ & $4.19{\times}10^{-12}$ & 0.0480 & 7/10 & $1.267{\times}10^{-3}$ & 9/10 & 725.70 (89.71) \\
			1600 & IPOPT--Scholtes & 0/10 & 0/10 & $6.98{\times}10^{-2}$ & $9.91{\times}10^{-2}$ & -- & 0/10 & -- & 0/10 & 1841.15 (202.33) \\
			1600 & big-\(M\) Gurobi & 0/10 & 10/10 & $2.44{\times}10^{-15}$ & $0$ & 1 & 0/10 & $8.756{\times}10^{-2}$ & 0/10 & 1209.34 (1.08) \\
			1600 & DCA3-Gurobi & 0/10 & 0/10 & $9.12{\times}10^{-9}$ & $6.66{\times}10^{-1}$ & -- & 0/10 & -- & 0/10 & 1200.35 (0.08) \\
			\hline
		\end{tabular}
	}
\end{table}

At $m=400$, DCA3-Gurobi has the smallest mean objective-improvement gap and the
largest near-best-observed count among the methods that return feasible outputs
in all ten runs.  Its good objective values need not imply small sampled branch
descent, since the DCA3 convergence theory guarantees weak stationarity under
its assumptions, which is weaker than B-stationarity~\cite{LeThiNguyenPhamDinh2023DCLCC}.
Both \TBTA{} and MPECopt pass the sampled branch-descent
diagnostic in every run.  At $m=800$, MPECopt has the best
objective-improvement statistics, whereas \TBTA{} is more than three times
faster.  At $m=1600$, \TBTA{} has the smallest mean objective-improvement gap,
the largest near-best-observed count, and the shortest mean computation time
among the methods that consistently return feasible outputs.  The failure of
the global MIQP formulation to terminate is also consistent with the
combinatorial cost of resolving the complementarity decisions globally.  Thus, \TBTA{}
remains computationally efficient and provides the strongest observed
objective-improvement performance as the dimension increases.

\runinhead{\textbf{TBTA ablation experiment.}}

We compare bQPA with \TBTA{} to assess the practical effect of using the
linearized branch trust-region LPs in \TBTA{} instead of branch QPs.  At each
iteration, bQPA samples a compatible branch and solves its branch QP.
Appendix~\ref{app:bqpa} gives the pseudocode.  We test two branch-QP solvers.
IPOPT seeks a local KKT point, whereas Gurobi seeks and certifies a global
minimizer.  The two variants otherwise follow the same algorithm.  Unlike
\TBTA{}, both variants omit
objective linearization, the trust-region safeguard, and thresholding.
Therefore, this experiment assesses the combined effect of these three
components rather than providing a one-factor ablation.  All three methods are
tested on the ten nonconvex instances with $m=1600$.
Table~\ref{tab:tbta-ablation} reports the results.  Here the mean objective
improvement is the arithmetic mean of
$\Delta_M=f(x^0)-f(x_M)$ over the ten runs.

\begin{table}[H]
	\centering
	\caption{TBTA ablation on the ten bilevel-induced nonconvex QPCC instances
		with $m=1600$.}
	\label{tab:tbta-ablation}
	\scriptsize
	\setlength{\tabcolsep}{3pt}
	\resizebox{\textwidth}{!}{%
		\begin{tabular}{@{}l r r r r r r r@{}}
			\hline
			Method & Term. & Feas. & \shortstack{Max aff.\\vio.} &
			\shortstack{Max comp.\\vio.} &
			\shortstack{Mean objective\\improvement} &
			\shortstack{Mean samp.\\br. desc.} & \shortstack{Mean time\\(s)} \\
			\hline
			\TBTA{} & 10/10 & 10/10 & $3.041{\times}10^{-14}$ & $0$ &
			157.243 & $6.518{\times}10^{-5}$ & 228.33 (45.25) \\
			bQPA--IPOPT & 10/10 & 10/10 & $2.843{\times}10^{-13}$ & $0$ &
			151.523 & $3.297{\times}10^{-4}$ & 148.05 (7.46) \\
			bQPA--Gurobi & 10/10 & 10/10 & $4.736{\times}10^{-10}$ & $0$ &
			151.144 & $9.114{\times}10^{-4}$ & 403.74 (47.04) \\
			\hline
		\end{tabular}%
	}
\end{table}

All three methods satisfy their stopping criteria and return feasible outputs
on all ten instances.  \TBTA{} achieves the largest mean objective improvement
and the smallest mean sampled branch-descent value.  Its mean objective
improvement is approximately $3.8\%$ larger than that of bQPA--IPOPT and
$4.0\%$ larger than that of bQPA--Gurobi.  Although bQPA--IPOPT is faster than
\TBTA{} on average, bQPA--Gurobi is slower than \TBTA{} despite globally
minimizing each sampled branch QP.
Global optimality of an individual sampled branch QP does not imply a better
outer search; in these experiments, this stronger branch solve does not
improve the final objective relative to \TBTA{}.  Thus, on this test set, the
linearized, trust-region-safeguarded subproblems of \TBTA{} yield a larger mean
objective improvement and a smaller mean sampled branch-descent value, but no
uniform runtime advantage.

\subsubsection{Affine-GNE network QPCC instances}
\label{subsubsec:gne-qpcc}
\label{sec:gne-qpcc}

All instances use the affine-GNE complementarity system defined in
Section~\ref{subsubsec:gne-lpcc}.  The convex quadratic selection problem is
\[
\min_y\ \frac12(x-x^{\rm tar})\T
\operatorname{Diag}(\omega)(x-x^{\rm tar})+c\T x,
\qquad \omega\in\mathbb R_{++}^N,\quad 0\le x^{\rm tar}\le u.
\]
Here $x^{\rm tar}$ is a generated target vector, not a known equilibrium or
minimizer.  The components of $\omega$ are sampled independently and uniformly
from $[0.5,2]$, after which those associated with initially zero-flow users are
multiplied by $3$.  The nonconvex quadratic selection problem is
\[
\min_y\ \tfrac12(x-x^{\rm tar})\T\!\left(
\operatorname{Diag}(\omega)
-U\operatorname{Diag}\bigl((\omega_i+\beta)_{i\in I}\bigr)U\T\right)
(x-x^{\rm tar})+c\T x .
\]
Write $Q_x:=\operatorname{Diag}(\omega)$ for the convex $x$-block Hessian.
Here $I$ is the set of coordinates on which negative curvature is introduced,
$U=[e_i]_{i\in I}$, and $\beta>0$.
Because the big-\(M\) Gurobi reformulation fails to terminate within the
prescribed time limits at $m=800$ and $1600$ in the preceding bilevel
nonconvex QPCC experiment, we omit it from the substantially larger
affine-GNE QPCC experiments.

\runinhead{\textbf{Instance generation.}}
The first QPCC data set uses the convex quadratic tracking problem above, with
$N=5000$, $L=1984$, $m=2N+L=11984$, and standard-form dimension $23968$.
Table~\ref{tab:gne-qpcc-summary} reports the computational results for the
convex affine-GNE QPCC instances.

\begin{table}[H]
	\centering
	\caption{Performance on affine-GNE network QPCC instances with PSD tracking
		objectives, $N=5000$, and $m=11984$.}
	\label{tab:gne-qpcc-summary}
	\scriptsize
	\setlength{\tabcolsep}{2pt}
	\resizebox{\textwidth}{!}{%
		\begin{tabular}{@{}l@{\hspace{5pt}} r r r r r r r r r@{}}
			\hline
			Method & Term. & Feas. & \shortstack{Max aff.\\vio.} & \shortstack{Max comp.\\vio.} & \shortstack{Mean obj.\\improv. gap} & \shortstack{Near-best\\observed} & \shortstack{Mean samp.\\br. desc.} & \shortstack{Pass samp.\\br. desc.} & \shortstack{Mean time\\(s)} \\
			\hline
			\TBTA{} & 10/10 & 10/10 & $2.78{\times}10^{-9}$ &
			$0$ & $2.21{\times}10^{-8}$ & 10/10 & $5.78{\times}10^{-7}$ & 10/10 & 349.19 (23.07) \\
			IPOPT--Scholtes & 5/10 & 0/10 & $2.21{\times}10^{-9}$ &
			$8.01{\times}10^{-4}$ & -- & 0/10 & -- & 0/10 & 998.64 (77.81) \\
			MPECopt-early & 0/10 & 10/10 & $2.63{\times}10^{-9}$ &
			$0$ & $0$ & 10/10 & $5.81{\times}10^{-11}$ & 10/10 & 1198.03 (18.68) \\
			CCOpt & 0/10 & 0/10 & $6.64{\times}10^{-8}$ &
			$1.86{\times}10^{-2}$ & -- & 0/10 & -- & 0/10 & 1209.80 (0.32) \\
			LCQPow-OSQP & 0/10 & 0/10 & -- & -- & -- & 0/10 & -- & 0/10 & 1200.02 (0.00) \\
			\hline
		\end{tabular}
	}
\end{table}

Among the tested methods, only \TBTA{} and MPECopt-early consistently obtain
high-quality feasible outputs.  While their observed solution quality is
comparable, \TBTA{} terminates on all instances and has a substantial runtime
advantage, whereas MPECopt-early reaches the time limit before satisfying its
stopping criterion.

The second QPCC data set uses the nonconvex quadratic selection problem above
with the same network dimensions.  We set $|I|=r=10$ and
$\beta=0.05\lambda_{\max}(Q_x)$, so that the selected ten positive eigenvalues
are replaced by $-\beta$.  Table~\ref{tab:gne-qpcc-nonconvex-summary} reports
the results under a $1200$-second time limit.

\begin{table}[H]
	\centering
	\caption{Performance on affine-GNE network QPCC instances with nonconvex
		objectives, $N=5000$, and $m=11984$.}
	\label{tab:gne-qpcc-nonconvex-summary}
	\scriptsize
	\setlength{\tabcolsep}{2.6pt}
	\resizebox{\textwidth}{!}{%
		\begin{tabular}{@{}l@{\hspace{6pt}} r r r r r r r r r@{}}
			\hline
			Method & Term. & Feas. & \shortstack{Max aff.\\vio.} &
			\shortstack{Max comp.\\vio.} & \shortstack{Mean obj.\\improv. gap} &
			\shortstack{Near-best\\observed} & \shortstack{Mean samp.\\br. desc.} &
			\shortstack{Pass samp.\\br. desc.} & \shortstack{Mean time\\(s)} \\
			\hline
			\TBTA{} & 10/10 & 10/10 & $3.87{\times}10^{-8}$ & $0$ &
			$2.11{\times}10^{-6}$ & 10/10 & $7.17{\times}10^{-7}$ &
			10/10 & 420.23 (53.28) \\
			IPOPT--Scholtes & 2/10 & 1/10 & $4.27{\times}10^{-6}$ &
			$1.98{\times}10^{-4}$ & $0$ & 1/10 & $0$ & 1/10 &
			1189.61 (15.88) \\
			MPECopt-early & 0/10 & 10/10 & $3.51{\times}10^{-10}$ & $0$ &
			$8.12{\times}10^{-6}$ & 10/10 & $6.50{\times}10^{-6}$ &
			10/10 & 1223.63 (12.43) \\
			CCOpt & 0/10 & 0/10 & $3.34{\times}10^{-7}$ &
			$1.63{\times}10^{-2}$ & -- & 0/10 & -- & 0/10 & 1209.63 (0.34) \\
			DCA3-Gurobi & 0/10 & 0/10 & $3.43{\times}10^{-11}$ &
			$3.64{\times}10^{-1}$ & -- & 0/10 & -- & 0/10 & 1200.05 (0.01) \\
			\hline
		\end{tabular}%
	}
\end{table}

Again, \TBTA{} and MPECopt-early are the only methods that consistently return
feasible outputs classified as near-best-observed.  Every output from both
methods passes the sampled branch-descent diagnostic.  \TBTA{} is faster,
requiring $420.23$ seconds on average, compared with $1223.63$ seconds for
MPECopt-early.  The similar robustness of \TBTA{} on the convex and nonconvex
network objectives is consistent with its first-order construction: the
objective is linearized and every branch subproblem remains a trust-region LP,
so the low-rank negative-curvature modification does not change the subproblem
class.

\subsection{MPECopt embedding test on MacMPEC}
\label{subsec:macmpec}
\label{sec:macmpec-embedding}

This final experiment tests the proposed LPCC methods as LPCC solvers in the
MPECopt framework described in Section~\ref{subsubsec:methods-baselines},
using the MacMPEC collection~\cite{nurkanovic2025mpecopt,MacMPEC}.  The
original implementation of MPECopt solves each Phase-II LPCC through its big-\(M\)
MILP reformulation with Gurobi.  We compare it with three
replacements: \RBLA{} (Gurobi), \RBLA{} (A-Simplex), and
\RPSA{} (A-Simplex).  Here A-Simplex is the authors' implementation defined in
Section~\ref{subsubsec:methods-baselines}.  Thus, four Phase-II LPCC solvers
are embedded in the same MPECopt framework.

The MacMPEC page reports a reference value for 131 instances.  This experiment
contains 129 instances.  Two unsupported
instances are excluded: \texttt{ex9.1.2}, which contains a binary variable,
and \texttt{ralph1}, which has multiple objectives.  In the tables below, $m$
denotes the number of complementarity pairs in the extracted MPCC.

The computation time limit is 300 seconds for \RBLA{} (Gurobi),
\RPSA{} (A-Simplex), and MILP (Gurobi), and 600 seconds for
\RBLA{} (A-Simplex).  A run terminates when an outer
MPECopt iterate satisfies the numerical stationarity stopping criterion.  For
every terminated run, a direct AMPL residual check verified feasibility for
the original model.  Let $f_{\rm final}$ and
$f_{\rm ref}$ denote the final and database-reference objective values, and set
\[
\varepsilon_{\rm obj}=10^{-4}\max\{1,|f_{\rm ref}|\}.
\]
A terminated run is classified as \emph{match} if
$|f_{\rm final}-f_{\rm ref}|\le\varepsilon_{\rm obj}$, \emph{better} if
$f_{\rm final}<f_{\rm ref}-\varepsilon_{\rm obj}$, and \emph{worse} if
$f_{\rm final}>f_{\rm ref}+\varepsilon_{\rm obj}$.  A run is classified as a
\emph{time limit} if it does not terminate within its prescribed limit.
Table~\ref{tab:macmpec-summary} gives the final outcomes for the four embedded
subproblem solvers.

\begin{table}[H]
	\centering
	\caption{Final outcomes of the four Phase-II LPCC solver configurations on
		the 129 MacMPEC instances.}
	\label{tab:macmpec-summary}
	\resizebox{0.95\textwidth}{!}{%
		\begin{tabular}{lrrrrr}
			\hline
			Method & Match & Better & Worse &
			\shortstack{Time\\limit} & Infeasible \\
			\hline
			\RBLA{} (Gurobi) & 120 & 3 & 6 & 0 & 0 \\
			\RBLA{} (A-Simplex) & 111 & 2 & 6 & 10 & 0 \\
			\RPSA{} (A-Simplex) & 119 & 3 & 7 & 0 & 0 \\
			MILP (Gurobi) & 120 & 3 & 6 & 0 & 0 \\
			\hline
		\end{tabular}
	}
\end{table}

Figure~\ref{fig:macmpec-time-bestknown} plots the match-or-better ratio against
computation time for the same 129 instances.

\begin{figure}[htbp]
	\centering
	\begin{tikzpicture}
		\node[anchor=south west,inner sep=0] (macmpecplot) at (0,0) {%
			\includegraphics[width=0.65\textwidth]{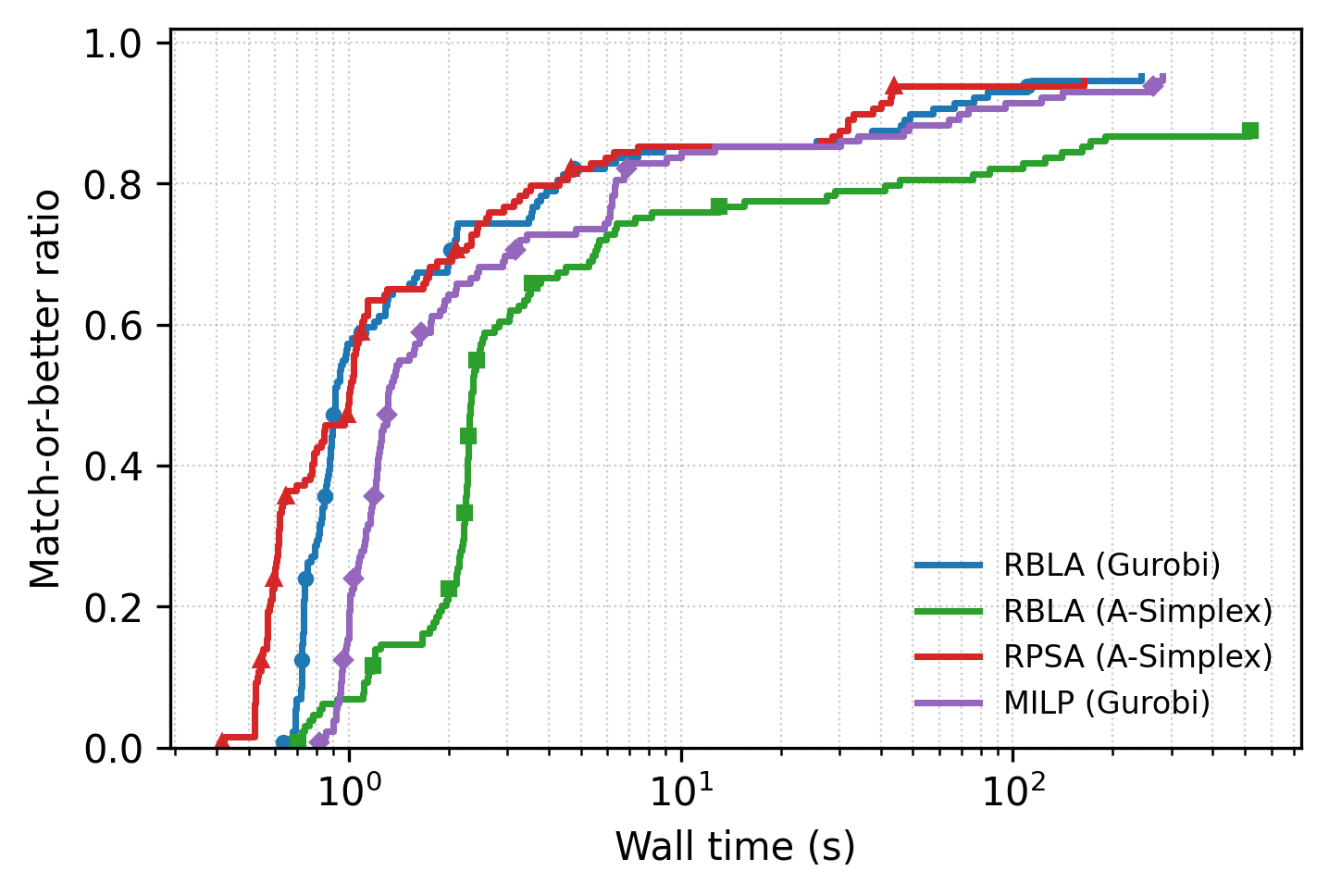}%
		};
		\begin{scope}[x={(macmpecplot.south east)},y={(macmpecplot.north west)}]
			\fill[white] (0.435,0.005) rectangle (0.67,0.085);
			\node[font=\sffamily\fontsize{8.8}{9.5}\selectfont] at (0.5525,0.045)
			{Computation time (s)};
		\end{scope}
	\end{tikzpicture}
	\caption{Match-or-better ratio versus computation time for the four embedded
		Phase-II LPCC solvers}
	\label{fig:macmpec-time-bestknown}
\end{figure}

Table~\ref{tab:macmpec-not-bestknown} lists the ten instances for which at
least one embedded Phase-II LPCC solver does not match the MacMPEC database
objective value.  The \RBLA{} (A-Simplex) column is omitted because ten
of its runs reach the time limit.

\begin{table}[htbp]
	\centering
	\caption{Objective values on the ten nonmatching MacMPEC instances.}
	\label{tab:macmpec-not-bestknown}
	\resizebox{\textwidth}{!}{%
		\begin{tabular}{l r r r r r}
			\hline
			Instance & $m$ & \RBLA{} (Gurobi) & \RPSA{} (A-Simplex) & MILP (Gurobi) & MacMPEC \\
			\hline
			\texttt{b-pn2\_\_bem-milanc30-s} & 1464 & 0.129976 & 0.129976 & 0.129976 & 83.2825 \\
			\texttt{bilevel1} & 6 & 5 & 5 & 5 & 0 \\
			\texttt{bilin} & 6 & 16 & 16 & 16 & 18.4 \\
			\texttt{ex9.2.3} & 6 & 5 & 5 & 5 & -55 \\
			\texttt{ex9.2.5} & 3 & 9 & 9 & 9 & 6 \\
			\texttt{hs044-i} & 10 & 17.0901 & 17.0901 & 17.0901 & 15.6178 \\
			\texttt{pack-comp1\_\_pack-comp-32} & 961 & 0.652985 & 0.653405 & 0.652985 & 0.652979 \\
			\texttt{tap-09\_\_tap-09} & 36 & 110.231 & 110.231 & 110.231 & 109.143 \\
			\texttt{tap-15\_\_tap-15} & 99 & 185.319 & 185.319 & 185.319 & 184.295 \\
			\texttt{water-net\_\_water-net} & 14 & 927.264 & 927.264 & 927.264 & 929.169 \\
			\hline
		\end{tabular}
	}
\end{table}

Table~\ref{tab:macmpec-summary} and Figure~\ref{fig:macmpec-time-bestknown} show
that \RBLA{} (Gurobi) is faster than \RBLA{} (A-Simplex).  The
latter reaches the 600-second limit on ten instances, whereas the longest runs
of the other three methods are all below 282 seconds.  Thus, the current
A-Simplex implementation is not as efficient at solving LPs as mature solvers
such as Gurobi.  By contrast,
Figure~\ref{fig:macmpec-time-bestknown} shows that \RPSA{} (A-Simplex) is
nearly the fastest method, indicating the benefit
of advancing the simplex path without solving every sampled LP to completion.
\RPSA{} records 119 matches, one fewer than the 120 matches obtained by
\RBLA{} (Gurobi) and MILP (Gurobi).  The difference occurs on
\texttt{pack-comp1\_\_}\allowbreak\texttt{pack-comp-32}, where the \RPSA{}
objective differs from
the database reference by approximately $4.3\times10^{-4}$, which is of the
same order as the $10^{-4}$ reporting tolerance.  Thus, \RPSA{} also provides
solution quality comparable to the Gurobi-based variants.

\section{Conclusion}
\label{sec:conclusion}

This paper developed randomized branch-based methods for computing
B-stationary points of LPCCs and QPCCs using only LP subproblems.  The
finite-union geometry of the feasible set provides a branchwise
characterization of B-stationarity.  Based on this characterization,
\RBLA{} solves sampled branch LPs exactly, \RPSA{} permits early acceptance
along simplex paths, and \TBTA{} replaces branch QPs by linearized
trust-region LPs with thresholded branch exploration.  Under the stated
assumptions, the LPCC methods stabilize almost surely at B-stationary points.
For \TBTA{}, the point returned upon finite termination is B-stationary, and
every accumulation point of an infinite run is B-stationary almost surely.

The numerical experiments support the computational potential of this
LP-based branch strategy.  On bilevel-induced, inverse-QP-induced, and sparse
affine generalized Nash equilibrium instances, the proposed methods often achieved objective reductions
comparable to the best feasible results observed within the time limits.
They also performed well on large sparse systems with up to \(11{,}984\)
complementarity pairs.  When embedded as Phase-II LPCC solvers in MPECopt,
\RBLA{} and \RPSA{} attained solution quality and runtimes comparable to those
of the original Gurobi-based big-\(M\) MILP solver on the 129 tested MacMPEC
instances.

\clearpage

\appendix

\section{In-house bounded-variable revised simplex implementation}
\label{app:inhouse-simplex}

We describe the dense implementation used to solve the ordinary LPs arising
inside \RBLA{} and \RPSA{}.  After inequalities are converted to equalities
and free variables are split, the solver works with
\[
    \min_y\ \widetilde c\T y
    \quad\text{s.t.}\quad
    My=h,\qquad \widetilde\ell\le y\le\widetilde u.
\]
If $M$ has $p$ rows, a basis is an ordered set $\mathcal B$ of $p$ columns
such that $B:=M_{\mathcal B}$ is nonsingular.  Every nonbasic variable
$j\in\mathcal N$ is fixed at either $\widetilde\ell_j$ or
$\widetilde u_j$; we call this its \emph{nonbasic side}.  The basic variables
are then determined by the equality system.  The sparse implementation uses
the same pricing, direction, and ratio-test logic, but requires an externally
supplied feasible crash basis and uses sparse basis factorizations; it is not
included in the pseudocode below.

\begin{breakablealgorithm}
\caption{Dense in-house bounded-variable revised simplex}
\label{alg:inhouse-simplex}
\begin{algorithmic}[1]
\Require An LP
\[
    \min_x\ c\T x\quad\text{s.t.}\quad
    A_{\rm eq}x=b_{\rm eq},\quad A_{\rm in}x\ge b_{\rm in},
    \quad \ell\le x\le u,
\]
an optional feasible state $(\mathcal B,\nu)$, where
$\nu_j\in\{L,U\}$ records the bound of each $j\in\mathcal N$,
$\mathcal R\in\{\text{Dantzig},\text{Bland}\}$, and
$\mathsf{mode}\in\{\text{complete},\text{one-pivot}\}$
\Ensure A current point, its objective value and status, and $(\mathcal B,\nu)$
\State Set $A_{\rm in}x-s=b_{\rm in}$, $s\ge0$, replace each free $x_j$ by
       $x_j^+-x_j^-$ with $x_j^\pm\ge0$, and obtain
       $(M,h,\widetilde c,\widetilde\ell,\widetilde u)$ with
       $M\in\mathbb R^{p\times\bar n}$
\If{$M_{\mathcal B}$ is nonsingular and $(\mathcal B,\nu)$ defines a
      feasible point}
    \State Restore $(\mathcal B,\nu)$
\Else
    \State Set $y_j^0=\widetilde\ell_j$ if
           $\widetilde\ell_j> -\infty$, and
           $y_j^0=\widetilde u_j$ otherwise; set $v=h-My^0$
    \State Set $S=\operatorname{Diag}(s_1,\ldots,s_p)$, where
           $s_i=1$ if $v_i\ge0$ and $s_i=-1$ otherwise, and compute
           \[
             \phi_{\rm I}^\star:=
             \min_{y,\,a\in\mathbb R^p}\ \mathbf 1\T a
             \quad\text{s.t.}\quad
             My+Sa=h,\quad
             \widetilde\ell\le y\le\widetilde u,\quad a\ge0,
           \]
           initialized by $(y,a)=(y^0,|v|)$ and basis matrix $S$
    \If{$\phi_{\rm I}^\star>10^{-7}$}
        \State \Return \textsc{infeasible}
    \EndIf
    \State Delete $a$ and retain the resulting feasible $(\mathcal B,\nu)$
\EndIf
\Loop
    \State Set $\mathcal N=\{1,\ldots,\bar n\}\setminus\mathcal B$ and
           $y_j=\widetilde\ell_j$ if $\nu_j=L$, or
           $y_j=\widetilde u_j$ if $\nu_j=U$
    \State Set $B=M_{\mathcal B}$ and solve
           $B y_{\mathcal B}=h-M_{\mathcal N}y_{\mathcal N}$
    \State Solve $B\T\pi=\widetilde c_{\mathcal B}$ and, for
           $j\in\mathcal N$, compute
           $r_j=\widetilde c_j-M_j\T\pi$
    \State Set
           $\mu_j=r_j$ if $\nu_j=L$, and $\mu_j=-r_j$ if $\nu_j=U$
    \If{$\mu_j\ge-10^{-9}$ for every $j\in\mathcal N$}
        \State \Return \textsc{optimal} with the current point and basis
    \EndIf
    \State Let $I=\{j\in\mathcal N:\mu_j<-10^{-9}\}$ and choose
           $q=\min\arg\min_{j\in I}\mu_j$ if $\mathcal R=\text{Dantzig}$,
           or $q=\min I$ if $\mathcal R=\text{Bland}$
    \State Set $\sigma=1$ if $\nu_q=L$, and $\sigma=-1$ if $\nu_q=U$
    \State Solve $B d_{\mathcal B}=-\sigma M_q$
    \State Set
           $\theta_q=\widetilde u_q-y_q$ if $\sigma=1$, and
           $\theta_q=y_q-\widetilde\ell_q$ if $\sigma=-1$
    \For{$i\in\mathcal B$}
        \State Set
        $\displaystyle
        \theta_i=\begin{cases}
        (\widetilde u_i-y_i)/d_i,&d_i>10^{-9},\\
        (\widetilde\ell_i-y_i)/d_i,&d_i<-10^{-9},\\
        +\infty,&\text{otherwise},
        \end{cases}$
    \EndFor
    \State Set $\theta=\min_{j\in\{q\}\cup\mathcal B}\theta_j$ and choose
           $e\in\arg\min_{j\in\{q\}\cup\mathcal B}\theta_j$
    \If{$\theta=+\infty$}
        \State \Return \textsc{unbounded}
    \EndIf
    \State Update $y_q\gets y_q+\sigma\theta$ and
           $y_{\mathcal B}\gets y_{\mathcal B}+\theta d_{\mathcal B}$
    \If{$e=q$}
        \State Set $\nu_q\gets U$ if $\nu_q=L$, and $\nu_q\gets L$ otherwise
    \Else
        \State Replace $e$ by $q$ in $\mathcal B$ and set
               $\nu_e=U$ if $d_e>0$, or $\nu_e=L$ if $d_e<0$; delete $\nu_q$
    \EndIf
    \State Save $(\mathcal B,\nu)$
    \If{$\mathsf{mode}=\text{one-pivot}$}
        \State \Return the current point, objective value, and basis state
    \EndIf
\EndLoop
\end{algorithmic}
\end{breakablealgorithm}

\clearpage

\section{Generation of bilevel-induced LPCC and QPCC instances}
\label{app:bilevel-instance-generation}

\begin{breakablealgorithm}
\caption{Generation of the bilevel-induced LPCC instances}
\label{alg:bilevel-lpcc-generation}
\begin{algorithmic}[1]
\Require $m\in\{400,800,1600\}$, instance index $t\in\{0,\ldots,9\}$,
         and $n_x=n_y=400$
\Ensure LPCC data and a feasible point $z^0$
\State Set $n=n_x+n_y+2m$ and initialize the random-number generator with
       seed $20260606+100000m+t$
\State Sample $x^0\in[-0.2,0.2]^{n_x}$ and
       $y^0\in[-0.2,0.2]^{n_y}$ componentwise uniformly
\State Set $m_{\rm act}=0.6m$ and initialize $G,E,g,\lambda^0,w^0$ to zero
\For{$j=0,\ldots,m_{\rm act}/3-1$}
    \State Set $K_j=\{3j+1,3j+2,3j+3\}$
    \State Sample $\bar G_j\sim\mathcal N(0,n_y^{-1}I)$ and
           $\bar E_j\sim\mathcal N(0,n_x^{-1}I)$
    \For{$i\in K_j$}
        \State Set $G_i=\bar G_j$, $E_i=\bar E_j$, and
               $g_i=-G_i y^0-E_i x^0$
        \State Sample $\lambda_i^0$ uniformly from $[0.5,2]$ and set $w_i^0=0$
    \EndFor
\EndFor
\For{$i=m_{\rm act}+1,\ldots,m$}
    \State Sample $G_i\sim\mathcal N(0,n_y^{-1}I)$ and
           $E_i\sim\mathcal N(0,n_x^{-1}I)$
    \State Sample $s_i$ uniformly from $[0.5,2]$
    \State Set $g_i=s_i-G_i y^0-E_i x^0$, $\lambda_i^0=0$, and $w_i^0=s_i$
\EndFor
\State Sample $R_{kl}\sim\mathcal N(0,n_y^{-1})$ and set
       $H=R\T R+10^{-4}I$
\State Sample $N_{kl}\sim\mathcal N(0,n_x^{-1})$ and set
       $h=G\T\lambda^0-Hy^0-Nx^0$
\State Set $z^0=(x^0,y^0,\lambda^0,w^0)$,
       $-1\le x,y\le1$, and $0\le\lambda,w\le10$
\State Sample $c_i\sim\mathcal N(0,n^{-1})$ for $i=1,\ldots,n$
\State Form the computational LPCC from
       \eqref{prob:bilevel-generated-lpcc} using $c\T z$ and the bounds above,
       and return it with $z^0$
\end{algorithmic}
\end{breakablealgorithm}

The QPCC generators use the same construction of the lower-level equations
and the feasible point, but they also differ from
Algorithm~\ref{alg:bilevel-lpcc-generation} before the objective is generated.
They use
$n_x=n_y=200$, duplicate $40\%$ of the lower-level rows in groups of size two,
bound $\lambda$ and $w$ by $2$, and sample inactive slacks from $[0.1,1]$.
For each active pair, a total multiplier from $[0.1,1]$ is divided equally
between the two copies, whereas the LPCC generator samples the three
multipliers independently.  Thus the pseudocode can be reused as a structural
template after these parameter and multiplier changes; the quadratic objective
is then generated separately.

\clearpage

\section{Generation of Affine-GNE Complementarity Constraints}
\label{app:gne-constraint-generation}

\begin{breakablealgorithm}
\caption{Generation of the affine-GNE complementarity constraints}
\label{alg:gne-constraint-generation}
\begin{algorithmic}[1]
\Require For LPCC, $N=4008$, $K=160$, $\gamma_{\rm act}=0.25$,
a matrix $A\in\{0,1\}^{L\times N}$ satisfying
$A_{\mathord{:},K+i}=A_{\mathord{:},i}$ for $i=1,\ldots,K$, a random seed,
$p_0=0.18$, and $p_\lambda=0.5$
\Ensure $q,M$, finite bounds, and a feasible complementary pair $(z^0,w^0)$
\State Set $P_+=\{1,\ldots,K\}$ and $P_0=\{K+1,\ldots,2K\}$
\State Independently sample
       $\rho_\ell\sim\mathcal U[0.01,0.08]$ for
       $\ell=1,\ldots,L$
\State Independently of $\rho$, sample
       $a_i\sim\mathcal U[0.05,0.2]$ and
       $u_i\sim\mathcal U[0.8,1.5]$ for
       $i=1,\ldots,N$; then set
       $a_i=0$ for $i\in P_+\cup P_0$
\State Sample $\delta_i\sim\operatorname{Bernoulli}(1-p_0)$ independently
       for $i=1,\ldots,N$; then set $\delta_i=1$ for $i\in P_+$ and
       $\delta_i=0$ for $i\in P_0$
\State For each $i$ with $\delta_i=1$, sample
       $r_i\sim\mathcal U[0.25,0.75]$ and set $x_i^0=u_i r_i$; set
       $x_i^0=0$ when $\delta_i=0$
\State Set $v=Ax^0$ and $\mathcal U=\{\ell:v_\ell>10^{-12}\}$; choose
       $\mathcal A\subseteq\mathcal U$ uniformly subject to
       $|\mathcal A|=\lfloor\gamma_{\rm act}|\mathcal U|\rceil$
\State Sample $s_\ell\sim\mathcal U[0.2,1]$ independently for
       $\ell=1,\ldots,L$; set $s_\ell=0$ for $\ell\in\mathcal A$ and
       $d=v+s$
\State Choose $\mathcal P\subseteq\mathcal A$ uniformly subject to
       $|\mathcal P|=\lfloor p_\lambda|\mathcal A|\rceil$; sample
       $\lambda_\ell^0\sim\mathcal U[0.05,1]$ for $\ell\in\mathcal P$ and
       set $\lambda_\ell^0=0$ otherwise
\State Set
       $H=A\T\operatorname{Diag}(\rho)A+\operatorname{Diag}(a)$,
       $\alpha^0=0$, and $b=-Hx^0-A\T\lambda^0$; hence
       $\mu^0:=Hx^0+b+\alpha^0+A\T\lambda^0=0$
\State Set
\[
z^0=\begin{bmatrix}x^0\\\alpha^0\\\lambda^0\end{bmatrix},\quad
w^0=\begin{bmatrix}\mu^0\\u-x^0\\d-Ax^0\end{bmatrix},\quad
q=\begin{bmatrix}b\\u\\d\end{bmatrix},\quad
M=\begin{bmatrix}H&I_N&A\T\\-I_N&0&0\\-A&0&0\end{bmatrix}.
\]
\State Set the componentwise bounds
       $0\le x\le u$, $0\le\alpha\le10$, $0\le\lambda\le10$,
       $0\le u-x\le u$, and $0\le d-Ax\le d$
\State For $i=1,\ldots,N$, set
       $\overline\mu_i=\max\{10,|b_i|+(Hu)_i
       +10(A\T\mathbf1_L)_i\}$ and impose $0\le\mu\le\overline\mu$
\State Return $q,M$, the bounds, and $(z^0,w^0)$
\end{algorithmic}
\end{breakablealgorithm}

Here $\lfloor\cdot\rceil$ denotes nearest-integer rounding, with half-integers
rounded to the nearest even integer.  For the QPCC instances, replace
$N=4008$, $K=160$, and $\gamma_{\rm act}=0.25$ by $N=5000$, $K=800$, and
$\gamma_{\rm act}=0.20$, respectively; all other parameters and steps remain
unchanged.

\clearpage

\section{Branch QP algorithm for the ablation experiment}
\label{app:bqpa}

Algorithm~\ref{alg:bqpa} states the bQPA used in the ablation experiment.
The solver call returns either a local KKT point (IPOPT) or a globally optimal
solution (Gurobi) of the sampled fixed-branch QP.  In both variants, a finite
trial point satisfying the numerical feasibility criteria in
Section~\ref{subsubsec:metrics} is retained as an incumbent even if the outer
wall-time limit is reached during the solver call.

\begin{breakablealgorithm}
\caption{Branch quadratic programming algorithm (bQPA)}
\label{alg:bqpa}
\begin{algorithmic}[1]
\Require feasible $x^0\in\cOmega$, branch-QP solver $\mathcal Q$,
         tolerances $\varepsilon_x,\varepsilon_f>0$, patience $p$, and an
         outer wall-time limit
\Ensure a best feasible incumbent $x^{\rm best}$
\State Set $x^{\rm best}=x^0$ and $f^{\rm best}=f(x^0)$; set $c=0$
\For{$k=0,1,2,\ldots$}
    \State Choose $J^k$ conditionally uniformly from $\cP(x^k)$
    \State Call $\mathcal Q$ from $x^k$ on
           $\min\{f(x):x\in\cOmega_{J^k}\}$, obtaining a trial point
           $\widehat x^{k+1}$ and a solver status
    \If{$\widehat x^{k+1}$ is finite, satisfies the numerical feasibility
          criteria in Section~\ref{subsubsec:metrics}, and
          $f(\widehat x^{k+1})<f^{\rm best}$}
        \State Set $x^{\rm best}=\widehat x^{k+1}$ and
               $f^{\rm best}=f(\widehat x^{k+1})$
    \EndIf
    \If{the solver status is unsuccessful, or the outer time limit is reached}
        \State \Return $x^{\rm best}$
    \EndIf
    \State Set $x^{k+1}=\widehat x^{k+1}$
    \If{$\|x^{k+1}-x^k\|_\infty
          \le\varepsilon_x(1+\|x^k\|_\infty)$ and
          $|f(x^{k+1})-f(x^k)|
          \le\varepsilon_f(1+|f(x^k)|)$}
        \State Set $c=c+1$
    \Else
        \State Set $c=0$
    \EndIf
    \If{$c=p$}
        \State \Return $x^{\rm best}$
    \EndIf
\EndFor
\end{algorithmic}
\end{breakablealgorithm}
\clearpage

\section{Published Results for the Nonmatching MacMPEC Instances}
\label{app:macmpec-notes}

Table~\ref{tab:macmpec-not-bestknown} lists the ten instances for which at
least one embedded solver does not match the MacMPEC database reference value.
For comparison, Table~\ref{tab:macmpec-literature-results} summarizes results
reported for these instances in earlier studies.  These results are provided
for context only, since the formulations, sign conventions, feasibility
tolerances, and solver settings may differ across studies.

\begin{table}[htbp]
	\centering
	\caption{Results reported in earlier studies for the nonmatching instances
		in Table~\ref{tab:macmpec-not-bestknown}.}
	\label{tab:macmpec-literature-results}
	\small
	\begin{tabular}{p{0.32\textwidth}p{0.61\textwidth}}
		\hline
		Instance & Results reported in earlier studies \\
		\hline
		\texttt{b-pn2\_\_}\allowbreak
		\texttt{bem-milanc30-s}
		& $0.09$ and $1020.93$ \cite{hatz2013lifting}; iteration limit
		\cite{raghunathan2005interior}. \\

		\texttt{bilevel1}
		& $5.0$
		\cite{demiguel2005twosided,raghunathan2005interior,kanzow2013regularization}. \\

		\texttt{bilin}
		& $16.0$ \cite{kanzow2013regularization}. \\

		\texttt{ex9.2.3}
		& Approximately $5.0$
		\cite{demiguel2005twosided,raghunathan2005interior,hatz2013lifting,kanzow2013regularization}. \\

		\texttt{ex9.2.5}
		& $9.0$ \cite{hatz2013lifting,demiguel2005twosided,hall2024lcqpow}. \\

		\texttt{hs044-i}
		& Approximately $17.0901$ \cite{hatz2013lifting}. \\

		\texttt{pack-comp1\_\_}\allowbreak
		\texttt{pack-comp-32}
		& $0.653089$ \cite{raghunathan2005interior}. \\

		\texttt{tap-09\_\_tap-09}
		& $121.566$ \cite{kanzow2013regularization}; restoration failure
		\cite{raghunathan2005interior}. \\

		\texttt{tap-15\_\_tap-15}
		& Infeasible \cite{hatz2013lifting}; $187.258$ and $253.404$ in
		\cite{kanzow2013regularization} and
		\cite{raghunathan2005interior}, respectively. \\

		\texttt{water-net\_\_water-net}
		& $927.138$ with maximum constraint violation $7.93\times10^{-3}$
		\cite{kanzow2013regularization}; iteration limit
		\cite{raghunathan2005interior}. \\
		\hline
	\end{tabular}
\end{table}

\clearpage

\bibliographystyle{plainnat}
\bibliography{bib}

\end{document}